\newcommand{\R}{{\Bbb R}}
\newcommand{\C}{{\Bbb C}}
\newcommand{\D}{{\Bbb D}}
\newcommand{\diag}{\text{\upshape diag\,}}
\newcommand{\re}{\text{\upshape Re\,}}
\newcommand{\im}{\text{\upshape Im\,}}
\newcommand{\ntlim}{\lim^\angle}
\newtheorem{theorem}{Theorem}[section]
\newtheorem{lemma}[theorem]{Lemma}
\newtheorem{remark}[theorem]{Remark}
\newtheorem{RHproblem}[theorem]{RH problem}
\newtheorem{figuretext}{Figure}
\numberwithin{equation}{section}
\tikzset{middlearrow/.style={
			decoration={markings,
				mark= at position 0.6 with {\arrow{#1}} ,
			},
			postaction={decorate}
		}
	}
\tikzset{->-/.style={decoration={
				markings,
				mark=at position #1 with {\arrow{latex}}},postaction={decorate}}}
\tikzset{-<-/.style={decoration={
				markings,
				mark=at position #1 with {\arrowreversed{latex}}},postaction={decorate}}}
				\tikzset{
	master/.style={
		execute at end picture={
			\coordinate (lower right) at (current bounding box.south east);
			\coordinate (upper left) at (current bounding box.north west);
		}
	},
	slave/.style={
		execute at end picture={
			\pgfresetboundingbox
			\path (upper left) rectangle (lower right);
		}
	}
}
\tikzset{cross/.style={cross out, draw, 
         minimum size=2*(#1-\pgflinewidth), 
         inner sep=0pt, outer sep=0pt}}
\def\XXint#1#2#3{{\setbox0=\hbox{$#1{#2#3}{\int}$ }
\vcenter{\hbox{$#2#3$ }}\kern-.59\wd0}}
\title[Boussinesq's equation: asymptotics in Sector I]
{Boussinesq's equation for water waves: \\ asymptotics in Sector I}
\author{C. Charlier$^{1}$ and J. Lenells$^{2}$}
\address{$^{1}$Centre for Mathematical Sciences, Lund University,
22100 Lund, Sweden. \\
$^{2}$Department of Mathematics, KTH Royal Institute of Technology, \\
10044 Stockholm, Sweden.}
\email{christophe.charlier@math.lu.se}
\email{jlenells@kth.se}
\begin{document}

\begin{abstract}
\noindent
In a recent paper, we showed that the large $(x,t)$ behavior of a class of physically relevant solutions of Boussinesq's equation for water waves is described by ten main asymptotic sectors. In the sector adjacent to the positive $x$-axis, referred to as Sector I, we stated without proof an exact expression for the leading asymptotic term together with an error estimate. Here we provide a proof of this asymptotic formula. 
\end{abstract}

\maketitle

\noindent
{\small{\sc AMS Subject Classification (2020)}: 34E05, 35G25, 35Q15, 37K15, 76B15.}

\noindent
{\small{\sc Keywords}: Asymptotics, Boussinesq equation, Riemann-Hilbert problem, initial value problem.}


\section{Introduction}
In a recent paper \cite{CLmain}, we developed an inverse scattering approach to the Boussinesq \cite{B1872} equation
\begin{align}\label{boussinesq}
u_{tt} = u_{xx} + (u^2)_{xx} + u_{xxxx}.
\end{align}
This approach yields an expression for the solution $u(x,t)$ in terms of the solution $n(x,t,k)$ of a $1\times 3$-row-vector Riemann--Hilbert (RH) problem, and by performing a Deift–Zhou steepest descent analysis of this RH problem, it is possible to derive asymptotic formulas for the large $(x,t)$ behavior of $u(x,t)$. In \cite{CLmain}, we identified in this way ten main sectors in the half-plane $t \geq 0$ describing the asymptotics of $u(x,t)$ for a class of physically relevant initial data (see Figure \ref{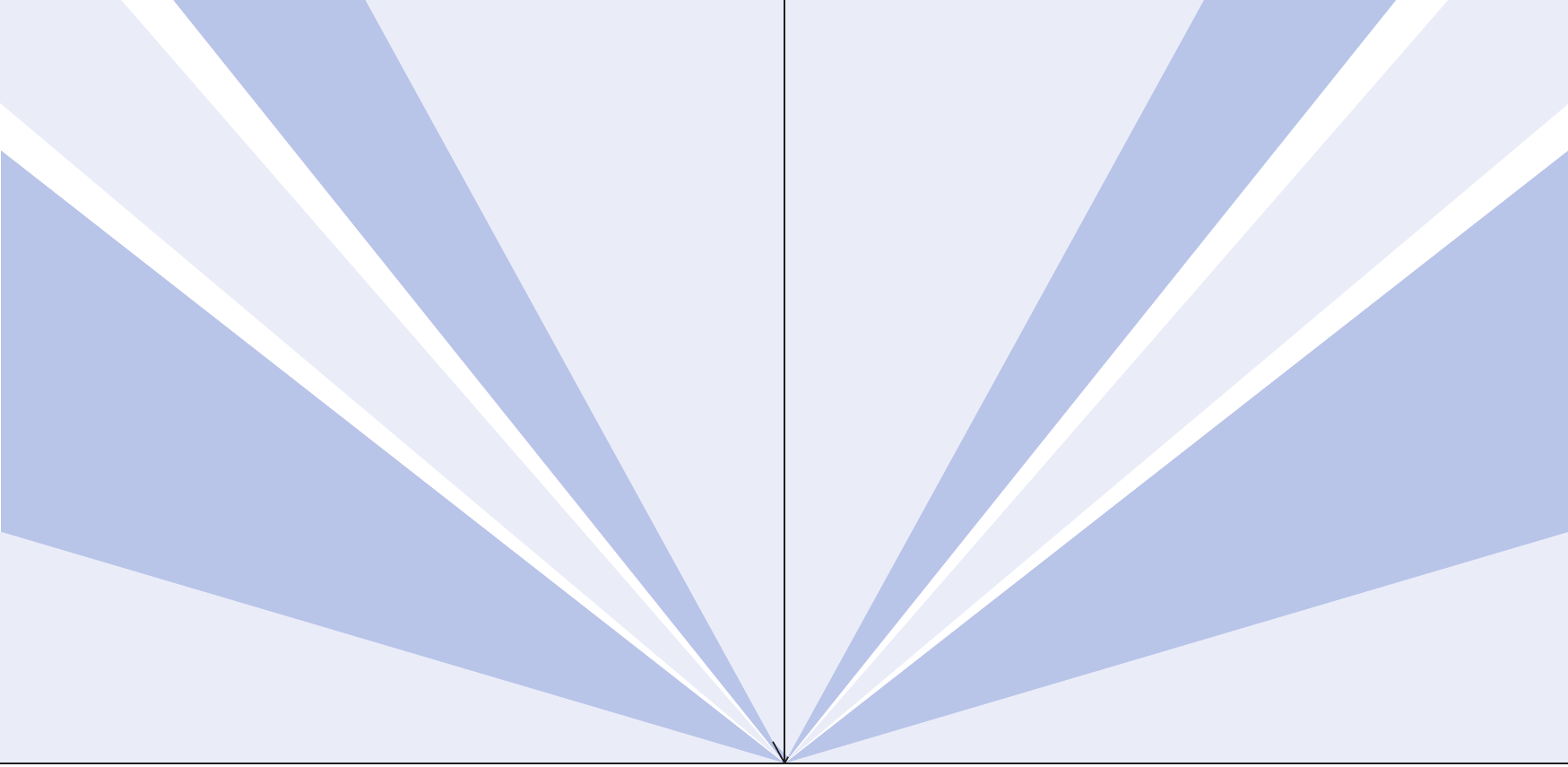}), and in each sector we provided an asymptotic formula for the solution. For conciseness, the formula in the sector adjacent to the positive $x$-axis, referred to as Sector I in \cite{CLmain}, was stated without proof. Here we provide a proof of this formula.

Equation (\ref{boussinesq}) first appeared in a paper by Boussinesq from 1872 \cite{B1872}, where it was derived as a model for long small-amplitude waves in a channel. About a century later, Hirota discovered that (\ref{boussinesq}) supports soliton solutions \cite{H1973}, and soon afterwards Zakharov \cite{Z1974} constructed a Lax pair for (\ref{boussinesq}), thus demonstrating its formal integrability. In the early 1980's, Deift, Tomei, and Trubowitz \cite{DTT1982} developed an inverse scattering formalism for an equation closely related to the Boussinesq equation (more precisely, for the equation obtained from (\ref{boussinesq}) by deleting the $u_{xx}$-term). However, the problem of developing an inverse scattering transform formalism for (\ref{boussinesq}) is substantially more complicated, because (\ref{boussinesq}) involves a spectral problem with coefficients that do not decay to zero at infinity. This introduces new jumps on the unit circle in the RH problem, and it turns out that all the main asymptotic contributions to $u(x,t)$ as $(x,t) \to \infty$ originate from these new jumps. In particular, as shown below, the main contributions to the asymptotic behavior of $u(x,t)$ in Sector I stem from a global parametrix which is used to cancel jumps on certain parts of the unit circle, as well as from six saddle points on the unit circle.

\begin{figure}
\bigskip\begin{center}
\begin{overpic}[width=.6\textwidth]{sectors.pdf}
      \put(102,-.3){\footnotesize $x$}
      \put(49.7,50.8){\footnotesize $t$}
      \put(90,4){\footnotesize I}
      \put(90,21){\footnotesize II}
      \put(86,36){\footnotesize III}
      \put(76,41){\footnotesize IV}
      \put(60.5,41){\footnotesize V}
      \put(37,41){\footnotesize VI}
      \put(19,41){\footnotesize VII}
      \put(9,36){\footnotesize VIII}
      \put(8,21){\footnotesize IX}
      \put(8,4){\footnotesize X}
    \end{overpic}
     \begin{figuretext}\label{sectors.pdf}
       The ten asymptotic sectors for (\ref{boussinesq}) in the $xt$-plane.
     \end{figuretext}
     \end{center}
\end{figure}

To describe the large $(x,t)$ asymptotics of $u(x,t)$, it is sufficient to consider Sectors I--V because Sectors IV--X are related to Sectors I--V by symmetry.
The derivation of the asymptotic formula in Sector I presented here features a number of differences compared to the derivations in Sectors II--V. For example, in Sector I, $x$ plays the role of the large parameter instead of $t$, because $t$ is not uniformly large as $(x,t)\to \infty$ in Sector I. 
The analysis needed for Sector I is also complicated by the fact that as $(x,t)$ approaches the $x$-axis, the six saddle points relevant for the analysis merge with the six points $\{ie^{\pi ij /3}\}_{j=0}^5$, which are all intersection points for the jump contour of the RH problem. This makes the local analysis near the saddle points more involved. It also forces us to perform an additional transformation of the RH problem (see Section \ref{nton1sec}) and for this purpose additional analytic approximations of the reflection coefficients $r_1$ and $r_2$ are required on the six rays that start at the origin and pass through $\{ie^{\pi ij /3}\}_{j=0}^5$. Since another set of analytic approximations of $r_1$ and $r_2$ is needed on the unit circle, this leads to the curious situation that we have two different approximations of the same functions on overlapping domains. Nevertheless, thanks to certain nonlinear relations obeyed by $r_1$ and $r_2$ (see (\ref{r1r2 relation on the unit circle}) and (\ref{r1r2 relation with kbar symmetry})), the relevant new jumps can still be estimated (see Lemma \ref{v1av4asmalllemma}).


\section{Main result}\label{section:main results}
Let $u_0(x)$ and $u_1(x)$ be real-valued functions in the Schwartz class $\mathcal{S}(\R)$ and consider the initial value problem for (\ref{boussinesq}) on the line with initial conditions
\begin{align}\label{initialdata}
u(x,0) = u_{0}(x), \qquad u_{t}(x,0) = u_{1}(x) \qquad \text{for $x \in \R$}.
\end{align}

\subsection{Assumptions}
Our results will be valid under the same asumptions on $u_0$ and $u_1$ as in \cite[Theorem 2.14]{CLmain}. To formulate these assumptions, we define two scattering matrices $s(k)$ and $s^A(k)$ by
\begin{align*}
& s(k) = I - \int_\R e^{-x \mathcal{L}(k)}(\mathsf{U}X)(x,k)e^{x \mathcal{L}(k)}dx, & & s^A(k) = I + \int_\R e^{x \mathcal{L}(k)}(\mathsf{U}^T X^A)(x,k)e^{-x \mathcal{L}(k)}dx,
\end{align*}
where the following notation is used:
\begin{enumerate}[$\bullet$]
\item $\mathcal{L} = \diag(l_1 , l_2 , l_3)$ where the functions $\{l_j(k)\}_{j=1}^3$ are defined by
\begin{align}\label{ljdef}
& l_{j}(k) = i \frac{\omega^{j}k + (\omega^{j}k)^{-1}}{2\sqrt{3}}, \qquad k \in \C\setminus \{0\},
\end{align}
with $\omega := e^{\frac{2\pi i}{3}}$. 

\item $\mathsf{U}(x,k)$ is given by
\begin{align*}
\mathsf{U}(x,k) = P(k)^{-1} \begin{pmatrix}
0 & 0 & 0 \\
0 & 0 & 0 \\
-\frac{u_{0x}}{4}-\frac{iv_{0}}{4\sqrt{3}} & -\frac{u_{0}}{2} & 0
\end{pmatrix} P(k),
\end{align*} 
where $v_{0}(x) := \int_{-\infty}^{x}u_{1}(x')dx'$ and
$$P(k) := \begin{pmatrix}
1 & 1 & 1  \\
l_{1}(k) & l_{2}(k) & l_{3}(k) \\
l_{1}(k)^{2} & l_{2}(k)^{2} & l_{3}(k)^{2}
\end{pmatrix}.$$

\item $X(x,k)$ and $X^A(x,k)$ are the unique solutions of the Volterra equations
\begin{align*}  
 & X(x,k) = I - \int_x^{\infty} e^{(x-x')\mathcal{L}(k)} (\mathsf{U}X)(x',k) e^{-(x-x')\mathcal{L}(k)} dx',
	\\
 & X^A(x,k) = I + \int_x^{\infty} e^{-(x-x')\mathcal{L}(k)} (\mathsf{U}^T X^A)(x',k)e^{(x-x')\mathcal{L}(k)} dx'.
\end{align*}

\end{enumerate}

Our assumptions on $\{u_0, u_1\}$ can now be formulated as follows:
\begin{enumerate}[$(i)$]
\item Mass conservation: we suppose that $\int_{\mathbb{R}}u_{1}(x)dx = 0$.

\item Absence of solitons: we assume that $s(k)_{11}\neq 0$ for $k \in (\bar{D}_2 \cup \partial \D) \setminus \hat{\mathcal{Q}}$, where $\{D_n\}_1^6$ are the open subsets of the complex plane shown in Figure \ref{fig: Dn}, $\partial \D$ is the unit circle, and $\hat{\mathcal{Q}} = \{\kappa_{j}\}_{j=1}^{6} \cup \{0\}$ with $\kappa_{j} = e^{\frac{\pi i(j-1)}{3}}$, $j=1,\ldots,6$, the sixth roots of unity.

\item Generic behavior of $s$ and $s^{A}$  near $k=1$ and $k=-1$: for $k_{\star} =\pm 1$, we assume that
\begin{align*}
& \lim_{k \to k_{\star}} (k-k_{\star}) s(k)_{11} \neq 0, & & \hspace{-0.1cm} \lim_{k \to k_{\star}} (k-k_{\star}) s(k)_{13} \neq 0, & & \hspace{-0.1cm} \lim_{k \to k_{\star}} s(k)_{31} \neq 0, & & \hspace{-0.1cm} \lim_{k \to k_{\star}} s(k)_{33} \neq 0, \\
& \lim_{k \to k_{\star}} (k-k_{\star}) s^{A}(k)_{11} \neq 0, & & \hspace{-0.1cm} \lim_{k \to k_{\star}} (k-k_{\star}) s^{A}(k)_{31} \neq 0, & & \hspace{-0.1cm} \lim_{k \to k_{\star}} s^{A}(k)_{13} \neq 0, & & \hspace{-0.1cm} \lim_{k \to k_{\star}} s^{A}(k)_{33} \neq 0.
\end{align*}

\item Existence of a global solution of the initial value problem: we suppose that $r_{1}(k)=0$ for all $k \in [0,i]$, where $[0,i]$ is the vertical segment from $0$ to $i$.

\end{enumerate}

\begin{figure}
\begin{center}
\begin{tikzpicture}[scale=0.7]
\node at (0,0) {};
\draw[black,line width=0.45 mm,->-=0.4,->-=0.85] (0,0)--(30:4);
\draw[black,line width=0.45 mm,->-=0.4,->-=0.85] (0,0)--(90:4);
\draw[black,line width=0.45 mm,->-=0.4,->-=0.85] (0,0)--(150:4);
\draw[black,line width=0.45 mm,->-=0.4,->-=0.85] (0,0)--(-30:4);
\draw[black,line width=0.45 mm,->-=0.4,->-=0.85] (0,0)--(-90:4);
\draw[black,line width=0.45 mm,->-=0.4,->-=0.85] (0,0)--(-150:4);

\draw[black,line width=0.45 mm] ([shift=(-180:2.5cm)]0,0) arc (-180:180:2.5cm);
\draw[black,arrows={-Triangle[length=0.2cm,width=0.18cm]}]
($(3:2.5)$) --  ++(90:0.001);
\draw[black,arrows={-Triangle[length=0.2cm,width=0.18cm]}]
($(57:2.5)$) --  ++(-30:0.001);
\draw[black,arrows={-Triangle[length=0.2cm,width=0.18cm]}]
($(123:2.5)$) --  ++(210:0.001);
\draw[black,arrows={-Triangle[length=0.2cm,width=0.18cm]}]
($(177:2.5)$) --  ++(90:0.001);
\draw[black,arrows={-Triangle[length=0.2cm,width=0.18cm]}]
($(243:2.5)$) --  ++(330:0.001);
\draw[black,arrows={-Triangle[length=0.2cm,width=0.18cm]}]
($(297:2.5)$) --  ++(210:0.001);

\draw[black,line width=0.15 mm] ([shift=(-30:0.55cm)]0,0) arc (-30:30:0.55cm);

\node at (0.8,0) {$\tiny \frac{\pi}{3}$};

\node at (-1:2.9) {\footnotesize $\Gamma_8$};
\node at (60:2.9) {\footnotesize $\Gamma_9$};
\node at (120:2.9) {\footnotesize $\Gamma_7$};
\node at (181:2.9) {\footnotesize $\Gamma_8$};
\node at (240:2.83) {\footnotesize $\Gamma_9$};
\node at (300:2.83) {\footnotesize $\Gamma_7$};

\node at (105:1.45) {\footnotesize $\Gamma_1$};
\node at (138:1.45) {\footnotesize $\Gamma_2$};
\node at (223:1.45) {\footnotesize $\Gamma_3$};
\node at (-104:1.45) {\footnotesize $\Gamma_4$};
\node at (-42:1.45) {\footnotesize $\Gamma_5$};
\node at (43:1.45) {\footnotesize $\Gamma_6$};

\node at (97:3.3) {\footnotesize $\Gamma_4$};
\node at (144:3.3) {\footnotesize $\Gamma_5$};
\node at (217:3.3) {\footnotesize $\Gamma_6$};
\node at (-96:3.3) {\footnotesize $\Gamma_1$};
\node at (-35:3.3) {\footnotesize $\Gamma_2$};
\node at (36:3.3) {\footnotesize $\Gamma_3$};
\end{tikzpicture}
\hspace{1.7cm}
\begin{tikzpicture}[scale=0.7]
\node at (0,0) {};
\draw[black,line width=0.45 mm] (0,0)--(30:4);
\draw[black,line width=0.45 mm] (0,0)--(90:4);
\draw[black,line width=0.45 mm] (0,0)--(150:4);
\draw[black,line width=0.45 mm] (0,0)--(-30:4);
\draw[black,line width=0.45 mm] (0,0)--(-90:4);
\draw[black,line width=0.45 mm] (0,0)--(-150:4);

\draw[black,line width=0.45 mm] ([shift=(-180:2.5cm)]0,0) arc (-180:180:2.5cm);
\draw[black,line width=0.15 mm] ([shift=(-30:0.55cm)]0,0) arc (-30:30:0.55cm);

\node at (120:1.6) {\footnotesize{$D_{1}$}};
\node at (-60:3.7) {\footnotesize{$D_{1}$}};

\node at (180:1.6) {\footnotesize{$D_{2}$}};
\node at (0:3.7) {\footnotesize{$D_{2}$}};

\node at (240:1.6) {\footnotesize{$D_{3}$}};
\node at (60:3.7) {\footnotesize{$D_{3}$}};

\node at (-60:1.6) {\footnotesize{$D_{4}$}};
\node at (120:3.7) {\footnotesize{$D_{4}$}};

\node at (0:1.6) {\footnotesize{$D_{5}$}};
\node at (180:3.7) {\footnotesize{$D_{5}$}};

\node at (60:1.6) {\footnotesize{$D_{6}$}};
\node at (-120:3.7) {\footnotesize{$D_{6}$}};

\node at (0.8,0) {$\tiny \frac{\pi}{3}$};

\draw[fill] (0:2.5) circle (0.1);
\draw[fill] (60:2.5) circle (0.1);
\draw[fill] (120:2.5) circle (0.1);
\draw[fill] (180:2.5) circle (0.1);
\draw[fill] (240:2.5) circle (0.1);
\draw[fill] (300:2.5) circle (0.1);

\node at (0:2.9) {\footnotesize{$\kappa_1$}};
\node at (60:2.85) {\footnotesize{$\kappa_2$}};
\node at (120:2.85) {\footnotesize{$\kappa_3$}};
\node at (180:2.9) {\footnotesize{$\kappa_4$}};
\node at (240:2.85) {\footnotesize{$\kappa_5$}};
\node at (300:2.85) {\footnotesize{$\kappa_6$}};

\draw[dashed] (-6.3,-3.8)--(-6.3,3.8);

\end{tikzpicture}
\end{center}
\begin{figuretext}\label{fig: Dn}
The contour $\Gamma = \cup_{j=1}^9 \Gamma_j$ in the complex $k$-plane (left) and the open sets $D_{n}$, $n=1,\ldots,6$, together with the sixth roots of unity $\kappa_j$, $j = 1, \dots, 6$ (right).
\end{figuretext}
\end{figure}

\subsection{Statement of the main result}
It is shown in \cite{CLmain} that the solution $u(x,t)$ to the initial value problem for \eqref{boussinesq} with initial data $u_0, u_1$ satisfying the assumptions $(i)$--$(iv)$ can be expressed in terms of the solution $n$ of a row-vector RH problem with jump contour $\Gamma = \cup_{j=1}^9 \Gamma_j$ shown and oriented as in Figure \ref{fig: Dn}. In particular, the assumptions $(i)$--$(iv)$ imply that the solution $u(x,t)$ exists globally. Our main theorem provides the asymptotics of $u(x,t)$ in Sector I given by $\tau := t/x \in [0,\tau_{\max}]$, where $\tau_{\max} \in (0,1)$ is a constant. 
The asymptotic formula is given in terms of two spectral functions $r_{1}(k)$ and $r_{2}(k)$ defined by
\begin{align}\label{r1r2def}
\begin{cases}
r_1(k) = \frac{(s(k))_{12}}{(s(k))_{11}}, & k \in \hat{\Gamma}_{1}\setminus \hat{\mathcal{Q}},
	\\ 
r_2(k) = \frac{(s^A(k))_{12}}{(s^A(k))_{11}}, \quad & k \in \hat{\Gamma}_{4}\setminus \hat{\mathcal{Q}},
\end{cases}
\end{align}	
where $\hat{\Gamma}_{j} = \Gamma_{j} \cup \partial \D$ denotes the union of $\Gamma_j$ and the unit circle. We define $\tilde{\Phi}_{ij}(\tau, k)$ for $1 \leq j<i \leq 3$ by
\begin{align}\label{def of Phi ij}
 \tilde{\Phi}_{ij}(\tau,k) = (l_{i}(k)-l_{j}(k)) + (z_{i}(k)-z_{j}(k))\tau, 
\end{align}
where $l_j(k)$ are as in (\ref{ljdef}) and the functions $z_j(k)$, $j = 1,2,3$, are defined similarly:
\begin{align}\label{zjdef}
& z_{j}(k) = i \frac{(\omega^{j}k)^{2} + (\omega^{j}k)^{-2}}{4\sqrt{3}}, \qquad k \in \C\setminus \{0\}.
\end{align}
Finally, we introduce the saddle points $\{k_{j}=k_{j}(\tau)\}_{j=1}^{4}$ of $\tilde{\Phi}_{21}$ which are given by
\begin{subequations}\label{def of kj}
\begin{align}
& k_{1} = \frac{1}{4\tau}\bigg(1 - \sqrt{8\tau^2 + 1} + i \sqrt{2}\sqrt{4\tau^2 - 1 + \sqrt{8\tau^2 + 1}} \bigg), && k_{2} = \bar{k}_1,  \\
& k_{3} = \frac{1}{4\tau}\bigg(1 + \sqrt{8\tau^2 + 1} + \sqrt{2}\sqrt{-4\tau^2 + 1 + \sqrt{8\tau^2 + 1}} \bigg), &&  k_{4}=k_{3}^{-1}, 
\end{align}
\end{subequations} 
and satisfy $|k_{1}|=1$, $k_{3}\in (1,+\infty)$, and $\arg k_{1} \in (\frac{\pi}{2},\frac{2\pi}{3})$. 
The following is our main result. 

\begin{theorem}[Asymptotics in Sector I]\label{asymptoticsth}
Let $u_0,u_1 \in \mathcal{S}(\R)$ be real-valued functions such that Assumptions $(i)$--$(iv)$ are fulfilled. For any integer $N \geq 1$, there exists a constant $\tau_{\max} \in (0,1)$ such that the global solution $u(x,t)$ of the initial value problem for (\ref{boussinesq}) with initial data $u_0, u_1$ satisfies
\begin{align}\label{uasymptotics}
& u(x,t) =
 \frac{A(\tau)}{\sqrt{x}} \cos \alpha(x, \tau) 
+ O\bigg(\frac{1}{x^N} +\frac{C_N(\tau) \ln x}{x}\bigg) \qquad \text{as $x \to +\infty$},
\end{align}
uniformly for $\tau = t/x \in [0, \tau_{\max}]$, where $C_{N}(\tau) \geq 0$ is a smooth function of $\tau$ that vanishes to all orders at $\tau = 0$, and $A(\tau)$ and $\alpha(x, \tau)$ are defined by
\begin{align}
& A(\tau) := 2\sqrt{3} \frac{\sqrt{-\nu}\sqrt{-1-2\cos (2\arg k_{1})}}{-ik_{1}z_{\star}} \im k_{1}, \nonumber \\
& \alpha(x, \tau) := \frac{3\pi}{4} + \arg r_{2}(k_{1}) + \arg \Gamma(i \nu) + \arg d_{0} + x \, \im \tilde{\Phi}_{21}(\tau,k_{1}). \nonumber
\end{align}
Here $\Gamma(k)$ is the Gamma function, $ z_{\star}$ is given by
\begin{align} 
 z_{\star} = z_{\star}(\tau) := \sqrt{2}e^{\frac{\pi i}{4}} \sqrt{\frac{4\tau -3k_{1} - k_{1}^{3}}{4k_{1}^{4}}},
\end{align}
where the branch of the square root is such that $-ik_{1}z_{\star}>0$, and
\begin{align} \nonumber
& \nu = \nu(k_1) := - \frac{1}{2\pi}\ln(1+r_{1}(k_{1})r_{2}(k_{1})) \leq 0, 
	 \\ \nonumber
& \arg d_{0} = \arg d_{0}(x, \tau) :=  \nu  \ln \bigg| \frac{(\frac{1}{\omega^{2}k_{1}}-k_{1})(\frac{1}{\omega k_{1}}-k_{1})}{3(\frac{1}{k_{1}}-k_{1})^{2}z_{\star}^{2}} \bigg| - \nu\ln x
	 \\ \label{def of nu beta}
& \hspace{2cm} + \frac{1}{2\pi} \int_{i}^{k_{1}} \ln \bigg| \frac{(k_{1}-s)^{2}(\frac{1}{\omega^{2}k_{1}}-s)(\frac{1}{\omega k_{1}}-s)}{(\frac{1}{k_{1}}-s)^{2}(\omega k_{1}-s)(\omega^{2} k_{1}-s)} \bigg| d \ln(1+r_{1}(s)r_{2}(s)), 
\end{align}
where the path of the integral $\int_{i}^{k_{1}}$ starts at $i$, follows the unit circle in the counterclockwise direction, and ends at $k_{1}$.
\end{theorem}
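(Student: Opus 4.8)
The plan is to carry out a Deift--Zhou steepest descent analysis of the row-vector RH problem for $n(x,t,k)$ with $x$ as the large parameter, keeping track of the $\tau$-dependence uniformly down to $\tau = 0$. First I would introduce the phase functions $\tilde\Phi_{ij}$ from \eqref{def of Phi ij} and describe the sign structure of $\re \tilde\Phi_{ij}$ and the locations of the saddle points; the essential point is that the six saddles of interest lie on the unit circle (together with $k_3,k_4$ off it, which contribute nothing to leading order), and as $\tau \to 0$ these six saddles coalesce with the six ramification points $\{ie^{\pi ij/3}\}_{j=0}^5 = \{\kappa_j\}$, which are precisely the intersection points of the jump contour $\Gamma$. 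I would then perform the standard opening sequence of transformations: a first transformation $n \mapsto n^{(1)}$ using a scalar $\delta$-function (a Szeg\H{o}-type function built from $\ln(1+r_1r_2)$ on the relevant arc of $\partial\D$) to prepare the jumps for factorization, and here the formula \eqref{def of nu beta} for $\arg d_0$ and the exponent $\nu$ in \eqref{def of nu beta} will emerge as the local behavior of $\delta$ near $k_1$. Next, $n^{(1)} \mapsto n^{(2)}$ opening lenses so that the jumps on the deformed contour are exponentially close to the identity away from the saddle points.

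The crucial structural step, flagged in the introduction, is the extra transformation of Section \ref{nton1sec}: because $t$ is not uniformly large, the usual contour deformation onto steepest-descent paths is not immediately available near the origin, so one must first replace $r_1,r_2$ by analytic approximants on the six rays through the $\kappa_j$ \emph{and} separately on $\partial\D$, and then argue — using the nonlinear identities \eqref{r1r2 relation on the unit circle} and \eqref{r1r2 relation with kbar symmetry} obeyed by $r_1,r_2$ — that the mismatch between the two approximations on overlapping domains produces jumps that are negligible (this is exactly Lemma \ref{v1av4asmalllemma}). I would then introduce a global parametrix $P^{(\mathrm{glob})}$ solving the model problem with the jumps that survive on $\partial\D$ (this is the source of the ``global parametrix'' contribution mentioned in the introduction), and at each of the six saddle points a local parametrix built from parabolic cylinder functions, matching the confluent-hypergeometric/Gamma-function local model; the quantity $z_\star(\tau)$ is the local rescaling variable coming from the quadratic Taylor coefficient of $\tilde\Phi_{21}$ at $k_1$, and $\arg\Gamma(i\nu)$ in $\alpha$ is the classical reflection phase from that model. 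A symmetry argument relates the contributions of the six saddles in pairs (via $k \mapsto \bar k$, $k \mapsto 1/k$, and the $\omega$-rotation built into the $l_j,z_j$), so only one independent saddle has to be analyzed in detail.

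Having all parametrices in hand, I would set $n = n^{(\text{last})} P^{(\mathrm{glob})}$ away from the saddle disks and $n = n^{(\text{last})} P^{(\mathrm{loc})}_j$ inside them, so that the ratio $R := n^{(\text{last})}(P^{(\mathrm{glob})})^{-1}$ (appropriately defined) solves a small-norm RH problem: its jump matrix is $I + O(x^{-1/2})$ on the saddle-disk boundaries (with an $x^{-1/2}$-accurate expansion whose first correction is computed explicitly from the parabolic-cylinder model), $I + O(x^{-N})$ on the frozen parts of $\Gamma$, and the new jumps from Section \ref{nton1sec} contribute $O(C_N(\tau)x^{-1})$ with $C_N$ vanishing to all orders at $\tau=0$ because the ray-approximation errors are controlled by how far $\tau$ is from $0$. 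Standard small-norm theory then gives $R = I + O(x^{-1/2})$ with an explicit subleading term, and I would recover $u(x,t)$ by tracking the coefficient in the expansion of $n(x,t,k)$ as $k \to \kappa_1$ (or whichever normalization point is used in \cite{CLmain}) through all the transformations. Collecting the constant, the global-parametrix factor, and the six saddle-point contributions — which combine into a single cosine because they are complex-conjugate/reflection pairs — produces the amplitude $A(\tau)$ and the phase $\alpha(x,\tau) = \frac{3\pi}{4} + \arg r_2(k_1) + \arg\Gamma(i\nu) + \arg d_0 + x\,\im\tilde\Phi_{21}(\tau,k_1)$, with the $\ln x$ in the error coming from the next term of the small-norm expansion.

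The main obstacle will be the uniformity as $\tau \to 0$: all the standard steepest-descent machinery degenerates when the saddles $k_j$ hit the contour intersection points $\kappa_j$, so the local parametrices, the opening of lenses, and the $\delta$-function must be constructed in a way that remains valid (with constants that are controlled by $C_N(\tau)$ and blow up at worst like $\ln x$) all the way down to $\tau = 0$, and the extra transformation of Section \ref{nton1sec} together with Lemma \ref{v1av4asmalllemma} is exactly what is needed to make this work. Secondary technical points are verifying that $k_3,k_4$ and the other off-circle saddles genuinely contribute only to the error, and checking that the two competing analytic approximations of $r_1,r_2$ on overlapping domains are reconciled correctly via the nonlinear relations — I would do this last check carefully since it is the least standard ingredient.
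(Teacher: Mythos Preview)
Your overall plan is the right one and matches the paper's approach: a Deift--Zhou steepest descent with an extra preliminary step to guarantee uniformity as $\tau\to 0$, a global parametrix built from a scalar $\delta$-function, parabolic-cylinder local parametrices at the six saddle points (reduced to one by the $\mathcal{A}$- and $\mathcal{B}$-symmetries), and a small-norm argument. However, several concrete details are off and would cause trouble if you tried to execute the proof as written.

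First, the ordering of transformations is scrambled. In the paper the ``extra transformation of Section~\ref{nton1sec}'' \emph{is} the first transformation $n\mapsto n^{(1)}$: it uses analytic approximations of $r_1,r_2$ on the six rays (Decomposition Lemma~I, with phase $-i\tilde\Phi_{21}(0,k)$ rather than $-i\tilde\Phi_{21}(\tau,k)$) to deform $v_{1''}$ and $v_{4'}$ off the imaginary axis, where the exponentials would otherwise lose their decay at $\tau=0$. Only after this, and after the lens opening $n^{(1)}\mapsto n^{(2)}\mapsto n^{(3)}$ (which requires a \emph{second} set of analytic approximations, Decomposition Lemma~II, on arcs of $\partial\D$), does the paper introduce the $\delta$-function and conjugate by the global parametrix $\Delta$ to get $n^{(4)}$. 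You have placed the $\delta$-conjugation first and treated the Section~\ref{nton1sec} step as something separate and later; that is not how the paper proceeds, and reversing it would complicate the lens factorizations.

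Second, the recovery formula is wrong: $u(x,t)$ is extracted from the coefficient of $k^{-1}$ in $n_3(x,t,k)$ as $k\to\infty$ via $u=-i\sqrt{3}\,\partial_x n_3^{(1)}$ (equation~\eqref{recoveruvn}), not from behavior at $\kappa_1$.

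Third, you misattribute the source of the $C_N(\tau)$ error. The new jumps produced by the overlap of the two analytic approximations (on $\Gamma_{1_a}^{(3)}\cup\Gamma_{4_a}^{(3)}$) are in fact $O(x^{-N})$ uniformly in $\tau$ --- that is precisely the content of Lemma~\ref{v1av4asmalllemma}. The $C_N(\tau)$ factor comes instead from the \emph{local parametrix}: since $k_1\to i$ as $\tau\to 0$ and $r_1$ vanishes to all orders at $i$ (Assumption~(iv)), the quantity $q=-\tilde r(k_1)^{-1/2}r_2(k_1)$ and hence $m_1^X(q)$ vanish to all orders, giving the $C_N(\tau)x^{-1/2}$ contribution on $\partial\mathcal{D}$ and $C_N(\tau)x^{-1}\ln x$ on $\hat{\mathcal{X}}^\epsilon$ in Lemma~\ref{whatlemma}.
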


\begin{remark}
Since $k_1$ tends to $i$ as $\tau \to 0$ and $r_1(k)$ vanishes to all orders at $k = i$, it follows that $\nu$, and hence also $A(\tau)$, vanishes to all orders as $\tau \to 0$. Consequently, (\ref{uasymptotics}) implies that
\begin{align*}
& u(x,t) = O\bigg(\frac{1}{x^N} +\frac{C_N(\tau)}{\sqrt{x}}\bigg) \qquad \text{as $x \to +\infty$}
\end{align*}
uniformly for $\tau \in [0, \tau_{\max}]$. In particular, for any fixed $t \geq 0$, the formula (\ref{uasymptotics}) reduces to $u(x,t) = O(x^{-N})$ as $x \to +\infty$, in consistency with the fact that $u(x,t)$ is a Schwartz class solution.
\end{remark}

\begin{remark}
By expressing the asymptotic formula in Theorem \ref{asymptoticsth} in terms of $\zeta = x/t$ instead of $\tau = t/x$, we see that it is equivalent to the formula stated in \cite{CLmain} for Sector I (note that $z_{\star}$ here should be identified with $z_{\star}/\sqrt{\zeta}$ in \cite{CLmain}). We will use the variable $\tau$ in this paper, because it allows us to evaluate certain expressions at $t = 0$ and $\tau = 0$ without taking limits. 
\end{remark}

\begin{remark}
By combining the result of Theorem \ref{asymptoticsth} with the asymptotic formula for $u(x,t)$ in Sector II (see \cite{CLmain}), it can be seen that the conclusion of Theorem \ref{asymptoticsth} in fact holds for any choice of $\tau_{\max} \in (0,1)$.
\end{remark}

The proof of Theorem \ref{asymptoticsth} is given in Sections \ref{overviewsec}--\ref{uasymptoticssec}. It is based on a steepest descent analysis of a RH problem derived in \cite{CLmain} for a $1\times 3$-row-vector valued solution $n$, which we next recall.

\section{The RH problem for $n$}\label{RHnsec}
For $j = 1, \dots, 6$, let $\Gamma_{j'} = \Gamma_j \setminus \D$ be the part of $\Gamma_j$ outside the open unit disk $\D = \{k \in \C \, | \, |k| < 1\}$ and let $\Gamma_{j''} := \Gamma_j \setminus \Gamma_{j'}$ be the part inside $\D$, where $\Gamma_j$ is as in Figure \ref{fig: Dn}.
Let $\theta_{ij}(x,t,k) = x \, \tilde{\Phi}_{ij}(\tau,k)$ and denote by $\Gamma_{\star} = \{i\kappa_j\}_{j=1}^6 \cup \{0\}$ the set of intersection points of $\Gamma$.
Define the function $f$ on the unit circle by
\begin{align}\label{def of f}
f(k) := 1+r_{1}(k)r_{2}(k) + r_{1}(\tfrac{1}{\omega^{2}k})r_{2}(\tfrac{1}{\omega^{2}k}), \qquad k \in \partial \D,
\end{align}
and define the jump matrix $v(x,t,k)$ for $k \in \Gamma$ by
\begin{align}
& v_{1'} = \begin{pmatrix}
1 & -r_{1}(k)e^{-\theta_{21}} & 0 \\
0 & 1 & 0 \\
0 & 0 & 1
\end{pmatrix}, \; v_{1''} = \begin{pmatrix}
1 & 0 & 0 \\
r_{1}(\frac{1}{k})e^{\theta_{21}} & 1 & 0 \\
0 & 0 & 1
\end{pmatrix}, \; v_{2'} = \begin{pmatrix}
1 & 0 & 0 \\
0 & 1 & -r_{2}(\frac{1}{\omega k})e^{-\theta_{32}} \\
0 & 0 & 1
\end{pmatrix}, \nonumber \\
& v_{2''} = \begin{pmatrix}
1 & 0 & 0 \\
0 & 1 & 0 \\
0 & r_{2}(\omega k)e^{\theta_{32}} & 1
\end{pmatrix}, \;  v_{3'} = \begin{pmatrix}
1 & 0 & 0 \\
0 & 1 & 0 \\
-r_{1}(\omega^{2}k)e^{\theta_{31}} & 0 & 1
\end{pmatrix}, \; v_{3''} = \begin{pmatrix}
1 & 0 & r_{1}(\frac{1}{\omega^{2}k})e^{-\theta_{31}} \\
0 & 1 & 0 \\
0 & 0 & 1
\end{pmatrix}, \nonumber \\
& v_{4'} = \begin{pmatrix}
1 & -r_{2}(\frac{1}{k})e^{-\theta_{21}} & 0 \\
0 & 1 & 0 \\
0 & 0 & 1
\end{pmatrix}, \; v_{4''} = \begin{pmatrix}
1 & 0 & 0 \\
r_{2}(k)e^{\theta_{21}} & 1 & 0 \\
0 & 0 & 1
\end{pmatrix}, \; v_{5'} = \begin{pmatrix}
1 & 0 & 0 \\
0 & 1 & -r_{1}(\omega k)e^{-\theta_{32}} \\
0 & 0 & 1
\end{pmatrix}, \nonumber \\
& v_{5''} = \begin{pmatrix}
1 & 0 & 0 \\
0 & 1 & 0 \\
0 & r_{1}(\frac{1}{\omega k})e^{\theta_{32}} & 1
\end{pmatrix}, \;  v_{6'} = \begin{pmatrix}
1 & 0 & 0 \\
0 & 1 & 0 \\
-r_{2}(\frac{1}{\omega^{2} k})e^{\theta_{31}} & 0 & 1
\end{pmatrix}, \; v_{6''} = \begin{pmatrix}
1 & 0 & r_{2}(\omega^{2}k)e^{-\theta_{31}} \\
0 & 1 & 0 \\
0 & 0 & 1
\end{pmatrix}, \nonumber \\
& v_{7} = \begin{pmatrix}
1 & -r_{1}(k)e^{-\theta_{21}} & r_{2}(\omega^{2}k)e^{-\theta_{31}} \\
-r_{2}(k)e^{\theta_{21}} & 1+r_{1}(k)r_{2}(k) & \big(r_{2}(\frac{1}{\omega k})-r_{2}(k)r_{2}(\omega^{2}k)\big)e^{-\theta_{32}} \\
r_{1}(\omega^{2}k)e^{\theta_{31}} & \big(r_{1}(\frac{1}{\omega k})-r_{1}(k)r_{1}(\omega^{2}k)\big)e^{\theta_{32}} & f(\omega^{2}k)
\end{pmatrix}, \nonumber \\
& v_{8} = \begin{pmatrix}
f(k) & r_{1}(k)e^{-\theta_{21}} & \big(r_{1}(\frac{1}{\omega^{2} k})-r_{1}(k)r_{1}(\omega k)\big)e^{-\theta_{31}} \\
r_{2}(k)e^{\theta_{21}} & 1 & -r_{1}(\omega k) e^{-\theta_{32}} \\
\big( r_{2}(\frac{1}{\omega^{2}k})-r_{2}(\omega k)r_{2}(k) \big)e^{\theta_{31}} & -r_{2}(\omega k) e^{\theta_{32}} & 1+r_{1}(\omega k)r_{2}(\omega k)
\end{pmatrix}, \nonumber \\
& v_{9} = \begin{pmatrix}
1+r_{1}(\omega^{2}k)r_{2}(\omega^{2}k) & \big( r_{2}(\frac{1}{k})-r_{2}(\omega k)r_{2}(\omega^{2} k) \big)e^{-\theta_{21}} & -r_{2}(\omega^{2}k)e^{-\theta_{31}} \\
\big(r_{1}(\frac{1}{k})-r_{1}(\omega k) r_{1}(\omega^{2} k)\big)e^{\theta_{21}} & f(\omega k) & r_{1}(\omega k)e^{-\theta_{32}} \\
-r_{1}(\omega^{2}k)e^{\theta_{31}} & r_{2}(\omega k) e^{\theta_{32}} & 1
\end{pmatrix}, \label{vdef}
\end{align}
where $v_j, v_{j'}, v_{j''}$ are the restrictions of $v$ to $\Gamma_{j}$, $\Gamma_{j'}$, and $\Gamma_{j''}$, respectively. 
The jump matrix $v$ obeys the symmetries
\begin{align}\label{vsymm}
v(x,t,k) = \mathcal{A} v(x,t,\omega k)\mathcal{A}^{-1}
 = \mathcal{B} v(x,t, k^{-1})^{-1}\mathcal{B}, \qquad k \in \Gamma,
\end{align}
where
\begin{align}\label{def of Acal and Bcal}
\mathcal{A} := \begin{pmatrix}
0 & 0 & 1 \\
1 & 0 & 0 \\
0 & 1 & 0
\end{pmatrix} \qquad \mbox{ and } \qquad \mathcal{B} := \begin{pmatrix}
0 & 1 & 0 \\
1 & 0 & 0 \\
0 & 0 & 1
\end{pmatrix}.
\end{align}
The following RH problem was derived in \cite{CLmain}.

\begin{RHproblem}[RH problem for $n$]\label{RHn}
Find a $1 \times 3$-row-vector valued function $n(x,t,k)$ with the following properties:
\begin{enumerate}[$(a)$]
\item\label{RHnitema} $n(x,t,\cdot) : \C \setminus \Gamma \to \mathbb{C}^{1 \times 3}$ is analytic.

\item\label{RHnitemb} The limits of $n(x,t,k)$ as $k$ approaches $\Gamma \setminus \Gamma_\star$ from the left and right exist, are continuous on $\Gamma \setminus \Gamma_\star$, and are denoted by $n_+$ and $n_-$, respectively. Moreover, 
\begin{align}\label{njump}
  n_+(x,t,k) = n_-(x, t, k) v(x, t, k) \qquad \text{for} \quad k \in \Gamma \setminus \Gamma_\star.
\end{align}

\item\label{RHnitemc} $n(x,t,k) = O(1)$ as $k \to k_{\star} \in \Gamma_\star$.

\item\label{RHnitemd} For $k \in \C \setminus \Gamma$, $n$ obeys the symmetries
\begin{align}\label{nsymm}
n(x,t,k) = n(x,t,\omega k)\mathcal{A}^{-1} = n(x,t,k^{-1}) \mathcal{B}.
\end{align}

\item\label{RHniteme} $n(x,t,k) = (1,1,1) + O(k^{-1})$ as $k \to \infty$.
\end{enumerate}
\end{RHproblem}

\section{Brief overview of the proof}\label{overviewsec}
Since $u_0$ and $u_1$ satisfy Assumptions $(i)$--$(iv)$, \cite[Theorems 2.6 and 2.12]{CLmain} imply that RH problem \ref{RHn} has a unique solution $n(x,t,k)$ for each $(x,t) \in \R \times [0,\infty)$, that
$$n_{3}^{(1)}(x,t) := \lim_{k\to \infty} k (n_{3}(x,t,k) -1)$$ 
is well-defined and smooth for $(x,t) \in \R \times [0,\infty)$, and that 
\begin{align}\label{recoveruvn}
u(x,t) = -i\sqrt{3}\frac{\partial}{\partial x}n_{3}^{(1)}(x,t)
\end{align}
is a real-valued Schwartz class solution of (\ref{boussinesq}) on $\R \times [0,\infty)$ with initial data $u(x,0)=u_{0}(x)$ and $u_{t}(x,0)=u_{1}(x)$.
We can therefore obtain the asymptotics of $u(x,t)$ by analyzing the RH problem \ref{RHn} with the help of Deift--Zhou \cite{DZ1993} steepest descent techniques. In this analysis the saddle points of the three phase functions $\tilde{\Phi}_{21}$, $\tilde{\Phi}_{31}$, and $\tilde{\Phi}_{32}$ play a central role.
The saddle points of $\tilde{\Phi}_{21}$, i.e., the solutions of $\partial_k \tilde{\Phi}_{21}(\tau,k) = 0$, were denoted by $\{k_{j}\}_{j=1}^{4}$ in Section \ref{section:main results}. Since
$\tilde{\Phi}_{31}(\tau,k) = - \tilde{\Phi}_{21}(\tau,\omega^{2}k)$ and $\tilde{\Phi}_{32}(\tau, k) = \tilde{\Phi}_{21}(\tau, \omega k)$,
it follows that $\{\omega k_{j}\}_{j=1}^{4}$ are the saddle points of $\tilde{\Phi}_{31}$ and that $\{\omega^{2} k_{j}\}_{j=1}^{4}$ are the saddle points of $\tilde{\Phi}_{32}$. 
For Sector I only the six saddle points $\{\omega^{j}k_{1},\omega^{j}k_{2}\}_{j=0}^{2}$ play a role in the asymptotic analysis. The signature tables for $\tilde{\Phi}_{21}$, $\tilde{\Phi}_{31}$, and $\tilde{\Phi}_{32}$ are shown in Figure \ref{IIbis fig: Re Phi 21 31 and 32 for zeta=0.7}. 

\begin{figure}[h]
\begin{center}
\begin{tikzpicture}[master]
\node at (0,0) {\includegraphics[width=4.5cm]{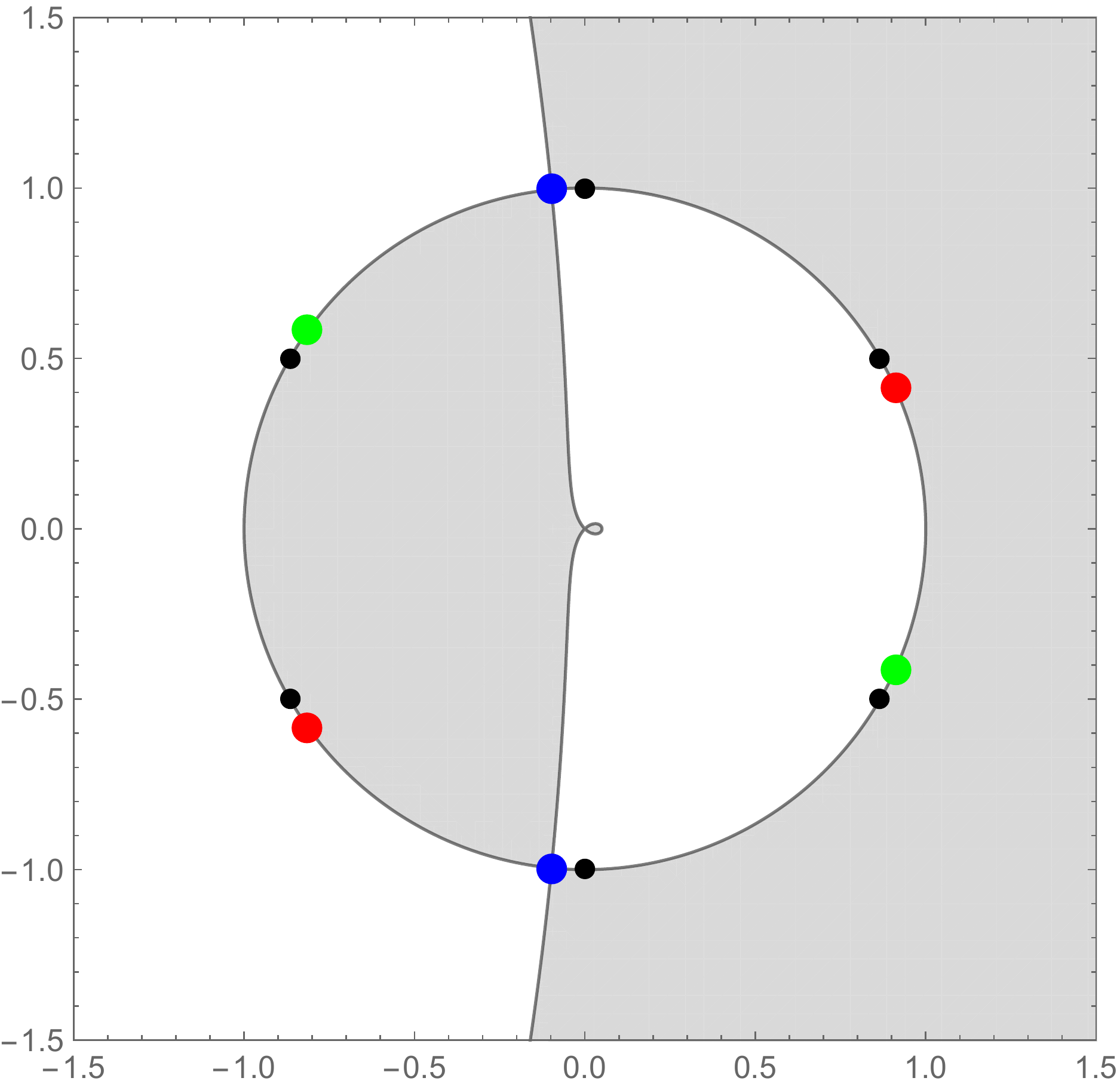}};

\node at (-0.22,1.55) {\tiny $k_1$};
\node at (-0.22,-1.43) {\tiny $k_2$};

\end{tikzpicture} \hspace{0.1cm} 
\begin{tikzpicture}[slave]
\node at (0,0) {\includegraphics[width=4.5cm]{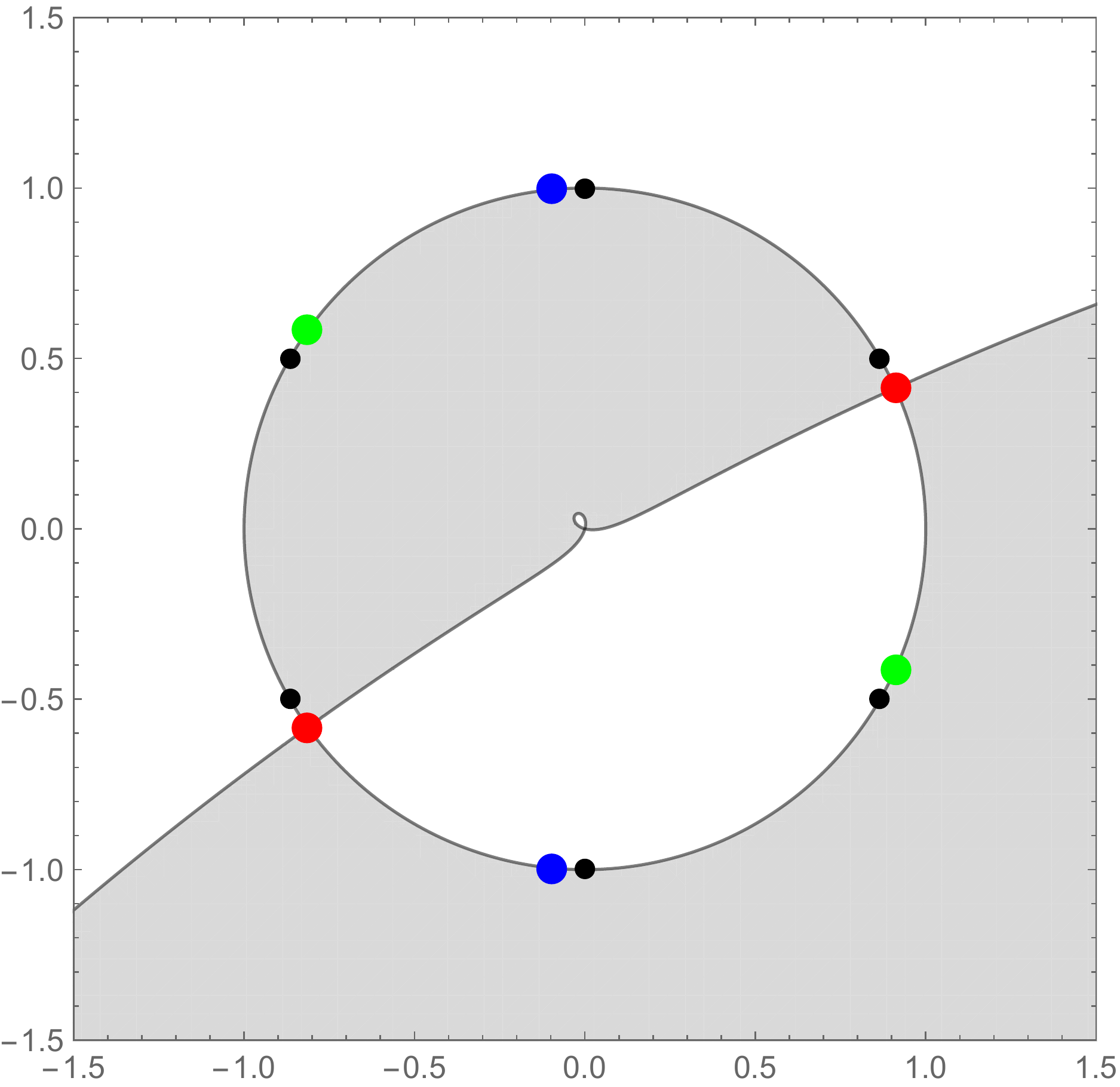}};

\node at (-1.06,-1.01) {\tiny $\omega k_1$};
\node at (1.7,0.55) {\tiny $\omega k_2$};

\end{tikzpicture} \hspace{0.1cm} 
\begin{tikzpicture}[slave]
\node at (0,0) {\includegraphics[width=4.5cm]{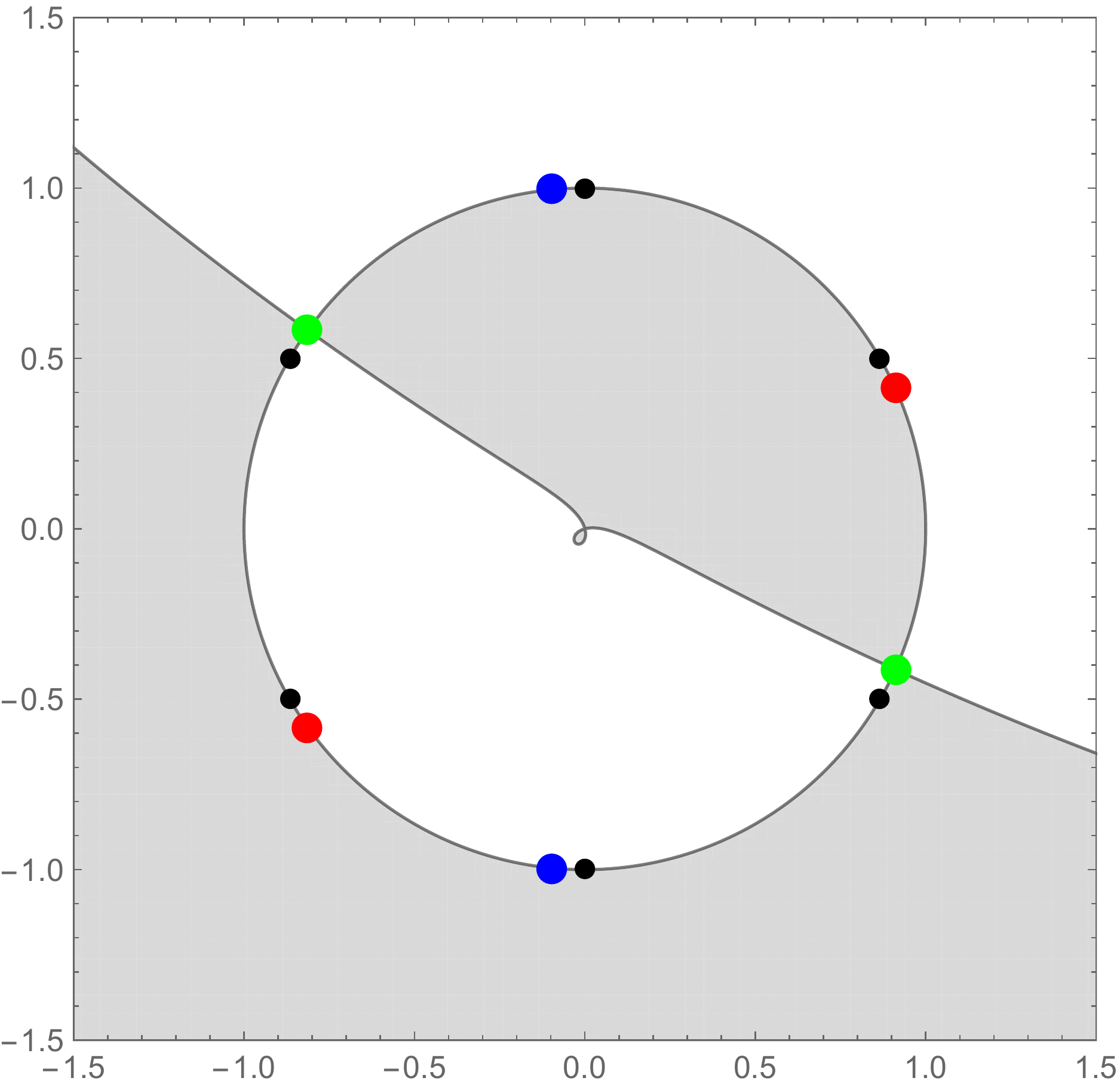}};

\node at (1.77,-0.4) {\tiny $\omega^2 k_1$};
\node at (-1.1,1.25) {\tiny $\omega^2 k_2$};

\end{tikzpicture}
\end{center}
\begin{figuretext}
\label{IIbis fig: Re Phi 21 31 and 32 for zeta=0.7}
From left to right: The signature tables for $\tilde{\Phi}_{21}$, $\tilde{\Phi}_{31}$, and $\tilde{\Phi}_{32}$ for $\tau=\frac{1}{10}$. The grey regions correspond to $\{k \,|\, \re \tilde{\Phi}_{ij}>0\}$ and the white regions to $\{k \,|\, \re \tilde{\Phi}_{ij}<0\}$. The saddle points $k_{1},k_{2}$ of $\tilde{\Phi}_{21}$ are shown blue, the saddle points $\omega k_{1},\omega k_{2}$ of $\tilde{\Phi}_{31}$ red, and the saddle points $\omega^{2} k_{1},\omega^{2} k_{2}$ of $\tilde{\Phi}_{32}$ green. The black dots are the points $i \kappa_j$, $j=1,\ldots,6$.
\end{figuretext}
\end{figure}

The steepest descent analysis proceeds via several transformations $n \mapsto n^{(1)} \mapsto n^{(2)} \mapsto n^{(3)} \mapsto n^{(4)} \mapsto \hat{n}$, such that the RH problems satisfied by $n^{(1)}, n^{(2)}, n^{(3)}, n^{(4)}$, and $\hat{n}$ are equivalent to RH problem \ref{RHn} satisfied by $n$. We will let $\Gamma^{(j)}, \hat{\Gamma}$ and $v^{(j)},\hat{v}$ denote the contours and jump matrices of the RH problems for $n^{(j)}, \hat{n}$, respectively. The symmetries (\ref{vsymm}) and (\ref{nsymm}) will be preserved by each transformation, so that, for $j = 1, 2,3,4$,
\begin{align}\label{vjsymm}
& v^{(j)}(x,t,k) = \mathcal{A} v^{(j)}(x,t,\omega k)\mathcal{A}^{-1}
 = \mathcal{B} v^{(j)}(x,t,k^{-1})^{-1}\mathcal{B}, & & k \in \Gamma^{(j)},
	\\ \label{mjsymm}
& n^{(j)}(x,t, k) = n^{(j)}(x,t,\omega k)\mathcal{A}^{-1}
 = n^{(j)}(x,t, k^{-1}) \mathcal{B}, & & k \in \C \setminus \Gamma^{(j)},
\end{align}
and similarly for $\hat{n}$ and $\hat{v}$. We will see that six local parametrices near the saddle points $\{k_{j},\omega k_{j},\omega^{2}k_{j}\}_{j=1}^{2}$ as well as a global parametrix $\Delta^{-1}$ contribute to the asymptotics of $u(x,t)$. 
However, due to the symmetries \eqref{vjsymm}--\eqref{mjsymm}, we will only need to construct the local parametrix near $k_{1}$ explicitly, and we will only need to define the transformations $n^{(j)} \mapsto n^{(j+1)}$ and $n^{(4)}\mapsto \hat{n}$ in the sector $\mathsf{S}:=\{k \in \mathbb{C}|  \arg k \in [\frac{\pi}{3},\frac{2\pi}{3}]\}$. 
In the end, we will find that $\hat{n}$ satisfies a small-norm RH problem whose solution can be expressed as an integral over the contour $\hat{\Gamma}$.
This leads to an asymptotic formula for $\hat{n}^{(1)}(x,t):=\lim_{k\to \infty} k(\hat{n}(x,t,k) - (1,1,1))$, and hence, by inverting the above transformations, an asymptotic formula also for $n_3^{(1)}(x,t)$. By substituting this formula into (\ref{recoveruvn}), we obtain the formula of Theorem \ref{asymptoticsth}.

The following properties of the functions $r_1$ and $r_2$ established in \cite[Theorem 2.3]{CLmain} will be used repeatedly throughout the proof: $r_1 \in C^\infty(\hat{\Gamma}_{1})$, $r_2 \in C^\infty(\hat{\Gamma}_{4}\setminus \{\pm \omega^{2}\})$, $r_{1}(\kappa_{j})\neq 0$ for $j=1,\ldots,6$, $r_{2}(k)$ has simple zeros at $k=\pm\omega$ and simple poles at $k=\pm \omega^2$, and $r_{1},r_{2}$ have rapid decay as $|k|\to \infty$. Moreover,
\begin{align}
& r_{1}(\tfrac{1}{\omega k}) + r_{2}(\omega k) + r_{1}(\omega^{2} k) r_{2}(\tfrac{1}{k}) = 0, & & k \in \partial \mathbb{D}\setminus \{\pm \omega\}, \label{r1r2 relation on the unit circle} \\
& r_{2}(k) = \tilde{r}(k)\overline{r_{1}(\bar{k}^{-1})}, \qquad \tilde{r}(k) :=\frac{\omega^{2}-k^{2}}{1-\omega^{2}k^{2}}, & & k \in \hat{\Gamma}_{4}\setminus \{0, \pm \omega^{2}\}, \label{r1r2 relation with kbar symmetry} \\
& r_{1}(1) = r_{1}(-1) = 1, \qquad r_{2}(1) = r_{2}(-1) = -1. \label{r1r2at0}
\end{align}


\section{The $n \to n^{(1)}$ transformation}\label{nton1sec}
Let $\tau = t/x$ and note that $x \tilde{\Phi}_{ij}(\tau, k) = \theta_{ij}$.
We will use the notation $\mathcal{I} := [0, \tau_{\max}]$, where $\tau_{\max} \in (0,1)$ is a constant, so that Sector I corresponds to $\tau \in \mathcal{I}$.
The jump matrices $v_{1''}$ and $v_{4'}$ involve the off-diagonal entries $r_{1}(\frac{1}{k})e^{x\tilde{\Phi}_{21}(\tau,k)} $ and $-r_{2}(\frac{1}{k})e^{-x\tilde{\Phi}_{21}(\tau,k)}$, respectively. As long as $\tau > 0$ stays away from $0$, the exponentials $e^{x\tilde{\Phi}_{21}(\tau,k)}$ and $e^{-x\tilde{\Phi}_{21}(\tau,k)}$ are exponentially small as $x \to \infty$ for $k \in (0,i)$ and $k \in (i,i\infty)$, respectively. However, 
$$\tilde{\Phi}_{21}(\tau, k) = \frac{k^2 - 1}{2k} - \frac{k^{4}-1}{4k^2}\tau,$$
so that $\tilde{\Phi}_{21}(0, i \im k) =  i\frac{(\im k)^2 + 1}{2\im k}$ is pure imaginary for all $k \in i\R$, meaning that the above exponentials lose their decay as $\tau \to 0$.
The goal of the first transformation $n \to n^{(1)}$ is to deform the jumps $v_{1''}$ and $v_{4'}$ into regions where the exponentials retain their decay as $x \to \infty$ uniformly for small $\tau$. For this purpose, we need to introduce analytic approximations of the functions $r_1$ and $r_2$.
Let $V_1 \subset \C$ be the open set (see Figure \ref{V1fig})
\begin{align*}
V_{1} = & \; \{k \in \C \,| \arg k \in (-\tfrac{\pi}{2},-\tfrac{\pi}{3}), \; |k| >1 \}.
\end{align*}

\begin{figure}
\begin{center}
\begin{tikzpicture}[master]
\node at (0,0) {\includegraphics[width=5cm]{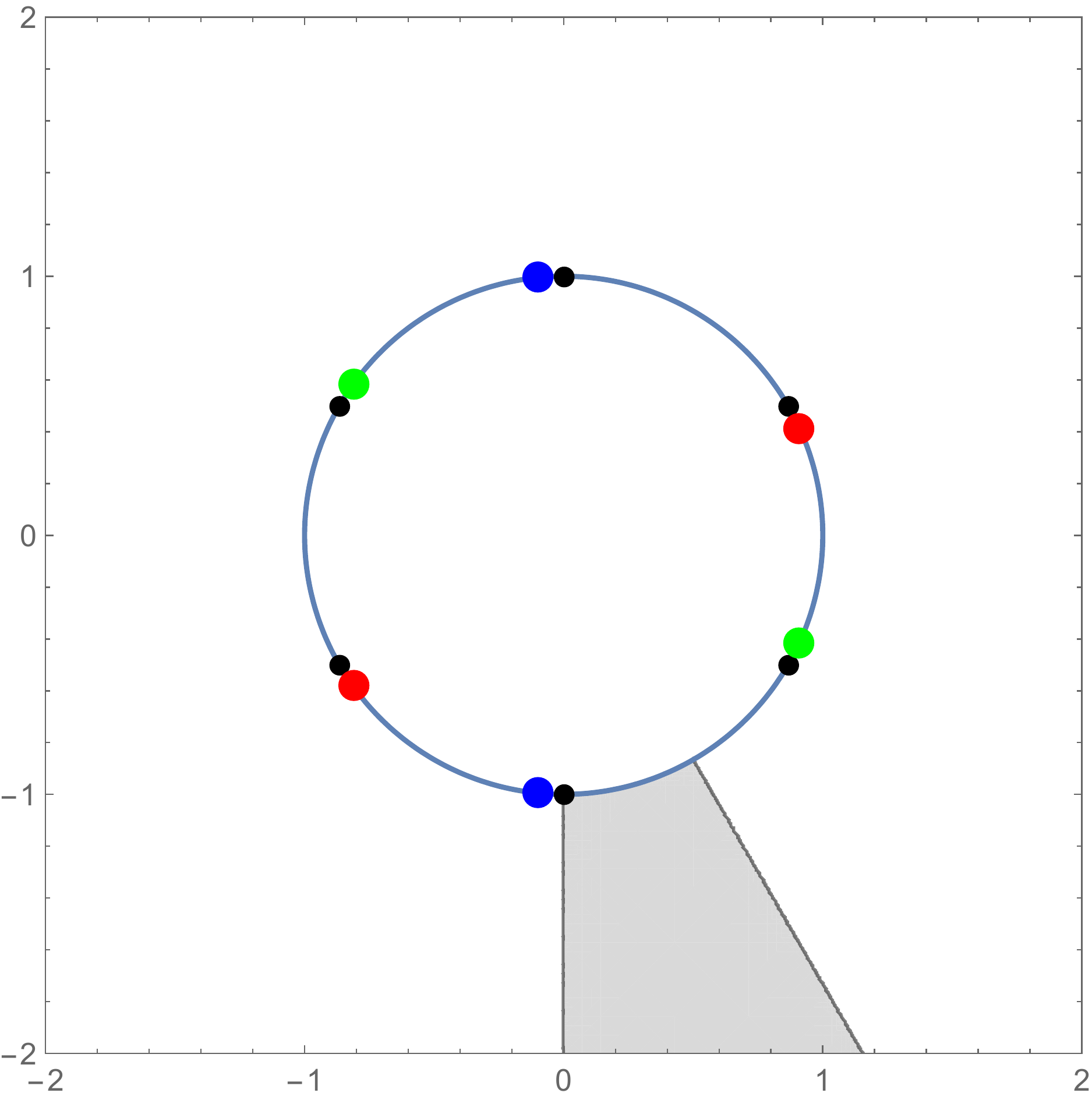}};
\node at (0.5,-1.5) {\tiny $V_{1}$};
\end{tikzpicture}
\end{center}
\begin{figuretext} \label{V1fig} 
The open subset $V_{1}$ of the complex $k$-plane.
\end{figuretext}
\end{figure}

\begin{lemma}[Decomposition lemma I]\label{decompositionlemma1}
For every integer $N \geq 1$, there exists a decomposition
\begin{align}\label{r1decomposition1}
& r_{1}(k) = \tilde{r}_{1,a}(x,k) + \tilde{r}_{1,r}(x,k), & & k \in \partial V_{1} \cap i\R =  (-i\infty, -i], 
\end{align}
such that the functions $\tilde{r}_{1,a}, \tilde{r}_{1,r}$ have the following properties:
\begin{enumerate}[$(a)$]
\item \label{decompositionlemma1parta}
For each $x \geq 1$, $\tilde{r}_{1,a}(x,k)$ is defined and continuous for $k \in \bar{V}_1$ and analytic for $k \in V_1$.

\item \label{decompositionlemma1partb}
For $x \geq 1$ and $\tau \in \mathcal{I}$, the function $\tilde{r}_{1,a}$ satisfies
\begin{align} 
& \Big| \tilde{r}_{1,a}(x,k)-\sum_{j=0}^{N}\frac{r_{1}^{(j)}(k_{\star})}{j!}(k-k_{\star})^{j}  \Big| \leq C |k-k_{\star}|^{N+1} e^{\frac{x}{4}|\re \tilde{\Phi}_{21}(\tau,k)|}, & & k \in \bar{V}_{1}, 
	\\ \label{tilder1aatinfinity}
& | \tilde{r}_{1,a}(x,k) | \leq \frac{C}{1+|k|^{N+1}} e^{\frac{x}{4}|\re \tilde{\Phi}_{21}(\tau,k)|}, & & k \in \bar{V}_{1}, 
\end{align}
where $k_{\star} = -i$ and the constant $C$ is independent of $x, \tau, k$.

\item \label{decompositionlemma1partc}
For each $1 \leq p \leq \infty$, the $L^p$-norm of $\tilde{r}_{1,r}(x,\cdot)$ on $\partial V_{1} \cap i\R$ is $O(x^{-N})$ as $x \to \infty$.
\end{enumerate}
\end{lemma}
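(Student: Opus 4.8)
The plan is to follow the now-standard analytic-approximation technique going back to Deift--Zhou, adapted to the present setting where the phase $\tilde\Phi_{21}$ degenerates as $\tau\to 0$. The key point is that near $k_\star=-i$ the function $r_1$ vanishes to all orders (by Assumption $(iv)$, $r_1\equiv 0$ on $[0,i]$, hence by symmetry and smoothness $r_1$ is flat at $-i$ as well), so the Taylor polynomial $\sum_{j=0}^N \frac{r_1^{(j)}(k_\star)}{j!}(k-k_\star)^j$ is in fact trivial; nonetheless I will carry the argument in the general form since that is what is used verbatim. I would first reduce to approximating $r_1$ on the ray $(-i\infty,-i]$ by splitting $r_1(k)=P_N(k)+\rho(k)$ where $P_N$ is (essentially) the truncated Taylor expansion of $r_1$ at $k_\star$ times a fixed rational cutoff that makes $P_N$ analytic on $\bar V_1$ and decaying like $|k|^{-N-1}$ at infinity; $P_N$ can be taken into $\tilde r_{1,a}$ directly since it is already entire-ish and estimate \eqref{tilder1aatinfinity} holds for it trivially (no exponential factor even needed). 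The real work is the remainder $\rho=r_1-P_N$, which vanishes to order $N+1$ at $k_\star$ and is Schwartz-class along the ray.

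The core step is to split $\rho$ into an analytic part and a small part using a contour-integral (Cauchy-type) formula along $\partial V_1\cap i\R$. Concretely, write $\rho$ in terms of a variable adapted to the phase: introduce $\phi=\phi_\tau(k)$, a biholomorphism (on a neighborhood of the relevant part of $\bar V_1$, uniformly in $\tau\in\mathcal I$) taking $k_\star$ to $0$ with $\re\phi$ comparable to $\re\tilde\Phi_{21}$ — here one uses that $\tilde\Phi_{21}(\tau,\cdot)$ has a critical point structure that, after the degeneracy, still gives the needed monotonicity of $\re\tilde\Phi_{21}$ along the ray and into $V_1$. Then set
\begin{align*}
\tilde r_{1,a}(x,k) = \frac{1}{2\pi i}\int_{\partial V_1\cap i\R} \frac{\rho(s)}{s-k}\, e^{-\frac{x}{?}(\cdots)}\,ds \quad \text{(schematically)},
\end{align*}
or, more cleanly, realize $\tilde r_{1,a}$ as $\rho$ itself pushed to be analytic in $V_1$ via a one-sided Cauchy integral, with the decay in $x$ extracted by integrating by parts $N$ times using the order-$(N+1)$ vanishing of $\rho$ at $k_\star$ together with the Schwartz decay at $-i\infty$. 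Each integration by parts against $e^{x\tilde\Phi_{21}}$ produces a factor $x^{-1}$ (from $\partial_k\tilde\Phi_{21}\ne 0$ away from the saddle, and from the vanishing at $k_\star$ to kill the boundary term there), yielding $\tilde r_{1,r}=\rho-\tilde r_{1,a}$ with $\|\tilde r_{1,r}(x,\cdot)\|_{L^p(\partial V_1\cap i\R)}=O(x^{-N})$, giving part $(c)$. The bounds in part $(b)$ follow because the Cauchy integral defining $\tilde r_{1,a}$ can be deformed so that the integrand carries at most $e^{\frac{x}{4}|\re\tilde\Phi_{21}(\tau,k)|}$; continuity up to $\bar V_1$ and analyticity in $V_1$ (part $(a)$) are then immediate from the integral representation.

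The main obstacle — and the reason this lemma is stated separately rather than cited — is the uniformity in $\tau$ down to $\tau=0$: one must control $\re\tilde\Phi_{21}(\tau,k)$ along $\partial V_1$ and interpolate the change of variables $\phi_\tau$ uniformly as the saddle points $k_1,k_2$ collide with $\pm i=\mp i\kappa_j$, i.e. as the saddles hit the endpoint $k_\star=-i$ of the ray. At $\tau=0$ the phase is purely imaginary on $i\R$, so the decay that powers the integration by parts must come from pushing slightly into $V_1$ (where $\re\tilde\Phi_{21}<0$), and one needs quantitative estimates — essentially that $\re\tilde\Phi_{21}(\tau,k)$ is bounded below by a positive multiple of $\mathrm{dist}(k,i\R)$ times a $\tau$-uniform modulus — to keep the constant $C$ in $(b)$ independent of $\tau$. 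I would handle this by an explicit analysis of $\tilde\Phi_{21}(\tau,k)=\frac{k^2-1}{2k}-\frac{k^4-1}{4k^2}\tau$ near $k=-i$, extracting the leading $\tau$-independent term $\frac{k^2-1}{2k}$, whose real part does have the required sign and growth on $\bar V_1\setminus i\R$, and absorbing the $O(\tau)$ correction harmlessly. The flatness of $r_1$ at $k_\star$ (so $\rho=r_1$ up to the trivially-handled $P_N\equiv 0$) simplifies the boundary-term bookkeeping considerably, so in fact the bottleneck is purely the $\tau$-uniform phase estimate just described.
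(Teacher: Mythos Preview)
There are two concrete issues. First, your claim that $r_1$ is flat at $k_\star=-i$ ``by symmetry'' is unfounded: Assumption~$(iv)$ gives $r_1\equiv 0$ on $[0,i]$, hence flatness at $+i$, but no symmetry of $r_1$ exchanges $i$ and $-i$, and the paper indeed subtracts a nontrivial rational $f_0$ matching $r_1$ to high order at $-i$. Second, and this is the main gap, your mechanism for the $x^{-N}$ decay---integration by parts against $e^{x\tilde\Phi_{21}(\tau,k)}$ using $\partial_k\tilde\Phi_{21}\ne 0$---degenerates non-uniformly: one computes $\partial_k\tilde\Phi_{21}(\tau,-i)=i\tau$, so each integration by parts produces a factor $(x\tau)^{-1}$ rather than $x^{-1}$ near the endpoint, and no amount of vanishing of $\rho$ at $-i$ (which you do not have anyway) repairs this. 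Your proposed fix of ``pushing into $V_1$ where $\re\tilde\Phi_{21}<0$'' is based on the wrong sign: on $\bar V_1$ one has $\re\tilde\Phi_{21}\ge 0$.

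The paper avoids all of this with a device you are missing: it uses the \emph{$\tau$-independent} phase variable $\phi=-i\tilde\Phi_{21}(0,k)=-\tfrac{i}{2}(k-k^{-1})$, which is a real, monotone bijection $(-i\infty,-i]\to(-\infty,-1]$, and applies the standard Deift--Zhou Fourier splitting in $\phi$ (not a Cauchy integral or $k$-space integration by parts). Concretely, with $f_1=r_1-f_0$ one sets $F(\phi)=\frac{k^{2M}}{(k+i)^M}f_1(k)$, takes $\hat F$ (which lies in $L^2$ with $\|s^{N+1}\hat F\|_{L^2}<\infty$ once $M$ is large enough so that $F\in H^{N+1}(\R)$), and cuts the inverse Fourier integral at $s=x/4$. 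The remainder $f_r$ is then $O(x^{-N-1/2})$ in every $L^p$ directly from this Sobolev bound, and the bound on $f_a$ naturally carries a factor $e^{\frac{x}{4}\re\tilde\Phi_{21}(0,k)}$. The $\tau$-uniformity in part~(b) then follows in one line from the elementary inequality $\re\tilde\Phi_{21}(\tau,k)\ge\re\tilde\Phi_{21}(0,k)\ge 0$ on $\bar V_1$, which upgrades $e^{\frac{x}{4}\re\tilde\Phi_{21}(0,k)}$ to $e^{\frac{x}{4}|\re\tilde\Phi_{21}(\tau,k)|}$ for free; no $\tau$-dependent coordinate $\phi_\tau$ and no saddle-collision analysis is needed.
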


\begin{remark}
We will later need to introduce yet another decomposition of $r_1$, see Lemma \ref{decompositionlemma2}. We use tildes on the functions $\tilde{r}_{1,a}$ and $\tilde{r}_{1,r}$ in Lemma \ref{decompositionlemma1} to distinguish them from the functions $r_{1,a}$ and $r_{1,r}$ in Lemma \ref{decompositionlemma2}. The functions $\tilde{r}_{1,a}$ and $r_{1,a}$ have partially overlapping domains of definition, but are in general not equal on the intersection of these domains. Since the derivations of Lemma \ref{decompositionlemma1} and Lemma \ref{decompositionlemma2} use different phase functions ($-i\tilde{\Phi}(0,k)$ and $-i\tilde{\Phi}(\tau,k)$, respectively), it seems difficult to construct $\tilde{r}_{1,a}$ in such a way that it coincides with $r_{1,a}$ on this intersection.
\end{remark}

\begin{proof}[Proof of Lemma \ref{decompositionlemma1}]
The proof follows the idea of \cite{DZ1993} but is nonstandard because the real part of $\tilde{\Phi}_{21}(\tau, k)$ is nonzero for $k \in (-i\infty, -i)$ whenever $\tau > 0$. We therefore present a proof, whose main idea is to use $-i\tilde{\Phi}_{21}(0, k)$ instead of $-i\tilde{\Phi}_{21}(\tau, k)$ as the phase function. 

Let $M \geq N + 1$ be a large integer.
There exists a rational function $f_0(k)$ with no poles in $\bar{V}_1$ such that $r_1$ and $f_0$ coincide to order $4M$ at $k = -i$ and such that $f_0(k) = O(k^{-4M})$ as $(-i\infty, -i] \ni k \to \infty$. Since $r_1 \in C^\infty((-i\infty,-i])$ has rapid decay as $k$ tends to $\infty$, $r_1$ and $f_0$ coincide to order $4M$ also at $k = \infty$.
Let $f_{1}(k) = r_1(k) - f_0(k)$. The map $k \mapsto \phi = \phi(k)$ where
\begin{align}\label{phidef}
 & \phi = -i\tilde{\Phi}_{21}(0, k) = -\frac{i}{2} \left(k-\frac{1}{k}\right) = \frac{1 + (\im k)^2}{2\im k}
\end{align}  
is a bijection  $(-i\infty, -i] \to (-\infty, -1]$, so we may define
\begin{align}\label{Fdef2}
F(\phi) := \begin{cases} \frac{k^{2M}}{(k-k_\star)^M} f_{1}(k), &  \phi \leq -1, \\
0, & \phi > -1,
\end{cases}
\end{align}
with $k_\star := -i$.
The function $F(\phi)$ is smooth for $\phi \in \R \setminus \{-1\}$. Also, for $n \geq 1$, 
\begin{align}\label{dnFdphin}
F^{(n)}(\phi) = \bigg(\frac{2ik^2}{(k-k_\star)(k-i)} \frac{d }{d k}\bigg)^n 
\bigg[\frac{k^{2M}}{(k-k_\star)^M} f_{1}(k)\bigg], \qquad \phi \leq -1,
\end{align}
so by choosing $M$ large enough, we can achieve that $F$ belongs to the Sobolev space $H^{N+1}(\R)$.
We conclude that 
\begin{align}\label{Fhatdef}
\hat{F}(s) = \frac{1}{2\pi} \int_{\R} F(\phi) e^{-i\phi s} d\phi, \qquad
F(\phi) =  \int_{\R} \hat{F}(s) e^{i\phi s} ds
\end{align}
and, by the Plancherel theorem, $\|s^{N+1} \hat{F}(s)\|_{L^2(\R)} = \|\frac{F^{(N+1)}(\phi)}{\sqrt{2\pi}}\|_{L^2(\R)} < \infty$.
By (\ref{Fdef2}) and (\ref{Fhatdef}),
$$ \frac{(k-k_\star)^M}{k^{2M}}\int_{\R} \hat{F}(s) e^{s \tilde{\Phi}_{21}(0, k)} ds 
= f_{1}(k)$$
for $k \in (-i\infty, -i]$. In particular, $f_{1}(k) = f_a(x,k) + f_r(x,k)$ for $x \geq 1$ and $k \in (-i\infty, -i]$, where
\begin{align*}
& f_a(x,k) := \frac{(k-k_\star)^M}{k^{2M}}\int_{-\infty}^{\frac{x}{4}} \hat{F}(s) e^{s\tilde{\Phi}_{21}(0, k)} ds, \qquad k \in \bar{V}_1,  
	\\
& f_r(x,k) := \frac{(k-k_\star)^M}{k^{2M}}\int_{\frac{x}{4}}^{+\infty} \hat{F}(s) e^{s\tilde{\Phi}_{21}(0, k)} ds,\qquad k \in (-i\infty, -i].
\end{align*}
Since $\re \tilde{\Phi}_{21}(0, k) = 0$ for $k \in (-i\infty, -i]$, we have
\begin{align}\nonumber
|f_r(x,k)| & \leq \frac{(k-k_\star)^M}{k^{2M}}  \int_{\frac{x}{4}}^{+\infty} s^{N+1} |\hat{F}(s)| s^{-N-1} ds
 	\\ \label{frest}
& \leq \frac{C}{1 + |k|^M}  \| s^{N+1} \hat{F}(s)\|_{L^2(\R)} \sqrt{\int_{\frac{x}{4}}^{+\infty} s^{-2N-2} ds}    \leq \frac{C}{1 + |k|^M} x^{-N-\frac{1}{2}}
\end{align}
for $x \geq 1$ and $k \in (-i\infty, -i]$. Hence the $L^1$ and $L^\infty$ norms of $f_r(x,\cdot)$ on $(-i\infty, -i]$ are $O(x^{-N-\frac{1}{2}})$. 
On the other hand, $f_a(x, \cdot)$ is clearly continuous on $\bar{V}_1$ and analytic in $V_1$. 
Since $\re \tilde{\Phi}_{21}(\tau, k) \geq \re \tilde{\Phi}_{21}(0, k) \geq 0$ for $k \in \bar{V}_1$ and $\tau \in \mathcal{I}$ (cf. Figure \ref{IIbis fig: Re Phi 21 31 and 32 for zeta=0.7}), it also holds that
\begin{align}\nonumber
 |f_a(x,k)| 
&\leq \frac{(k-k_\star)^M}{k^{2M}} \|\hat{F}\|_{L^1(\R)}  \sup_{s \leq \frac{x}{4}} e^{s \re \tilde{\Phi}_{21}(0, k)}
\leq C\frac{(k-k_\star)^M}{k^{2M}}  e^{\frac{x}{4} \re \tilde{\Phi}_{21}(0, k)} 
	\\ \label{faest}
& \leq C(k-k_\star)^M  e^{\frac{x}{4} \re \tilde{\Phi}_{21}(\tau, k)} 
\end{align}
for $x \geq 1$, $\tau \in \mathcal{I}$, and $k \in \bar{V}_1$.
Since $M \geq N+1$, it follows that the assignments
\begin{align*}
& \tilde{r}_{1,a}(x,k) := f_0(k) + f_a(x,k) & & \text{for} \quad k \in \bar{V}_1,
	\\
& \tilde{r}_{1,r}(x,k) := f_r(x,k)  & & \text{for} \quad  k \in (-i\infty, -i],
\end{align*}
yield a decomposition of $r_1$ with the desired properties. 
\end{proof}

Lemma \ref{decompositionlemma1} establishes a decomposition of $r_{1}$. The symmetry \eqref{r1r2 relation with kbar symmetry} then yields an analogous decomposition $r_{2}=\tilde{r}_{2,a}+\tilde{r}_{2,r}$ of $\tilde{r}_{2}$ as follows:
\begin{align*}
& \tilde{r}_{2,a}(x,k) := \tilde{r}(k)\overline{\tilde{r}_{1,a}(x,\bar{k}^{-1})}, \quad k \in V_{2};
& & \tilde{r}_{2,r}(x,k) := \tilde{r}(k)\overline{\tilde{r}_{1,r}(x,\bar{k}^{-1})}, \quad k \in (-i,0),
\end{align*}
where $V_{2}:=\{k|\bar{k}^{-1}\in V_{1}\}$.

We are now in a position to define the first transformation $n \mapsto n^{(1)}$. As explained in Section \ref{overviewsec}, we will focus our attention on the sector $\mathsf{S}$. Let $\Gamma^{(1)}$ be as in Figure \ref{Gamma1fig}. Note that 
\begin{align*}
\Gamma_{8}^{(1)}:= \{e^{i\theta} \,|\, \theta \in (\arg k_{1},\tfrac{2\pi}{3})\}, \qquad \Gamma_{2}^{(1)}:= \{e^{i\theta} \,|\, \theta \in (\tfrac{\pi}{3},\tfrac{\pi}{2})\}.
\end{align*}

\begin{figure}
\begin{center}
\begin{tikzpicture}[master,scale=0.9]
\node at (0,0) {};
\draw[black,line width=0.65 mm] (0,0)--(30:7.5);
\draw[black,line width=0.65 mm,->-=0.45,->-=0.91] (0,0)--(90:7);
\draw[black,line width=0.65 mm] (0,0)--(150:7.5);
\draw[dashed,black,line width=0.15 mm] (0,0)--(60:7.5);
\draw[dashed,black,line width=0.15 mm] (0,0)--(120:7.5);

\draw[black,line width=0.65 mm] ([shift=(30:3*1.5cm)]0,0) arc (30:150:3*1.5cm);
\draw[black,arrows={-Triangle[length=0.27cm,width=0.18cm]}]
($(73:3*1.5)$) --  ++(-15:0.001);

\node at (75.5:3.2*1.5) {\small $2$};

\node at (83.2:1.95*1.5) {\small $1_r$};

\node at (86.5:6.15) {\small $4_r$};

\draw[black,arrows={-Triangle[length=0.27cm,width=0.18cm]}]
($(90.5:3*1.5)$) --  ++(90.5-90:0.001);
\draw[black,arrows={-Triangle[length=0.27cm,width=0.18cm]}]
($(116-4:3*1.5)$) --  ++(113+90-4:0.001);
\node at (109:3.2*1.5) {\small $8$};
\node at (93:3.2*1.5) {\small $5$};

\draw[blue,fill] (96.9725:4.5) circle (0.12cm);
\draw[green,fill] (143.028:4.5) circle (0.12cm);
\node at (96:4.08) {\small $k_{1}$};
\node at (140:4.0) {\small $\omega^{2}k_2$};

\draw[black,line width=0.65 mm,->-=0.91] (90:4.5)--(70:6.25)--(69.5:7.5);
\draw[black,line width=0.65 mm,->-=0.45] (70:0)--(70:3.24)--(90:4.5);
\node at (79.5:2.0) {\small $1_a$};
\node at (72.7:6.9) {\small $4_a$};

\draw[black,line width=0.65 mm] (30:4.5)--(50:6.25)--(50.5:7.5);
\draw[black,line width=0.65 mm] (50:0)--(50:3.24)--(30:4.5);

\end{tikzpicture}
\end{center}
\begin{figuretext} \label{Gamma1fig}
The contour $\Gamma^{(1)}$ (solid), the boundary of $\mathsf{S}$ (dashed), and the saddle points $k_{1}$ (blue) and $\omega^{2}k_{2}$ (green).
\end{figuretext}
\end{figure}

The matrices $v_{1''}$ and $v_{4'}$ admit the factorizations
$$v_{1''} = v_{1_a}^{(1)}v_{1_r}^{(1)}, \qquad 
v_{4'} = v_{4_a}^{(1)}v_{4_r}^{(1)},$$
where
\begin{align}\nonumber
& v_{1_a}^{(1)} =
\begin{pmatrix}
1 & 0 & 0 \\
\tilde{r}_{1,a}(\frac{1}{k})e^{\theta_{21}} & 1 & 0 \\
0 & 0 & 1
\end{pmatrix}, && v_{1_r}^{(1)} =
\begin{pmatrix}
1 & 0 & 0 \\
\tilde{r}_{1,r}(\frac{1}{k})e^{\theta_{21}} & 1 & 0 \\
0 & 0 & 1
\end{pmatrix},
	\\\label{v1arv4ardef}
& v_{4_a}^{(1)} = \begin{pmatrix}
1 & -\tilde{r}_{2,a}(\frac{1}{k})e^{-\theta_{21}} & 0 \\
0 & 1 & 0 \\
0 & 0 & 1
\end{pmatrix}, &&
v_{4_r}^{(1)} = \begin{pmatrix}
1 & -\tilde{r}_{2,r}(\frac{1}{k})e^{-\theta_{21}} & 0 \\
0 & 1 & 0 \\
0 & 0 & 1
\end{pmatrix}.
\end{align}

The function $n^{(1)}$ is defined by
\begin{align}\label{n1def}
n^{(1)}(x,t,k) = n(x,t,k)G^{(1)}(x,t,k),\qquad k \in \C \setminus \Gamma^{(1)},
\end{align}
where the function $G^{(1)}$ is analytic in $\mathbb{C}\setminus \Gamma^{(1)}$. It is given for $k \in \mathsf{S}$ by
\begin{align}\label{G1def}
G^{(1)}  =  \begin{cases} 
v_{1_a}^{(1)}, & k \mbox{ on the $-$ side of }\Gamma_{1_r}^{(1)}, \\[0.1cm]
v_{4_a}^{(1)}, &  k \mbox{ on the $-$ side of }\Gamma_{4_r}^{(1)},  \\
I, & \mbox{otherwise},
\end{cases}
\end{align}
and extended to $\mathbb{C}\setminus \Gamma^{(1)}$ by means of the symmetry
\begin{align}\label{symmetry of G1}
G^{(1)}(x,t, k) = \mathcal{A} G^{(1)}(x,t,\omega k)\mathcal{A}^{-1}
 = \mathcal{B} G^{(1)}(x,t, k^{-1}) \mathcal{B}.
\end{align}
The next lemma follows from the signature tables of Figure \ref{IIbis fig: Re Phi 21 31 and 32 for zeta=0.7} and Lemma \ref{decompositionlemma1}.

\begin{lemma}\label{G1lemma}
$G^{(1)}(x,t,k)$ and $G^{(1)}(x,t,k)^{-1}$ are uniformly bounded for $k \in \mathbb{C}\setminus \Gamma^{(1)}$, $x \geq 1$, and $\tau \in \mathcal{I}$.
\end{lemma}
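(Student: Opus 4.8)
The plan is to verify boundedness of $G^{(1)}$ and $(G^{(1)})^{-1}$ by checking it separately on the three regions appearing in the definition \eqref{G1def} — the region where $G^{(1)} = v_{1_a}^{(1)}$, the region where $G^{(1)} = v_{4_a}^{(1)}$, and the region where $G^{(1)} = I$ — and then extending to all of $\mathbb{C}\setminus\Gamma^{(1)}$ via the symmetry \eqref{symmetry of G1}, using that the constant matrices $\mathcal{A}, \mathcal{B}$ and their inverses have bounded norm. On the third region there is nothing to prove. On the first region, $G^{(1)}$ is the unipotent lower-triangular matrix $v_{1_a}^{(1)}$, so both $G^{(1)}$ and its inverse are bounded precisely when the single entry $\tilde{r}_{1,a}(x,\tfrac1k)e^{\theta_{21}(x,t,k)}$ is bounded; analogously, on the second region everything reduces to bounding $\tilde{r}_{2,a}(x,\tfrac1k)e^{-\theta_{21}(x,t,k)}$.

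The key estimate is therefore the bound on $\tilde{r}_{1,a}(x,\tfrac1k)e^{\theta_{21}}$. Recalling $\theta_{21}(x,t,k) = x\,\tilde{\Phi}_{21}(\tau,k)$ and the identity $\tilde{\Phi}_{21}(\tau,\tfrac1k) = -\tilde{\Phi}_{21}(\tau,k)$ (from $l_j, z_j$ being odd under $k \mapsto k^{-1}$, equivalently the $\mathcal{B}$-symmetry), the relevant region is the part of $\mathsf{S}\setminus\Gamma^{(1)}$ lying on the minus side of $\Gamma_{1_r}^{(1)}$, which under $k\mapsto \tfrac1k$ maps into $\bar V_1$; there estimate \eqref{tilder1aatinfinity} of Lemma \ref{decompositionlemma1}\eqref{decompositionlemma1partb} gives
\begin{align*}
\big|\tilde{r}_{1,a}(x,\tfrac1k)\big| \leq \frac{C}{1+|k|^{-(N+1)}}\, e^{\frac{x}{4}|\re\tilde{\Phi}_{21}(\tau,\frac1k)|} = \frac{C}{1+|k|^{-(N+1)}}\, e^{\frac{x}{4}|\re\tilde{\Phi}_{21}(\tau,k)|},
\end{align*}
so that $\big|\tilde{r}_{1,a}(x,\tfrac1k)e^{\theta_{21}}\big| \leq C\, e^{x\,\re\tilde{\Phi}_{21}(\tau,k) + \frac{x}{4}|\re\tilde{\Phi}_{21}(\tau,k)|}$. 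The point of the contour deformation in Figure \ref{Gamma1fig} — together with the signature table in Figure \ref{IIbis fig: Re Phi 21 31 and 32 for zeta=0.7} — is exactly that the region where $G^{(1)} = v_{1_a}^{(1)}$ lies in $\{\re\tilde{\Phi}_{21}(\tau,k) \leq 0\}$ uniformly for $\tau\in\mathcal{I}$; one must confirm this geometric fact (the deformed contour stays inside the appropriate lobe of the signature table, and the merging of $k_1$ with $i$ as $\tau\to 0$ does not spoil it). Then $x\,\re\tilde{\Phi}_{21} + \tfrac{x}{4}|\re\tilde{\Phi}_{21}| = \tfrac{3x}{4}\re\tilde{\Phi}_{21} \leq 0$, giving the uniform bound. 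The bound for $\tilde{r}_{2,a}(x,\tfrac1k)e^{-\theta_{21}}$ follows the same way using $\tilde{r}_{2,a}(x,k) = \tilde{r}(k)\overline{\tilde{r}_{1,a}(x,\bar k^{-1})}$, the boundedness of $\tilde r(k)$ on the relevant (compact-after-deformation, pole-free) part of the contour, and the fact that the region where $G^{(1)} = v_{4_a}^{(1)}$ lies in $\{\re\tilde{\Phi}_{21}(\tau,k) \geq 0\}$.

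I expect the main obstacle to be the geometric verification rather than the analytic estimate: one has to be careful that the contours $\Gamma_{1_a}^{(1)}$ and $\Gamma_{4_a}^{(1)}$ (and the lenses they bound against $\Gamma_{1_r}^{(1)}$ and $\Gamma_{4_r}^{(1)}$) can be chosen so that, uniformly for $\tau\in\mathcal{I}=[0,\tau_{\max}]$ with $\tau_{\max}$ small enough, the "$v_{1_a}^{(1)}$" region stays in the closed set $\{\re\tilde{\Phi}_{21}\leq 0\}$ and the "$v_{4_a}^{(1)}$" region stays in $\{\re\tilde{\Phi}_{21}\geq 0\}$ — in particular that $\bar V_1$ as defined is consistent with $\{\re\tilde{\Phi}_{21}(\tau,\cdot)\geq 0\}$, which is precisely the inequality $\re\tilde{\Phi}_{21}(\tau,k)\geq\re\tilde{\Phi}_{21}(0,k)\geq 0$ for $k\in\bar V_1$ invoked in the proof of Lemma \ref{decompositionlemma1}. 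Given that, the remaining steps are: (i) restate the factorization $v_{1''} = v_{1_a}^{(1)}v_{1_r}^{(1)}$, $v_{4'} = v_{4_a}^{(1)}v_{4_r}^{(1)}$ and note $G^{(1)}$ is analytic across the contours where it equals $I$ on both sides; (ii) the three-region check above; (iii) the symmetry extension. None of these is computationally heavy once the signature-table geometry is pinned down.
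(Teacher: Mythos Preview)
Your proposal is correct and follows exactly the approach the paper indicates: the paper's proof is the single sentence ``follows from the signature tables of Figure~\ref{IIbis fig: Re Phi 21 31 and 32 for zeta=0.7} and Lemma~\ref{decompositionlemma1}'', and your argument is precisely the unpacking of that sentence --- using the estimate \eqref{tilder1aatinfinity} together with the sign of $\re\tilde{\Phi}_{21}$ on the lens regions (equivalently, the inequality $\re\tilde{\Phi}_{21}(\tau,k)\geq \re\tilde{\Phi}_{21}(0,k)\geq 0$ on $\bar V_1$ invoked in the proof of Lemma~\ref{decompositionlemma1}), and then the $\mathcal{A}$/$\mathcal{B}$-symmetry. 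One minor remark: the contours $\Gamma_{1_a}^{(1)},\Gamma_{4_a}^{(1)}$ and the lens regions are chosen independently of $\tau$ (they meet the unit circle at $i$, not at $k_1$), so the merging $k_1\to i$ plays no role here and your geometric concern is simpler than you feared.
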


Using \eqref{n1def}, we infer that $n^{(1)}_{+}=n^{(1)}_{-}v_{j}^{(1)}$ on $\Gamma^{(1)}_{j}$, where $v_{j}^{(1)}$ is given by \eqref{v1arv4ardef} for $j=1_a, 1_r, 4_a, 4_r$ and for $j = 2,5,8$ by
\begin{align}\nonumber
v_{2}^{(1)} = v_9, \qquad v_{5}^{(1)} = v_7^{-1}, \qquad v_{8}^{(1)} = v_7. 
\end{align}

The $L^1$ and $L^\infty$ norms of $v^{(1)} - I$ on $\Gamma_{1_r}^{(1)} \cup \Gamma_{4_r}^{(1)}$ are uniformly of order $O(x^{-N})$ in Sector I as a consequence of Lemma \ref{decompositionlemma1}. The estimate (\ref{tilder1aatinfinity}) of $\tilde{r}_{1,a}$ ensures that $n^{(1)}$ has the same behavior as $n$ as $k \to \infty$ up to terms of $O(k^{-N})$.

\section{The $n^{(1)} \to n^{(2)}$ transformation}\label{n1ton2sec}
To implement the transformation $n^{(1)} \to n^{(2)}$, we need another set of decompositions of the functions $r_{1},r_{2}$. We also need decompositions of the functions $\hat{r}_{1},\hat{r}_{2}$ defined for all $k \in \partial \D$ with $\arg k \in [\tfrac{\pi}{2},\arg k_{1}]$ by
\begin{align}\label{def of rhat}
& \hat{r}_{j}(k) := \frac{r_{j}(k)}{1+r_{1}(k)r_{2}(k)} = \frac{r_{j}(k)}{1+\tilde{r}(k)|r_{1}(k)|^{2}}, && j=1,2,
\end{align}
where the second equality follows from \eqref{r1r2 relation with kbar symmetry}. Since $\arg k_{1} \in [\frac{\pi}{2},\frac{2\pi}{3})$ for $\tau \in [0,1)$, and since $\tilde{r}(e^{i\theta})>0$ for $\theta \in [\frac{\pi}{2},\frac{2\pi}{3})$, we have $1+r_{1}(k)r_{2}(k)=1+\tilde{r}(k)|r_{1}(k)|^{2}>1$ for $|k|=1, \arg k \in [\tfrac{\pi}{2},\arg k_{1}]$. Thus $\hat{r}_{1}$, $\hat{r}_{2}$ are well-defined.
Given $K>1$, we define the open sets $U_1 = U_1(\tau,K)$ and $\hat{U}_1 = \hat{U}_1(\tau,K) \subset \C$ by (see Figure \ref{fig: U1 and U2})
\begin{align*}
U_{1} = & \; \big(\{k \,| \arg k \in [-\pi,-\tfrac{5\pi}{6}) \cup (\arg k_{1},\pi], \; K^{-1}<|k|<1 \}
	\\
& \; \cup
\{k \,| \arg k \in (-\tfrac{\pi}{2}, \tfrac{\pi}{3}), \; 1<|k|<K \}\big) \cap \{k  \,|\, \re \Phi_{21}>0\}, \\
\hat{U}_{1} = & \; \{k \,| \arg k \in (\tfrac{\pi}{4},\arg k_{1}), \; 1<|k|<K \} \cap \{k \,|\, \re \Phi_{21}>0\}.
\end{align*}

\begin{figure}
\begin{center}
\begin{tikzpicture}[master]
\node at (0,0) {\includegraphics[width=5cm]{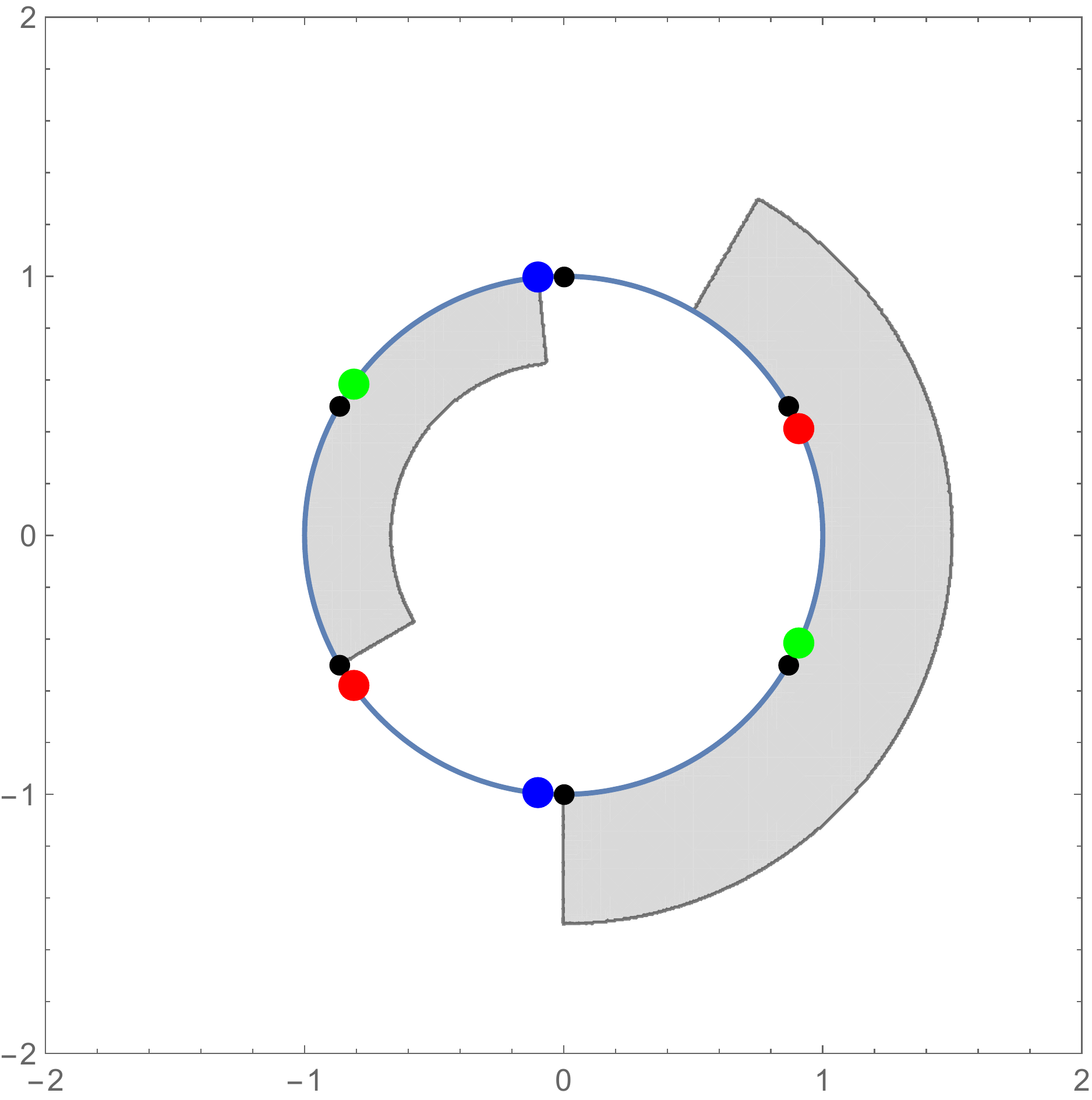}};
\node at (-0.87,0.25) {\tiny $U_{1}$};
\node at (1.2,-0.87) {\tiny $U_{1}$};
\end{tikzpicture} \hspace{0.1cm} \begin{tikzpicture}[slave]
\node at (0,0) {\includegraphics[width=5cm]{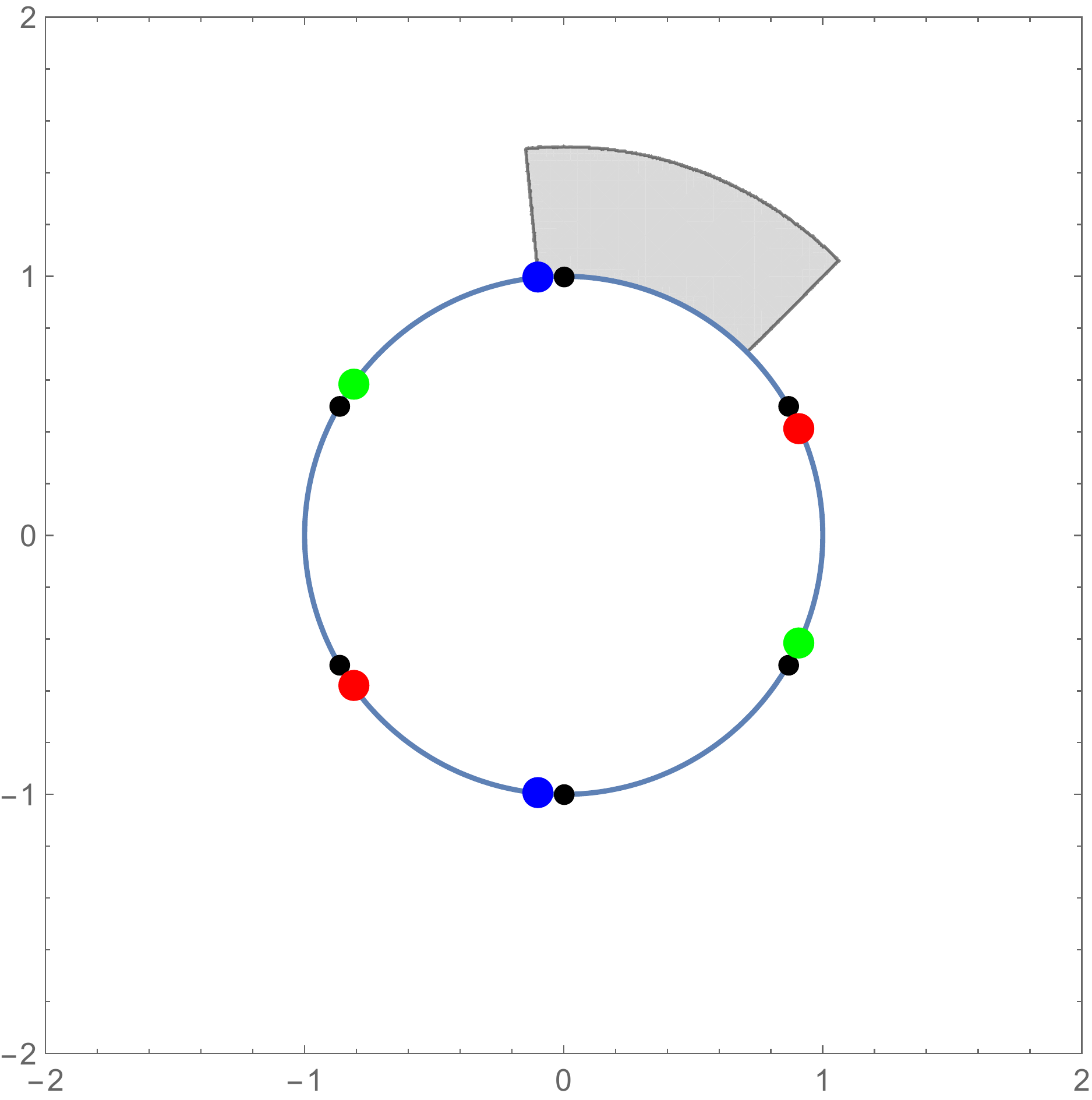}};
\node at (0.5,1.5) {\tiny $\hat{U}_{1}$};
\end{tikzpicture}
\end{center}
\begin{figuretext} \label{fig: U1 and U2} 
The open subsets $U_{1}$ and $\hat{U}_{1}$ of the complex $k$-plane.
\end{figuretext}
\end{figure}

\begin{lemma}[Decomposition lemma II]\label{decompositionlemma2}
For every integer $N \geq 1$, there exist $K>1$ and decompositions
\begin{align*}
& r_{1}(k) = r_{1,a}(x, t, k) + r_{1,r}(x, t, k), & & k \in \partial U_{1} \cap \partial \mathbb{D}, \\
& \hat{r}_{1}(k) = \hat{r}_{1,a}(x, t, k) + \hat{r}_{1,r}(x, t, k), & & k \in \partial \hat{U}_{1} \cap \partial \mathbb{D},
\end{align*}
such that $r_{1,a}, r_{1,r}, \hat{r}_{1,a}, \hat{r}_{1,r}$ satisfy the following properties:
\begin{enumerate}[$(a)$]
\item 
For each $x \geq 1$ and $\tau \in \mathcal{I}$, $r_{1,a}(x, t, k)$ is defined and continuous for $k \in \bar{U}_1$ and analytic for $k \in U_1$, while $\hat{r}_{1,a}(x, t, k)$ is defined and continuous for $k \in \bar{\hat{U}}_1$ and analytic for $k \in \hat{U}_1$.

\item For $x \geq 1$ and $\tau \in \mathcal{I}$, the functions $r_{1,a}$ and $\hat{r}_{1,a}$ obey
\begin{align*} 
& \Big| r_{1,a}(x, t, k)-\sum_{j=0}^{N}\frac{r_{1}^{(j)}(k_{\star})}{j!}(k-k_{\star})^{j}  \Big| \leq C |k-k_{\star}|^{N+1} e^{\frac{x}{4}|\re \tilde{\Phi}_{21}(\tau,k)|}, & & k \in \bar{U}_{1}, \, k_{\star} \in \mathcal{R}_{1}, 
	\\ 
& \Big| \hat{r}_{1,a}(x, t, k)-\sum_{j=0}^{N}\frac{\hat{r}_{1}^{(j)}(k_{\star})}{j!}(k-k_{\star})^{j} \Big|  \leq C |k-k_{\star}|^{N+1} e^{\frac{x}{4}|\re \tilde{\Phi}_{21}(\tau,k)|},  & & k \in \bar{\hat{U}}_{1}, \, k_{\star} \in \hat{\mathcal{R}}_{1}, 
\end{align*}
where $\mathcal{R}_{1}=\{k_{1},\pm \omega, \omega^{2}k_{2},\pm e^{\frac{\pi i}{6}},\pm e^{-\frac{\pi i}{6}},\pm 1,-i,\omega^{2}k_{1},\omega k_{2}\}$, $\hat{\mathcal{R}}_{1}=\{i,k_{1}\}$, and $C > 0$ is independent of $x, \tau, k$. In fact, for $k_\star = k_1$, the following stronger estimates hold: 
\begin{align*} 
& \Big| r_{1,a}(x, t, k)-\sum_{j=0}^{N}\frac{r_{1}^{(j)}(k_{\star})}{j!}(k-k_{\star})^{j}  \Big| \leq C_N(\tau) |k-k_{\star}|^{N+1} e^{\frac{x}{4}|\re \tilde{\Phi}_{21}(\tau,k)|}, & & k \in \bar{U}_{1}, 
	\\ 
& \Big| \hat{r}_{1,a}(x, t, k)-\sum_{j=0}^{N}\frac{\hat{r}_{1}^{(j)}(k_{\star})}{j!}(k-k_{\star})^{j} \Big| \leq C_N(\tau) |k-k_{\star}|^{N+1} e^{\frac{x}{4}|\re \tilde{\Phi}_{21}(\tau,k)|},  & & k \in \bar{\hat{U}}_{1}, 
\end{align*}
where $C_N(\tau) \geq 0$ is a smooth function of $\tau$ which is independent of $x, k$ and which vanishes to all orders at $\tau = 0$.

\item For each $1 \leq p \leq \infty$, the $L^p$-norm of $r_{1,r}(x,t,\cdot)$ on $\partial U_{1} \cap \partial \mathbb{D}$ is $O(x^{-N})$ and the $L^p$-norm of $\hat{r}_{1,r}(x,t,\cdot)$ on $\partial \hat{U}_{1} \cap \partial \mathbb{D}$ is $O(x^{-N})$ as $x \to \infty$ uniformly for $\tau \in \mathcal{I}$.
\end{enumerate}
\end{lemma}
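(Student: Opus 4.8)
The plan is to reduce Lemma~\ref{decompositionlemma2} to a Deift--Zhou type decomposition argument performed on each boundary arc separately, mimicking the structure of Lemma~\ref{decompositionlemma1} but now with the $\tau$-dependent phase $-i\tilde{\Phi}_{21}(\tau,k)$ in place of the frozen phase $-i\tilde{\Phi}_{21}(0,k)$. First I would fix $N$ and a large integer $M\geq N+1$. For the function $r_1$ on the arc $\partial U_1 \cap \partial \D$, I would subtract off a rational function $R(k)$ with no poles in $\bar U_1$ that matches the Taylor expansion of $r_1$ to order $4M$ at every point of the finite set $\mathcal R_1$ and decays like $O(k^{-4M})$ at the relevant endpoints; since $r_1 \in C^\infty(\hat\Gamma_1)$ with rapid decay, the remainder $f_1 = r_1 - R$ is smooth and flat to high order at each point of $\mathcal R_1$. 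One then uses $\phi = -i\tilde\Phi_{21}(\tau,k)$ as a real coordinate along the arc (after splitting $\partial U_1\cap\partial\D$ into subarcs on which this map is a diffeomorphism onto an interval, which is possible away from the saddle points where $\partial_k\tilde\Phi_{21}$ vanishes), pulls back the weighted remainder to a function $F(\phi)$, checks via the chain rule (as in \eqref{dnFdphin}) that choosing $M$ large forces $F \in H^{N+1}$, and splits the inverse Fourier integral at $s = x/4$ into an analytic part $f_a$ (integral over $s\leq x/4$, which extends holomorphically into $U_1$ because $\re\tilde\Phi_{21}>0$ there by the very definition of $U_1$) and a remainder part $f_r$ (integral over $s\geq x/4$, which is $O(x^{-N})$ in every $L^p$ by the Plancherel/Cauchy--Schwarz estimate of \eqref{frest} using $\re\tilde\Phi_{21}=0$ on the arc). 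Setting $r_{1,a} = R + f_a$, $r_{1,r} = f_r$ gives properties $(a)$--$(c)$ for $r_1$.

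For $\hat r_1$ I would repeat the same scheme on the smaller arc $\partial\hat U_1 \cap \partial\D$, the only new input being that $\hat r_1 = r_1/(1+\tilde r|r_1|^2)$ is smooth there because the denominator is bounded below by $1$ on $\arg k \in[\tfrac\pi2,\arg k_1]$ (as noted just before the lemma), and matching the Taylor data at the two points $\hat{\mathcal R}_1 = \{i,k_1\}$. The decay at $k=i$ is automatic since $r_1$ vanishes to all orders there, which also feeds into the stronger $C_N(\tau)$-estimates: near $k_\star = k_1$ (which tends to $i$ as $\tau\to 0$) every Taylor coefficient $r_1^{(j)}(k_1)$ is a smooth function of $\tau$ that vanishes to all orders at $\tau=0$ because $r_1$ is flat at $i$, and one can track this flatness through the construction to produce the prefactor $C_N(\tau)$. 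The same symmetry device as after Lemma~\ref{decompositionlemma1}, namely \eqref{r1r2 relation with kbar symmetry}, then yields the corresponding decompositions of $r_2$ and $\hat r_2$ without extra work, so I would only state those.

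The main obstacle I anticipate is uniformity in $\tau$ near $\tau = 0$ combined with the behavior near the saddle point $k_1$. When $\tau\to 0$ the saddle $k_1 \to i$, so the arc of $\partial U_1\cap\partial\D$ and the point $k_1$ at which we demand high-order tangency collide with the ``flat point'' $i$ of $r_1$; one must check that the rational interpolant $R(k)$ and the pulled-back function $F(\phi)$ can be chosen with bounds uniform in $\tau\in\mathcal I$, and that the diffeomorphism $k\mapsto\phi=-i\tilde\Phi_{21}(\tau,k)$ on the relevant subarcs has derivative bounded away from zero uniformly in $\tau$ (it degenerates precisely at the saddle, so the subarcs must be taken to shrink away from $k_1$ at a controlled rate, or one absorbs the degeneracy into the endpoint matching). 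A secondary technical point is the presence of the simple pole of $r_2$ at $\pm\omega^2$ on $\hat\Gamma_4$, which forces the rational correction in the $r_2$-decomposition to carry that pole; but since $\pm\omega^2 \notin \bar U_2$ this only affects bookkeeping, not the estimates. Once these uniformity issues are handled exactly as the $\tau=0$ case handled the loss of decay of the exponentials, the rest of the proof is the routine Fourier-analytic estimate already carried out in Lemma~\ref{decompositionlemma1}.
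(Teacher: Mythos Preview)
Your proposal is correct and follows essentially the same approach as the paper. The paper's proof is a two-sentence sketch: it observes that on each arc of $\partial U_1\cap\partial\D$ and $\partial\hat U_1\cap\partial\D$ the map $\theta\mapsto -i\tilde\Phi_{21}(\tau,e^{i\theta})=(1-\tau\cos\theta)\sin\theta$ is real-valued and monotone, and then invokes the Deift--Zhou method of \cite{DZ1993} together with the argument of Lemma~\ref{decompositionlemma1}; you have simply spelled out what that invocation entails. Your concern about the diffeomorphism degenerating at the saddle $k_1$ is handled the standard way (absorbing the degeneracy into the high-order vanishing of the rational correction at $k_1$, exactly as the factor $(k-k_\star)^M$ does in \eqref{dnFdphin}), and the uniformity in $\tau$ follows because the phase depends smoothly on $\tau$ and the monotonicity is strict on the interior of each arc for every $\tau\in\mathcal I$.
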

\begin{proof}
On each arc of $\partial U_{1} \cap \partial \mathbb{D}$ and $\partial \hat{U}_{1} \cap \partial \mathbb{D}$, the function $\theta \mapsto -i \tilde{\Phi}_{21}(\tau,e^{i\theta})=(1-\tau \cos \theta)\sin \theta$ is real-valued and monotone. The statement therefore follows using the method introduced in \cite{DZ1993}; see also the proof of Lemma \ref{decompositionlemma1}.
\end{proof}

Lemma \ref{decompositionlemma2} provides decompositions of $r_{1}$ and $\hat{r}_{1}$; using the symmetry \eqref{r1r2 relation with kbar symmetry}, we now deduce decompositions of $r_{2}$ and $\hat{r}_{2}$. Let $U_{2}:=\{k|\bar{k}^{-1}\in U_{1}\}$ and $\hat{U}_{2}:=\{k|\bar{k}^{-1}\in \hat{U}_{1}\}$. We define decompositions $r_{2}=r_{2,a}+r_{2,r}$ and $\hat{r}_{2}=\hat{r}_{2,a}+\hat{r}_{2,r}$ by
\begin{align*}
& r_{2,a}(k) := \tilde{r}(k)\overline{r_{1,a}(\bar{k}^{-1})}, \quad k \in U_{2},
& & r_{2,r}(k) := \tilde{r}(k)\overline{r_{1,r}(\bar{k}^{-1})}, \quad k \in \partial U_{2}\cap \partial \mathbb{D},
	\\
& \hat{r}_{2,a}(k) := \tilde{r}(k)\overline{\hat{r}_{1,a}(\bar{k}^{-1})}, \quad k \in \hat{U}_{2},
& & \hat{r}_{2,r}(k) := \tilde{r}(k)\overline{\hat{r}_{1,r}(\bar{k}^{-1})}, \quad k \in \partial \hat{U}_{2}\cap \partial \mathbb{D},
\end{align*}
where we have omitted the $(x,t)$-dependence for conciseness.

On $\Gamma_{2}^{(2)}$, we factorize $v_{2}^{(1)}$ as follows:
\begin{align*}
v_{2}^{(1)} = v_{3}^{(2)}v_{2}^{(2)}v_{1}^{(2)},
\end{align*}
where
\begin{align}
& v_{3}^{(2)} = \begin{pmatrix}
1 & 0 & -r_{2,a}(\omega^{2}k)e^{-\theta_{31}} \\
r_{1,a}(\frac{1}{k})e^{\theta_{21}} & 1 & r_{1,a}(\omega k)e^{-\theta_{32}} \\
0 & 0 & 1
\end{pmatrix}, \; v_{1}^{(2)} = \begin{pmatrix}
1 & r_{2,a}(\frac{1}{k})e^{-\theta_{21}} & 0 \\
0 & 1 & 0  \\
-r_{1,a}(\omega^{2}k)e^{\theta_{31}} & r_{2,a}(\omega k)e^{\theta_{32}} & 1
\end{pmatrix}, \nonumber \\
& v_{2}^{(2)} = I + v_{2,r}^{(2)}, \qquad v_{2,r}^{(2)}= \begin{pmatrix}
r_{1,r}(\omega^{2}k)r_{2,r}(\omega^{2}k) & g_{2}(\omega k)e^{-\theta_{21}} & -r_{2,r}(\omega^{2}k)e^{-\theta_{31}} \\
g_{1}(\omega k)e^{\theta_{21}} & g(\omega k) & h_{1}(\omega k)e^{-\theta_{32}} \\
-r_{1,r}(\omega^{2}k)e^{\theta_{31}} & h_{2}(\omega k)e^{\theta_{32}} & 0
\end{pmatrix}, \label{vp1p 123}
\end{align}
and
\begin{align*}
h_{1}(k) = & \; r_{1,r}(k) + r_{1,a}(\tfrac{1}{\omega^{2}k})r_{2,r}(\omega k), \qquad h_{2}(k) =  r_{2,r}(k) + r_{2,a}(\tfrac{1}{\omega^{2}k})r_{1,r}(\omega k), \\
g_{1}(k) = & \; r_{1,r}(\tfrac{1}{\omega^{2}k})-r_{1,r}(\omega k) \big( r_{1,r}(k)+r_{1,a}(\tfrac{1}{\omega^{2}k})r_{2,r}(\omega k) \big), \\
g_{2}(k) = & \; r_{2,r}(\tfrac{1}{\omega^{2}k})-r_{2,r}(\omega k) \big( r_{2,r}(k)+r_{2,a}(\tfrac{1}{\omega^{2}k})r_{1,r}(\omega k) \big), \\
g(k) = & \; r_{1,r}(k)\big(r_{1,r}(\omega k)r_{2,a}(\tfrac{1}{\omega^{2}k})+r_{2,r}(k)\big) \\
& +r_{1,a}(\tfrac{1}{\omega^{2}k})r_{2,r}(\omega k)\big( r_{1,r}(\omega k)r_{2,a}(\tfrac{1}{\omega^{2}k})+r_{2,r}(k) \big) + r_{1,r}(\tfrac{1}{\omega^{2}k})r_{2,r}(\tfrac{1}{\omega^{2}k}).
\end{align*}
On $\Gamma_{5}^{(2)}$, we factorize $v_{5}^{(1)}$ as follows:
\begin{align}
& v_{5}^{(1)} = v_{6}^{(2)}v_{5}^{(2)}v_{4}^{(2)}, \qquad v_{5}^{(2)} = \begin{pmatrix}
1+r_{1}(k)r_{2}(k) & 0 & 0 \\
0 & \frac{1}{1+r_{1}(k)r_{2}(k)} & 0 \\
0 & 0 & 1
\end{pmatrix} + v_{5,r}^{(2)}, \nonumber \\
& v_{6}^{(2)} \hspace{-0.07cm} = \hspace{-0.07cm} \begin{pmatrix}
1 & 0 & r_{1,a}(\frac{1}{\omega^{2} k})e^{-\theta_{31}} \\[0.08cm]
\hat{r}_{2,a}(k)e^{\theta_{21}} & 1 & -r_{2,a}(\frac{1}{\omega k})e^{-\theta_{32}} \\
0 & 0 & 1
\end{pmatrix}, \; v_{4}^{(2)} \hspace{-0.07cm} = \hspace{-0.07cm} \begin{pmatrix}
1 & \hspace{-0.07cm} \hat{r}_{1,a}(k)e^{-\theta_{21}} & \hspace{-0.07cm} 0 \\
0 & \hspace{-0.07cm} 1 & \hspace{-0.07cm} 0 \\
r_{2,a}(\tfrac{1}{\omega^{2} k})e^{\theta_{31}} & \hspace{-0.07cm} -r_{1,a}(\tfrac{1}{\omega k})e^{\theta_{32}} & \hspace{-0.07cm} 1
\end{pmatrix}\hspace{-0.07cm}, \label{vp1p 789}
\end{align}
where we have used \eqref{r1r2 relation on the unit circle}. Finally, on $\Gamma_{8}^{(2)}$, we factorize $v_{8}^{(1)}$ as follows:
\begin{align}
& v_{8}^{(1)} = v_{7}^{(2)}v_{8}^{(2)}v_{9}^{(2)}, \quad v_{8}^{(2)} = I + v_{8,r}^{(2)}, \quad v_{8,r}^{(2)} = \begin{pmatrix}
0 & -r_{1,r}(k)e^{-\theta_{21}} & h_{2}(\omega^{2}k)e^{-\theta_{31}} \\
-r_{2,r}(k)e^{\theta_{21}} & r_{1,r}(k)r_{2,r}(k) & g_{2}(\omega^{2}k)e^{-\theta_{32}} \\
h_{1}(\omega^{2}k)e^{\theta_{31}} & g_{1}(\omega^{2}k)e^{\theta_{32}} & g(\omega^{2}k)
\end{pmatrix} \nonumber \\
& v_{7}^{(2)} = \begin{pmatrix}
1 & 0 & 0 \\
-r_{2,a}(k)e^{\theta_{21}} & 1 & 0 \\
r_{1,a}(\omega^{2}k)e^{\theta_{31}} & r_{1,a}(\frac{1}{\omega k})e^{\theta_{32}} & 1
\end{pmatrix}, \qquad v_{9}^{(2)} = \begin{pmatrix}
1 & -r_{1,a}(k)e^{-\theta_{21}} & r_{2,a}(\omega^{2}k)e^{-\theta_{31}} \\
0 & 1 & r_{2,a}(\frac{1}{\omega k})e^{-\theta_{32}} \\
0 & 0 & 1 
\end{pmatrix}. \label{vp1p 101112}
\end{align}
We do not write down the long expression for $v_{5,r}^{(2)}$, but note that Lemma \ref{decompositionlemma2} yields
\begin{align*}
\| v_{j,r}^{(2)} \|_{(L^{1}\cap L^{\infty})(\Gamma_{j}^{(2)})} = O(x^{-N}) \qquad \mbox{as } x \to \infty, \; j= 2,5,8.
\end{align*}

\begin{figure}
\begin{center}
\begin{tikzpicture}[master,scale=1.1]
\node at (0,0) {};
\draw[black,line width=0.65 mm] (0,0)--(30:7.5);

\draw[black,line width=0.65 mm,->-=0.35,->-=0.59,->-=0.685,->-=0.88] (0,0)--(90:7.5);
\node at (83.8:1.62*1.5) {\scriptsize $1_r$};
\node at (87:2.7*1.5) {\scriptsize $2_{r}$};
\node at (86.9:3.3*1.5) {\scriptsize $3_{r}$};
\node at (87.5:4.28*1.5) {\scriptsize $4_{r}$};

\draw[black,line width=0.65 mm] (0,0)--(150:7.5);
\draw[dashed,black,line width=0.15 mm] (0,0)--(60:7.5);
\draw[dashed,black,line width=0.15 mm] (0,0)--(120:7.5);

\draw[black,line width=0.65 mm] ([shift=(30:3*1.5cm)]0,0) arc (30:150:3*1.5cm);
\draw[black,arrows={-Triangle[length=0.27cm,width=0.18cm]}]($(73:3*1.5)$) --  ++(-17:0.001);

\draw[black,line width=0.65 mm] ([shift=(30:3*1.5cm)]0,0) arc (30:150:3*1.5cm);

\draw[black,line width=0.65 mm,-<-=0.64] ([shift=(50:6.25cm)]0,0) arc (50:70:6.25cm);
\node at (65:4.35*1.5) {\scriptsize $1$};

\node at (75:3.16*1.5) {\scriptsize $2$};

\draw[black,line width=0.65 mm,-<-=0.57] ([shift=(50:3.24cm)]0,0) arc (50:70:3.24cm);
\node at (65:2.32*1.5) {\scriptsize $3$};

\node at (101:3.45*1.5) {\scriptsize $7$};
\node at (103:2.55*1.5) {\scriptsize $9$};

\node at (94.6:3.42*1.5) {\scriptsize $4$};
\node at (94.5:2.63*1.5) {\scriptsize $6$};

\draw[black,line width=0.65 mm,-<-=0.08,->-=0.86] (90:3.9)--(96.9725:3*1.5)--(90:5.3);

\draw[black,line width=0.65 mm,-<-=0.22,->-=0.6] (120:3.24)--($(96.9725:3*1.5)+(96.9725+135:1.1)$)--(96.9725:3*1.5)--($(96.9725:3*1.5)+(96.9725+45:1.1)$)--(120:6.25);
\draw[black,line width=0.65 mm] (120:3.24)--($(143.028:3*1.5)+(143.028-135:1.1)$)--(143.028:3*1.5)--($(143.028:3*1.5)+(143.028-45:1.1)$)--(120:6.25);

\draw[black,line width=0.65 mm] (150:3.9)--(143.028:3*1.5)--(150:5.3);

\draw[black,arrows={-Triangle[length=0.27cm,width=0.18cm]}]
($(90.5:3*1.5)$) --  ++(90.5-90:0.001);
\draw[black,arrows={-Triangle[length=0.27cm,width=0.18cm]}]
($(116-4:3*1.5)$) --  ++(113+90-4:0.001);
\node at (109:3.15*1.5) {\scriptsize $8$};
\node at (91.8:3.12*1.5) {\scriptsize $5$};

\draw[blue,fill] (96.9725:4.5) circle (0.12cm);
\draw[green,fill] (143.028:4.5) circle (0.12cm);
\node at (97:4.19) {\scriptsize $k_{1}$};
\node at (141:4.03) {\scriptsize $\omega^{2}k_2$};

\draw[black,line width=0.65 mm,->-=0.6] (60:4.5)--(60:6.25);
\draw[black,line width=0.65 mm,->-=0.75] (60:3.24)--(60:4.5);
\draw[black,line width=0.65 mm,->-=0.6] (120:4.5)--(120:6.25);
\draw[black,line width=0.65 mm,->-=0.75] (120:3.24)--(120:4.5);

\node at (122.7:5.35) {\scriptsize $1_{s}$};
\node at (123.5:4.0) {\scriptsize $2_{s}$};
\node at (56.8:5.35) {\scriptsize $3_{s}$};
\node at (56.3:4.0) {\scriptsize $4_{s}$};

\draw[black,line width=0.65 mm,->-=0.43] (90:4.5)--(70:6.25)--(69.5:7.5);
\draw[black,line width=0.65 mm,->-=0.83] (70:0)--(70:3.24)--(90:4.5);
\node at (77:3.9) {\scriptsize $1_a$};
\node at (74.5:5.3) {\scriptsize $4_a$};

\draw[black,line width=0.65 mm] (30:4.5)--(50:6.25)--(50.5:7.5);
\draw[black,line width=0.65 mm] (50:0)--(50:3.24)--(30:4.5);

\end{tikzpicture}
\end{center}
\begin{figuretext}\label{Gamma2fig}
The contour $\Gamma^{(2)}$ (solid), the boundary of $\mathsf{S}$ (dashed), and the saddle points $k_{1}$ (blue) and $\omega^{2}k_{2}$ (green).
\end{figuretext}
\end{figure}

Let $\Gamma^{(2)}$ be the contour shown in Figure \ref{Gamma2fig}. The function $n^{(2)}$ is defined by
\begin{align}\label{n2def}
n^{(2)}(x,t,k) = n^{(1)}(x,t,k)G^{(2)}(x,t,k),\qquad k \in \C \setminus \Gamma^{(2)},
\end{align}
where $G^{(2)}$ is analytic in $\mathbb{C}\setminus \Gamma^{(2)}$. It is defined for $k \in \mathsf{S}$ by
\begin{align}\label{G2def}
\hspace{-0.1cm} G^{(2)} \hspace{-0.1cm} = \hspace{-0.1cm} \begin{cases} 
v_{3}^{(2)}, & \hspace{-0.3cm} k \mbox{ on the $-$ side of }\Gamma_{2}^{(2)}, \\
(v_{1}^{(2)})^{-1}, & \hspace{-0.3cm} k \mbox{ on the $+$ side of }\Gamma_{2}^{(2)},  \\
v_{6}^{(2)}, & \hspace{-0.3cm} k \mbox{ on the $-$ side of }\Gamma_{5}^{(2)}, \\
(v_{4}^{(2)})^{-1}, & \hspace{-0.3cm} k \mbox{ on the $+$ side of }\Gamma_{5}^{(2)},
\end{cases} \; G^{(2)} \hspace{-0.1cm} = \hspace{-0.1cm} \begin{cases} 
v_{7}^{(2)}, & \hspace{-0.3cm} k \mbox{ on the $-$ side of }\Gamma_{8}^{(2)}, \\
(v_{9}^{(2)})^{-1}, & \hspace{-0.3cm} k \mbox{ on the $+$ side of }\Gamma_{8}^{(2)},  \\
I, & \hspace{-0.3cm} \mbox{otherwise},
\end{cases}
\end{align}
and extended to $\mathbb{C}\setminus \Gamma^{(2)}$ by means of the symmetries
\begin{align}\label{symmetry of G2}
G^{(2)}(x,t, k) = \mathcal{A} G^{(2)}(x,t,\omega k)\mathcal{A}^{-1}
 = \mathcal{B} G^{(2)}(x,t, k^{-1}) \mathcal{B}.
\end{align}
The next lemma follows from Lemma \ref{decompositionlemma2} and the signature tables of Figure \ref{IIbis fig: Re Phi 21 31 and 32 for zeta=0.7}.

\begin{lemma}\label{lemma:G1p1p}
$G^{(2)}(x,t,k)$ and $G^{(2)}(x,t,k)^{-1}$ are uniformly bounded for $k \in \mathbb{C}\setminus \Gamma^{(2)}$, $x \geq 1$, and $\tau \in \mathcal{I}$. Furthermore, $G^{(2)}(x,t,k)=I$ for all large enough $|k|$.
\end{lemma}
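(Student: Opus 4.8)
The plan is to reduce the statement to a finite collection of elementary estimates on the off-diagonal entries of the six matrices occurring in \eqref{G2def}. First, by the symmetry \eqref{symmetry of G2}, $G^{(2)}$ is recovered from its restriction to $\mathsf{S}$ by conjugating with the fixed invertible matrices $\mathcal{A},\mathcal{B}$ of \eqref{def of Acal and Bcal}, and the images of $\mathsf{S}$ under the six maps generated by $k\mapsto\omega k$ and $k\mapsto k^{-1}$ cover $\C$; so it suffices to bound $G^{(2)}$ and $(G^{(2)})^{-1}$ for $k\in\mathsf{S}\setminus\Gamma^{(2)}$. There $G^{(2)}$ equals, on the relevant bounded lens-shaped pieces, one of $v_{3}^{(2)},(v_{1}^{(2)})^{-1},v_{6}^{(2)},(v_{4}^{(2)})^{-1},v_{7}^{(2)},(v_{9}^{(2)})^{-1}$ from \eqref{vp1p 123}, \eqref{vp1p 789}, \eqref{vp1p 101112}, and $I$ elsewhere. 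Each of $v_{1}^{(2)},\dots,v_{9}^{(2)}$ has all diagonal entries equal to $1$ and satisfies $(M-I)^{3}=0$, so $M^{-1}=I-(M-I)+(M-I)^{2}$ has entries that are polynomials in the entries of $M$; hence it is enough to prove that every off-diagonal entry of these six matrices is uniformly bounded for $x\ge1$, $\tau\in\mathcal{I}$, and $k$ in the piece of $\mathsf{S}$ where $G^{(2)}$ is defined to equal that matrix.

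Next I would observe that each such entry is of the form $\rho\big(\psi(k)\big)e^{\pm\theta_{ij}(x,t,k)}$ with $\rho\in\{r_{1,a},r_{2,a},\hat r_{1,a},\hat r_{2,a}\}$, $\psi$ one of $k\mapsto k,k^{-1},\omega k,\omega^{2}k,\tfrac{1}{\omega k},\tfrac{1}{\omega^{2}k}$, and $1\le j<i\le3$. From \eqref{ljdef}, \eqref{zjdef} one obtains the identities $l_{j}(k^{-1})=l_{3-j}(k)$, $z_{j}(k^{-1})=z_{3-j}(k)$, hence $\tilde\Phi_{21}(\tau,k^{-1})=-\tilde\Phi_{21}(\tau,k)$ and $\overline{\tilde\Phi_{21}(\tau,\bar k)}=\tilde\Phi_{21}(\tau,k)$, which together with $\tilde\Phi_{31}(\tau,k)=-\tilde\Phi_{21}(\tau,\omega^{2}k)$ and $\tilde\Phi_{32}(\tau,k)=\tilde\Phi_{21}(\tau,\omega k)$ describe how all three phases transform under the maps $\psi$. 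Combining these with the bound $|r_{1,a}(x,t,\kappa)|\le Ce^{\frac{x}{4}|\re\tilde\Phi_{21}(\tau,\kappa)|}$ — which follows from Lemma \ref{decompositionlemma2}(b) and the boundedness of the Taylor polynomials on the bounded sets $\bar U_{1},\bar{\hat U}_{1}$ (note $e^{\frac x4|\re\tilde\Phi_{21}|}\ge1$) — together with the analogues for $r_{2,a},\hat r_{2,a}$ obtained from $r_{2,a}(k)=\tilde r(k)\overline{r_{1,a}(\bar k^{-1})}$, $\hat r_{2,a}(k)=\tilde r(k)\overline{\hat r_{1,a}(\bar k^{-1})}$ and the boundedness of $\tilde r$ on the relevant arcs, I would deduce in every case the key estimate
\begin{align}\label{G2entrybound}
 \big|\rho\big(\psi(k)\big)e^{\pm\theta_{ij}(x,t,k)}\big|\le C\,e^{\frac{x}{4}|\re\tilde\Phi_{ij}(\tau,k)|\,\pm\,x\re\tilde\Phi_{ij}(\tau,k)}.
\end{align}
The point is that the index permutations induced by the maps $\psi$ ensure that the growth permitted for $\rho(\psi(k))$ is always governed by the \emph{same} phase $\tilde\Phi_{ij}$ as the companion exponential $e^{\pm\theta_{ij}}$. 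Consequently, whenever $\re\tilde\Phi_{ij}(\tau,k)$ has the sign making $|e^{\pm\theta_{ij}}|=e^{-x|\re\tilde\Phi_{ij}(\tau,k)|}$, the right-hand side of \eqref{G2entrybound} equals $Ce^{-\frac34 x|\re\tilde\Phi_{ij}(\tau,k)|}\le C$; the fraction $\tfrac14<1$ baked into the decomposition lemmas is exactly what makes this work.

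It then remains to verify the sign condition case by case. On the arcs $\Gamma_{2}^{(2)},\Gamma_{5}^{(2)},\Gamma_{8}^{(2)}$ of $\partial\D$ one has $\re\tilde\Phi_{ij}\equiv0$ for all $1\le j<i\le3$ (since $\omega\,\partial\D=\partial\D$ and $-i\tilde\Phi_{21}(\tau,e^{i\vartheta})=(1-\tau\cos\vartheta)\sin\vartheta\in\R$), so on the arcs themselves every entry is trivially $O(1)$. Away from $\partial\D$, $G^{(2)}$ differs from $I$ only inside the bounded lens-shaped regions bounded by $\Gamma_{j}^{(2)}$ and the adjacent contours $\Gamma_{1_{s}}^{(2)},\dots,\Gamma_{4_{s}}^{(2)}$ and the local contours around $k_{1}$ and $\omega^{2}k_{2}$ of Figure \ref{Gamma2fig}; these are drawn, guided by the signature tables of Figure \ref{IIbis fig: Re Phi 21 31 and 32 for zeta=0.7}, so that on the $-$ (resp.\ $+$) side of $\Gamma_{j}^{(2)}$ one has $\re\tilde\Phi_{21}\le0$ (resp.\ $\ge0$), with the analogous statements for $\tilde\Phi_{31},\tilde\Phi_{32}$ obtained via $\tilde\Phi_{31}(\tau,k)=-\tilde\Phi_{21}(\tau,\omega^{2}k)$, $\tilde\Phi_{32}(\tau,k)=\tilde\Phi_{21}(\tau,\omega k)$, and also so that the arguments $\psi(k)$ remain in the domains $\bar U_{1},\bar U_{2},\bar{\hat U}_{1},\bar{\hat U}_{2}$ of the decompositions of Lemma \ref{decompositionlemma2}. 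Checking the exponents occurring in \eqref{vp1p 123}, \eqref{vp1p 789}, \eqref{vp1p 101112} against these signs — for instance $r_{1,a}(\tfrac1k)e^{\theta_{21}}$, $r_{1,a}(\omega k)e^{-\theta_{32}}$, $-r_{2,a}(\omega^{2}k)e^{-\theta_{31}}$ in $v_{3}^{(2)}$ on the $-$ side of $\Gamma_{2}^{(2)}$, or $\hat r_{1,a}(k)e^{-\theta_{21}}$, $r_{2,a}(\tfrac{1}{\omega^{2}k})e^{\theta_{31}}$, $-r_{1,a}(\tfrac{1}{\omega k})e^{\theta_{32}}$ in $v_{4}^{(2)}$ on the $+$ side of $\Gamma_{5}^{(2)}$, and so on — and invoking \eqref{G2entrybound} shows that every off-diagonal entry is $\le C$; the symmetries \eqref{symmetry of G2}, which interchange the roles of the three arcs, reduce the number of genuinely distinct verifications to a handful. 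I expect this sign bookkeeping to be the main (albeit routine) obstacle: one must make sure the lens contours are placed so that the growth rate $e^{\frac x4|\re\tilde\Phi_{ij}|}$ of each analytic-continuation factor is always beaten by the decay of its companion exponential.

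Finally, since $\Gamma_{2}^{(2)},\Gamma_{5}^{(2)},\Gamma_{8}^{(2)}$ lie on $\partial\D$ and the lens regions around them stay in a bounded neighbourhood of $\partial\D$ inside $\mathsf{S}$, \eqref{G2def} gives $G^{(2)}(x,t,k)=I$ for $k\in\mathsf{S}$ with $|k|$ large, and also for $|k|$ small. Transporting this by \eqref{symmetry of G2} to the images of $\mathsf{S}$ under $k\mapsto\omega k$ and $k\mapsto k^{-1}$ — large $|k|$ in $\omega^{\pm1}\mathsf{S}$ corresponds to large $|k|$ in $\mathsf{S}$, while large $|k|$ in $\{k^{-1}:k\in\mathsf{S}\}$ corresponds to small $|k|$ in $\mathsf{S}$ — yields $G^{(2)}(x,t,k)=I$ for all sufficiently large $|k|$, which is the remaining assertion of the lemma.
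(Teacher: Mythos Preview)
Your proposal is correct and follows exactly the approach indicated in the paper, which simply states that the lemma follows from Lemma \ref{decompositionlemma2} and the signature tables of Figure \ref{IIbis fig: Re Phi 21 31 and 32 for zeta=0.7}. You have spelled out the mechanism---reduction to $\mathsf{S}$ via \eqref{symmetry of G2}, nilpotency of $M-I$, the estimate $|\rho(\psi(k))|\le C e^{\frac{x}{4}|\re\tilde\Phi_{ij}(\tau,k)|}$ coming from Lemma \ref{decompositionlemma2}, and the sign bookkeeping from the signature tables---in a way fully consistent with the paper's intended argument.
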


Using \eqref{n2def}, we obtain $n^{(2)}_{+}=n^{(2)}_{-}v_{j}^{(2)}$ on $\Gamma^{(2)}_{j}$, where $v_{j}^{(2)}$ is given by \eqref{vp1p 123} for $j=1,2,3$, by \eqref{vp1p 789} for $j=4,5,6$, by \eqref{vp1p 101112} for $j=7,8,9$, and
\begin{align}\nonumber
v_{1_r}^{(2)} = &\;  v_{1_r}^{(1)}, \qquad
v_{2_r}^{(2)} = v_{1_r}^{(1)} v_{6}^{(2)}, \qquad
v_{3_r}^{(2)} = v_{4_r}^{(1)}(v_{4}^{(2)})^{-1}, \qquad
v_{4_r}^{(2)} = v_{4_r}^{(1)}, 
	\\\nonumber
v_{1_a}^{(2)} = &\; (v_3^{(2)})^{-1}v_{1_a}^{(1)}, \qquad
v_{4_a}^{(2)} = v_1^{(2)} v_{4_a}^{(1)}, 
	  \\ \nonumber
 v_{1_s}^{(2)} = &\; G_-^{(2)}(k)^{-1}G_+^{(2)}(k)
= v_{7}^{(2)}(k)^{-1} \mathcal{A} \mathcal{B} v_{9}^{(2)}(\tfrac{1}{\omega k})^{-1} \mathcal{B} \mathcal{A}^{-1}  
= \begin{pmatrix}
 1 & 0 & 0
 \\
 0 & 1 & 0 \\
 (v_{1_s}^{(2)})_{31} & 0 & 1 \\
\end{pmatrix}, 
	\\ \nonumber
 (v_{1_s}^{(2)})_{31} := &\;  -\big(r_{1,a}(\omega^{2}k)+r_{2,a}(k)r_{1,a}(\tfrac{1}{\omega k})+r_{2,a}(\tfrac{1}{\omega^{2}k})\big) e^{\theta_{31}}.
\end{align}
We omit the explicit expressions for $v_{2_{s}}, v_{3_{s}}, v_{4_{s}}$ which are similar to the expression for $v_{1_{s}}$.

\section{The $n^{(2)} \to n^{(3)}$ transformation}
For the third transformation, we will deform contours up and down (see \eqref{G3def} below) using the following factorizations:
\begin{align}\nonumber
& v_{4}^{(2)}\hspace{-0.07cm}=\hspace{-0.07cm}v_{4}^{(3)}v_{4u}^{(3)}, \;\;\; v_{4}^{(3)}\hspace{-0.07cm} =\hspace{-0.07cm} \begin{pmatrix}
1 & \hat{r}_{1,a}(k) e^{-\theta_{21}} & 0 \\
0 & 1 & 0 \\
0 & 0 & 1
\end{pmatrix}\hspace{-0.07cm}, \;\;\; v_{4u}^{(3)}\hspace{-0.07cm} =\hspace{-0.07cm} \begin{pmatrix}
1 & 0 & 0 \\
0 & 1 & 0 \\
r_{2,a}(\frac{1}{\omega^{2}k}) e^{\theta_{31}} & -r_{1,a}(\frac{1}{\omega k})e^{\theta_{32}} & 1
\end{pmatrix}\hspace{-0.07cm}, \nonumber \\
& v_{6}^{(2)}=v_{6d}^{(3)}v_{6}^{(3)}, \quad v_{6d}^{(3)} = \begin{pmatrix}
1 & 0 & r_{1,a}(\frac{1}{\omega^{2} k})e^{-\theta_{31}} \\[0.05cm]
0 & 1 & -r_{2,a}(\frac{1}{\omega k})e^{-\theta_{32}} \\
0 & 0 & 1
\end{pmatrix}, \quad v_{6}^{(3)} = \begin{pmatrix}
1 & 0 & 0 \\
\hat{r}_{2,a}(k) e^{\theta_{21}} & 1 & 0 \\
0 & 0 & 1
\end{pmatrix}, \nonumber \\
& v_{7}^{(2)} \hspace{-0.07cm} = \hspace{-0.07cm} v_{7u}^{(3)}v_{7}^{(3)}\hspace{-0.07cm}, \;\; v_{7}^{(3)} \hspace{-0.07cm}=\hspace{-0.07cm} \begin{pmatrix}
1 & \hspace{-0.07cm}0 & \hspace{-0.07cm}0 \\
-r_{2,a}(k)e^{\theta_{21}} & \hspace{-0.07cm}1 & \hspace{-0.07cm}0 \\
0 & \hspace{-0.07cm}0 & \hspace{-0.07cm}1
\end{pmatrix}\hspace{-0.07cm}, \;\; v_{9}^{(2)} \hspace{-0.07cm}=\hspace{-0.07cm} v_{9}^{(3)}v_{9d}^{(3)}, \;\; v_{9}^{(3)} \hspace{-0.07cm}=\hspace{-0.07cm} \begin{pmatrix}
1 & \hspace{-0.07cm}-r_{1,a}(k)e^{-\theta_{21}} & \hspace{-0.07cm}0 \\
0 & \hspace{-0.07cm}1 & \hspace{-0.07cm}0 \\
0 & \hspace{-0.07cm}0 & \hspace{-0.07cm}1
\end{pmatrix}\hspace{-0.07cm}, \nonumber \\
& v_{7u}^{(3)} = \begin{pmatrix}
1 & 0 & 0 \\
0 & 1 & 0 \\
(r_{1,a}(\omega^{2}k)+r_{1,a}(\frac{1}{\omega k})r_{2,a}(k))e^{\theta_{31}} & r_{1,a}(\frac{1}{\omega k})e^{\theta_{32}} & 1
\end{pmatrix}, \nonumber \\
& v_{9d}^{(3)} = \begin{pmatrix}
1 & 0 & (r_{2,a}( \omega^{2}k)+r_{1,a}(k)r_{2,a}(\frac{1}{\omega k}))e^{-\theta_{31}} \\
0 & 1 & r_{2,a}(\frac{1}{\omega k})e^{-\theta_{32}} \\
0 & 0 & 1
\end{pmatrix}. \label{Vv2def}
\end{align}
\begin{figure}
\begin{center}
\begin{tikzpicture}[master,scale=1.1]
\node at (0,0) {};
\draw[black,line width=0.65 mm] (0,0)--(30:7.5);

\draw[black,line width=0.65 mm,->-=0.51,->-=0.59,->-=0.67,->-=0.8] (0,0)--(90:7.5);
\node at (92.8:2.35*1.5) {\scriptsize $1_{r}$};
\node at (87.1:2.71*1.5) {\scriptsize $2_{r}$};
\node at (87.2:4.9) {\scriptsize $3_r$};
\node at (92:3.92*1.5) {\scriptsize $4_r$};

\draw[black,line width=0.65 mm] (0,0)--(150:7.5);
\draw[dashed,black,line width=0.15 mm] (0,0)--(60:7.5);
\draw[dashed,black,line width=0.15 mm] (0,0)--(120:7.5);

\draw[black,line width=0.65 mm] ([shift=(30:3*1.5cm)]0,0) arc (30:150:3*1.5cm);
\draw[black,arrows={-Triangle[length=0.27cm,width=0.18cm]}]
($(73:3*1.5)$) --  ++(-17:0.001);

\draw[black,line width=0.65 mm] ([shift=(30:3*1.5cm)]0,0) arc (30:150:3*1.5cm);

\node at (75:3.14*1.5) {\scriptsize $2$};

\node at (105:3.29*1.5) {\scriptsize $7$};
\node at (107:2.73*1.5) {\scriptsize $9$};

\draw[black,arrows={-Triangle[length=0.27cm,width=0.18cm]}]
($(90.5:3*1.5)$) --  ++(90.5-90:0.001);
\draw[black,arrows={-Triangle[length=0.27cm,width=0.18cm]}]
($(116-4:3*1.5)$) --  ++(113+90-4:0.001);
\node at (109.5:3.13*1.5) {\scriptsize $8$};
\node at (92.5:3.12*1.5) {\scriptsize $5$};

\draw[black,line width=0.65 mm,-<-=0.04,-<-=0.28,->-=0.64,->-=0.92] (81:3.24)--($(96.9725:4.5)+(96.9725-138:1.1)$)--(96.9725:4.5)--($(96.9725:4.5)+(96.9725-45:1.1)$)--(84:6.25);
\node at (93.2:3.4*1.5) {\scriptsize $4$};
\node at (93.6:2.67*1.5) {\scriptsize $6$};
\node at (84.4:3.85*1.5) {\scriptsize $4$};
\node at (81.4:2.38*1.5) {\scriptsize $6$};

\draw[black,line width=0.65 mm,-<-=0.22,->-=0.6] (120:3.24)--($(96.9725:3*1.5)+(96.9725+135:1.1)$)--(96.9725:3*1.5)--($(96.9725:3*1.5)+(96.9725+45:1.1)$)--(120:6.25);
\draw[black,line width=0.65 mm] (120:3.24)--($(143.028:3*1.5)+(143.028-135:1.1)$)--(143.028:3*1.5)--($(143.028:3*1.5)+(143.028-45:1.1)$)--(120:6.25);

\draw[black,line width=0.65 mm] (150:3.9)--(143.028:3*1.5)--(150:5.3);

\draw[black,line width=0.65 mm,->-=0.6] (60:4.5)--(60:6.25);
\draw[black,line width=0.65 mm,->-=0.75] (60:3.24)--(60:4.5);
\draw[black,line width=0.65 mm,->-=0.6] (120:4.5)--(120:6.25);
\draw[black,line width=0.65 mm,->-=0.75] (120:3.24)--(120:4.5);


\node at (122.7:5.35) {\scriptsize $1_{s}$};
\node at (123.5:4.0) {\scriptsize $2_{s}$};
\node at (56.8:5.35) {\scriptsize $3_{s}$};
\node at (56.3:4.0) {\scriptsize $4_{s}$};

\draw[black,line width=0.65 mm] (143.028:3.24)--(143.028:6.25);
\draw[black,line width=0.65 mm,-<-=0.08,->-=0.85] (96.9725:3.24)--(96.9725:6.25);

\draw[black,line width=0.65 mm] ([shift=(30:3.24cm)]0,0) arc (30:150:3.24cm);
\draw[black,line width=0.65 mm] ([shift=(30:6.25cm)]0,0) arc (30:150:6.25cm);

\node at (99.6:5.6) {\scriptsize $5_{s}$};
\node at (100:3.65) {\scriptsize $6_{s}$};

\draw[green,fill] (143.028:3*1.5) circle (0.12cm);
\draw[blue,fill] (96.9725:3*1.5) circle (0.12cm);

\draw[black,line width=0.65 mm,->-=0.43] (90:4.5)--(70:6.25)--(69.5:7.5);
\draw[black,line width=0.65 mm,->-=0.89] (70:0)--(70:3.24)--(90:4.5);
\node at (81:4.1) {\scriptsize $1_a$};
\node at (74.5:5.3) {\scriptsize $4_a$};

\draw[black,line width=0.65 mm] (30:4.5)--(50:6.25)--(50.5:7.5);
\draw[black,line width=0.65 mm] (50:0)--(50:3.24)--(30:4.5);

\draw[black,line width=0.65 mm] (39:3.24)--($(30-6.9725:4.5)+(30-6.9725+138:1.1)$)--(30:4.05);
\draw[black,line width=0.65 mm] (30:5.16)--($(30-6.9725:4.5)+(30-6.9725+45:1.1)$)--(36:6.25);

\end{tikzpicture}
\end{center}
\begin{figuretext}\label{Gamma3fig}
The contour $\Gamma^{(3)}$ (solid), the boundary of $\mathsf{S}$ (dashed), and the saddle points $k_{1}$ (blue) and $\omega^{2}k_{2}$ (green).
\end{figuretext}
\end{figure}
Let $\Gamma^{(3)}$ be the contour shown in Figure \ref{Gamma3fig}. We define $n^{(3)}$ by
\begin{align}\label{n3def}
n^{(3)}(x,t,k) = n^{(2)}(x,t,k)G^{(3)}(x,t,k), \qquad k \in \C \setminus \Gamma^{(3)},
\end{align}
where $G^{(3)}$ is given for $k \in \mathsf{S}$ by
\begin{align}\label{G3def}
G^{(3)}(x,t,k) = \begin{cases} 
(v_{4}^{(2)})^{-1}, & k \mbox{ on the $+$ side of }\Gamma_{4_a}^{(3)},
	\\[0.05cm]
v_{6}^{(2)}, & k \mbox{ on the $+$ side of }\Gamma_{1_a}^{(3)},
	\\[0.05cm]
(v_{4u}^{(3)})^{-1}, & k \mbox{ on the $+$ side of }\Gamma_{4}^{(3)}, \\
v_{6d}^{(3)}, & k \mbox{ on the $-$ side of }\Gamma_{6}^{(3)}, \\
v_{7u}^{(3)}, & k \mbox{ on the $-$ side of }\Gamma_{7}^{(3)}, \\
(v_{9d}^{(3)})^{-1}, & k \mbox{ on the $+$ side of }\Gamma_{9}^{(3)}, \\
I, & \mbox{otherwise},
\end{cases}
\end{align}
and $G^{(3)}$ is defined to $\mathbb{C}\setminus \Gamma^{(3)}$ using the $\mathcal{A}$- and $\mathcal{B}$-symmetries (as in \eqref{symmetry of G1}). The following lemma can be deduced from Lemma \ref{decompositionlemma2} and Figure \ref{IIbis fig: Re Phi 21 31 and 32 for zeta=0.7}.

\begin{lemma}
$G^{(3)}(x,t,k)$ and $G^{(3)}(x,t,k)^{-1}$ are uniformly bounded for $k \in \mathbb{C}\setminus \Gamma^{(3)}$, $\tau \in \mathcal{I}$, and $x\geq 1$. Furthermore, $G^{(3)}(x,t,k)=I$ for all large enough $|k|$.
\end{lemma}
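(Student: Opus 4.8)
The plan is to reduce everything to the sector $\mathsf{S}$ and then, case by case in the definition \eqref{G3def}, to bound the entries of the elementary matrix at hand by means of Lemma \ref{decompositionlemma2} and the signature tables of Figure \ref{IIbis fig: Re Phi 21 31 and 32 for zeta=0.7}. Since $\mathcal{A}$ and $\mathcal{B}$ are constant invertible matrices and $G^{(3)}$ is extended off $\mathsf{S}$ by the $\mathcal{A}$- and $\mathcal{B}$-symmetries, it suffices to bound $G^{(3)}$ and $(G^{(3)})^{-1}$ for $k\in\mathsf{S}\setminus\Gamma^{(3)}$. On this set $G^{(3)}$ equals, in each connected component, one of the matrices $(v_{4}^{(2)})^{-1}$, $v_{6}^{(2)}$, $(v_{4u}^{(3)})^{-1}$, $v_{6d}^{(3)}$, $v_{7u}^{(3)}$, $(v_{9d}^{(3)})^{-1}$, $I$, or a product of two such matrices. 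Each of these is unipotent, so its inverse is again unipotent (with entries polynomial in the entries of the original), and it is enough to prove that every off-diagonal entry is bounded uniformly for $x\geq 1$, $\tau\in\mathcal{I}$, and $k$ in the component where it occurs.

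Inspecting \eqref{vp1p 789}, \eqref{vp1p 101112}, and \eqref{Vv2def}, every such entry is a finite sum of terms of the form $R(k)\,e^{\theta_{ij}(x,t,k)}$, possibly multiplied by the factor $\tilde{r}$ (which is bounded on the regions in question), where $R$ is one of $r_{1,a},r_{2,a},\hat{r}_{1,a},\hat{r}_{2,a}$ evaluated at $k$, $\omega^{\pm1}k$, or $(\omega^{\pm1}k)^{-1}$. Taking the $N=0$ version of the estimate in Lemma \ref{decompositionlemma2}(b), using the decompositions of $r_{2},\hat{r}_{2}$ deduced from \eqref{r1r2 relation with kbar symmetry}, and recalling that $1+r_{1}r_{2}=1+\tilde{r}|r_{1}|^{2}>1$ on the relevant arc (as noted before \eqref{def of rhat}, so that the $\hat{r}_{j,a}$ obey the same type of bound), one obtains $|R|\leq C\,e^{\frac{x}{4}|\re\tilde{\Phi}_{21}(\tau,\,\cdot\,)|}$ on the domain of $R$, with the phase evaluated at the corresponding shifted or inverted point. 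From the explicit formula $\tilde{\Phi}_{21}(\tau,k)=\frac{k^{2}-1}{2k}-\frac{k^{4}-1}{4k^{2}}\tau$ one gets $\tilde{\Phi}_{21}(\tau,k^{-1})=-\tilde{\Phi}_{21}(\tau,k)$ and $\overline{\tilde{\Phi}_{21}(\tau,k)}=\tilde{\Phi}_{21}(\tau,\bar{k})$, which together with $\tilde{\Phi}_{31}(\tau,k)=-\tilde{\Phi}_{21}(\tau,\omega^{2}k)$, $\tilde{\Phi}_{32}(\tau,k)=\tilde{\Phi}_{21}(\tau,\omega k)$, and $\tilde{\Phi}_{31}=\tilde{\Phi}_{32}+\tilde{\Phi}_{21}$ show that $|\re\tilde{\Phi}_{21}(\tau,\,\cdot\,)|$, evaluated at the argument of $R$, equals $|\re\tilde{\Phi}_{ij}(\tau,k)|$ for the same index pair $(i,j)$ as in $e^{\theta_{ij}}=e^{x\re\tilde{\Phi}_{ij}(\tau,k)}$ (for the product terms one first splits $\theta_{31}=\theta_{32}+\theta_{21}$ and factors accordingly). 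Hence each entry is $O\big(e^{\frac{3x}{4}\re\tilde{\Phi}_{ij}(\tau,k)}\big)$ — or a product of two such factors — on its component, and it only remains to verify that the relevant $\re\tilde{\Phi}_{ij}(\tau,k)$ is nonpositive there.

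This last step is precisely what the signature tables of Figure \ref{IIbis fig: Re Phi 21 31 and 32 for zeta=0.7} record: the contour $\Gamma^{(3)}$ of Figure \ref{Gamma3fig} and the sets $U_{1},\hat{U}_{1},U_{2},\hat{U}_{2}$ — all intersected with regions on which $\re\Phi_{21}$ has a fixed sign — were constructed so that each arc of the lens decomposition lies in the half on which the pertinent $\re\tilde{\Phi}_{ij}$ is nonpositive; via the phase symmetries above, the constraint $\frac{1}{\omega^{m}k}\in U_{1}$ (or $\in\hat{U}_{1}$, etc.) automatically forces the correct sign of $\re\tilde{\Phi}_{31}$ or $\re\tilde{\Phi}_{32}$ at $k$. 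This remains true uniformly for $\tau\in\mathcal{I}$, including in the limit $\tau\to0$ where $k_{1}\to i$ and the relevant sign regions shrink without disappearing. Finally, all the nontrivial cases of \eqref{G3def} concern lens regions contained in a fixed bounded annulus, so for $|k|$ large none of them applies and $G^{(3)}=I$ there, consistently with $G^{(2)}=I$ for large $|k|$ (Lemma \ref{lemma:G1p1p}) and with the fact that the transformation $n^{(2)}\mapsto n^{(3)}$ only deforms contour pieces locally.

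The main obstacle is the bookkeeping in the third paragraph: for each of the roughly dozen nonzero entries of the six matrices in \eqref{G3def} one must match the correct $\re\tilde{\Phi}_{ij}$ against the correct lobe of the correct signature table and check that the contour of Figure \ref{Gamma3fig} leaves enough room — the factor $3/4$ against the factor $1/4$ in the growth estimate for $R$ — for the bound to close. The one genuinely delicate feature is uniformity all the way down to $\tau=0$, where the saddle point merges with $i$ and the good sign regions pinch; this is already absorbed into the way the sets $U_{1},\hat{U}_{1}$ (and hence $U_{2},\hat{U}_{2}$) are defined as intersections with $\{\re\Phi_{21}>0\}$, so that the uniform constant in Lemma \ref{decompositionlemma2}(b) applies throughout $\mathcal{I}$.
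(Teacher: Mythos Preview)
Your proposal is correct and follows exactly the route the paper indicates: the paper's own ``proof'' is just the one-line remark that the lemma can be deduced from Lemma \ref{decompositionlemma2} and the signature tables of Figure \ref{IIbis fig: Re Phi 21 31 and 32 for zeta=0.7}, and you have spelled out precisely that deduction. One minor inaccuracy: in \eqref{G3def} the regions are disjoint lens components and $G^{(3)}$ is a single unipotent factor on each (never a product of two), but this over-caution does not affect the argument.
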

The jump matrices $v_j^{(3)}$ are given for $j = 4,6,7,9$ by (\ref{Vv2def}) and by
\begin{align}\nonumber
& v_j^{(3)} := v_j^{(2)}, \; j=2,5,8,1_{s},2_{s},3_{s},4_{s}, \qquad
v_{5_s}^{(3)} :=  v_{4u}^{(3)} v_{7u}^{(3)},\qquad v_{6_s}^{(3)} :=  v_{9d}^{(3)} v_{6d}^{(3)},
	\\\nonumber
& v_{1_r}^{(3)} = (v_{6d}^{(3)})^{-1}v_{1_r}^{(2)}v_{6d}^{(3)}, \quad
v_{2_r}^{(3)} = (v_{6}^{(2)})^{-1}v_{2_r}^{(2)}, 
\quad
v_{3_r}^{(3)} = v_{4}^{(2)} v_{3_r}^{(2)} , \quad
v_{4_r}^{(3)} = v_{4u}^{(3)} v_{4_r}^{(2)}(v_{4u}^{(3)})^{-1},
	\\ \label{II jumps vj diagonal p2p}
& v_{1_a}^{(3)} = v_{1_a}^{(2)} v_{6}^{(2)}, \qquad
v_{4_a}^{(3)} = v_{4_a}^{(2)} (v_{4}^{(2)})^{-1}.
\end{align}

On the subcontours of $\Gamma^{(3)}\cap \mathsf{S}$ that are unlabeled in Figure \ref{Gamma3fig}, the matrix $v^{(3)}$ is close to $I$ as $x \to \infty$, so we omit its explicit expression. On $\Gamma^{(3)}\setminus \mathsf{S}$, $v^{(3)}$ can be obtained using \eqref{vjsymm}. 

Our next lemma shows that the jumps on $\Gamma_{j}^{(1)}$, $j = 1_r, \dots, 4_r$, are uniformly small for large $x$.

\begin{lemma}\label{vjrsmalllemma}
For any $1 \leq p \leq \infty$ and $j = 1_r, 2_r, 3_r, 4_r$, the $L^p$-norm of $v^{(3)} - I$ on $\Gamma_j^{(3)}$ is $O(x^{-N})$ as $x \to \infty$ uniformly for $\tau \in \mathcal{I}$.
\end{lemma}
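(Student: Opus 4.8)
The plan is to notice that each of the four jump matrices $v^{(3)}$ on $\Gamma^{(3)}_{j}$, $j = 1_r, 2_r, 3_r, 4_r$, is obtained by conjugating one of the two residual matrices $v_{1_r}^{(1)}$ and $v_{4_r}^{(1)}$ of \eqref{v1arv4ardef} by a uniformly bounded matrix, so that the desired bound reduces to the $L^{p}$-smallness of $v_{1_r}^{(1)} - I$ and $v_{4_r}^{(1)} - I$ which is already built into Lemma \ref{decompositionlemma1}.

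Concretely, unwinding the relations $v_{1_r}^{(2)} = v_{1_r}^{(1)}$, $v_{2_r}^{(2)} = v_{1_r}^{(1)} v_{6}^{(2)}$, $v_{3_r}^{(2)} = v_{4_r}^{(1)}(v_{4}^{(2)})^{-1}$, $v_{4_r}^{(2)} = v_{4_r}^{(1)}$ together with \eqref{II jumps vj diagonal p2p} yields
\begin{align*}
& v_{1_r}^{(3)} - I = (v_{6d}^{(3)})^{-1}(v_{1_r}^{(1)} - I)v_{6d}^{(3)}, & & v_{2_r}^{(3)} - I = (v_{6}^{(2)})^{-1}(v_{1_r}^{(1)} - I)v_{6}^{(2)}, \\
& v_{3_r}^{(3)} - I = v_{4}^{(2)}(v_{4_r}^{(1)} - I)(v_{4}^{(2)})^{-1}, & & v_{4_r}^{(3)} - I = v_{4u}^{(3)}(v_{4_r}^{(1)} - I)(v_{4u}^{(3)})^{-1}.
\end{align*}
The conjugating matrices $v_{6d}^{(3)}, v_{6}^{(2)}, v_{4}^{(2)}, v_{4u}^{(3)}$ and their inverses are uniformly bounded on $\Gamma^{(3)}_{1_r}, \dots, \Gamma^{(3)}_{4_r}$ for $x \geq 1$ and $\tau \in \mathcal{I}$; this follows from \eqref{vp1p 789}, \eqref{Vv2def}, the signature tables of Figure \ref{IIbis fig: Re Phi 21 31 and 32 for zeta=0.7}, and Lemma \ref{decompositionlemma2}, exactly as in the proof of Lemma \ref{lemma:G1p1p}. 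Hence, for $1 \leq p \leq \infty$, it suffices to show $\|v_{1_r}^{(1)} - I\|_{L^{p}} = O(x^{-N})$ on the pieces of the imaginary axis carrying $\Gamma^{(3)}_{1_r}$ and $\Gamma^{(3)}_{2_r}$, which lie in $(0,i)$, and $\|v_{4_r}^{(1)} - I\|_{L^{p}} = O(x^{-N})$ on those carrying $\Gamma^{(3)}_{3_r}$ and $\Gamma^{(3)}_{4_r}$, which lie in $(i, i\infty)$.

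For this last step, recall from \eqref{v1arv4ardef} that $v_{1_r}^{(1)} - I$ has the single nonzero entry $\tilde{r}_{1,r}(\tfrac{1}{k})e^{\theta_{21}}$ and $v_{4_r}^{(1)} - I$ the single nonzero entry $-\tilde{r}_{2,r}(\tfrac{1}{k})e^{-\theta_{21}}$. On $i\R \setminus \{0\}$ one has $\re\tilde{\Phi}_{21}(\tau, k) = \tfrac{(\im k)^{4} - 1}{4(\im k)^{2}}\tau$, which is $\leq 0$ on $(0,i)$ and $\geq 0$ on $(i, i\infty)$ for every $\tau \in \mathcal{I}$; hence $|e^{\theta_{21}}| \leq 1$ on the first pair of contours and $|e^{-\theta_{21}}| \leq 1$ on the second. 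It then remains to invoke Lemma \ref{decompositionlemma1}(c), which gives $\|\tilde{r}_{1,r}(x, \cdot)\|_{L^{p}((-i\infty, -i])} = O(x^{-N})$, together with the identity $\tilde{r}_{2,r}(x, k) = \tilde{r}(k)\overline{\tilde{r}_{1,r}(x, \bar{k}^{-1})}$ and the boundedness of $\tilde{r}$ on $(-i, 0)$, which give $\|\tilde{r}_{2,r}(x, \cdot)\|_{L^{p}((-i,0))} = O(x^{-N})$; transporting these bounds to the imaginary axis along $k \mapsto \tfrac{1}{k}$ (under which $|dk| = |k|^{-2}\,|d(\tfrac{1}{k})|$ with $|k|^{-2} \leq 1$ on $(0,i)$ and bounded on the image of the bounded part of $(i, i\infty)$) then completes the argument. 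This is essentially the estimate already indicated at the end of Section \ref{nton1sec}.

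The step requiring the most care — though I do not expect a genuine obstacle — is the claimed uniform boundedness of the conjugating matrices $v_{6d}^{(3)}, v_{6}^{(2)}, v_{4}^{(2)}, v_{4u}^{(3)}$ on the precise arcs of $\Gamma^{(3)}$ in question. Each of their off-diagonal entries has the form $r_{\bullet,a}(\cdot)e^{\pm\theta_{ij}}$ or $\hat{r}_{\bullet,a}(\cdot)e^{\pm\theta_{ij}}$, so one must check, reading off the signature tables of Figure \ref{IIbis fig: Re Phi 21 31 and 32 for zeta=0.7}, that the inequalities $\tfrac14|\re\tilde{\Phi}_{21}(\tau, \cdot)| \pm \re\tilde{\Phi}_{ij}(\tau, \cdot) \leq 0$ hold on these arcs, so as to absorb the factor $e^{\frac{x}{4}|\re\tilde{\Phi}_{21}|}$ furnished by Lemma \ref{decompositionlemma2}. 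This is the same kind of bookkeeping already carried out in the proofs of Lemma \ref{lemma:G1p1p} and its $n^{(3)}$-analogue, so no new difficulty arises.
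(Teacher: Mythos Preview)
Your approach is correct and essentially the same as the paper's: you identify the same conjugation identities, invoke Lemma~\ref{decompositionlemma2} together with the signature tables to bound the conjugating matrices (the paper spells this out only for the entry $(v_{6d}^{(3)})_{13}$, using $\tilde{\Phi}_{21}(\tau,\tfrac{1}{\omega^{2}k})=\tilde{\Phi}_{31}(\tau,k)$), and then appeal to Lemma~\ref{decompositionlemma1}(\ref{decompositionlemma1partc}) for the residual pieces. One minor slip: in your change-of-variables remark the Jacobian should read $|dk|=|k|^{2}\,|d(\tfrac{1}{k})|$, and it is $|k|^{2}\le 1$ on $(0,i)$ (equivalently $|s|^{-2}\le 1$ on $(-i\infty,-i]$) that makes the $L^{1}$ transport go through; your conclusion is unaffected.
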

\begin{proof}
We have
\begin{align*}
& v_{1_r}^{(3)} - I = (v_{6d}^{(3)})^{-1}(v_{1_r}^{(1)} - I)v_{6d}^{(3)}, \qquad
&& v_{2_r}^{(3)} -I = (v_{6}^{(2)})^{-1}(v_{1_r}^{(1)} -I)v_{6}^{(2)}.
\end{align*}
The matrices $(v_{6d}^{(3)})^{\pm 1}$ and $(v_{6}^{(2)})^{\pm 1}$ are uniformly bounded on $\Gamma_{1_r}^{(3)}$ and $\Gamma_{2_r}^{(3)}$, respectively. Indeed, consider for example the $(13)$-entry of $v_{6d}^{(3)}$ given by $(v_{6d}^{(3)})_{13} = r_{1,a}(\frac{1}{\omega^{2} k})e^{-\theta_{31}}$. Using Lemma \ref{decompositionlemma2} and the fact that $\tilde{\Phi}_{21}(\tau,\frac{1}{\omega^{2} k}) = \tilde{\Phi}_{31}(\tau,k)$ has nonnegative real part on $k \in \Gamma_{1_r}^{(3)}$, we obtain
$$|(v_{6d}^{(3)}(x,t,k))_{13}| \leq C e^{\frac{x}{4}|\re \tilde{\Phi}_{21}(\tau,\frac{1}{\omega^{2} k})|} e^{- x \re \tilde{\Phi}_{31}(\tau, k)}
\leq C e^{-\frac{3x}{4}|\re \tilde{\Phi}_{31}(\tau, k)|}
\leq C$$
uniformly for $k \in \Gamma_{1_r}^{(3)}$ and $\tau \in \mathcal{I}$, as desired.
On the other hand, for any $1 \leq p \leq \infty$, the $L^p$-norm of the matrix $v_{1_r}^{(1)} - I$ on $[0,i]$ is of order $O(x^{-N})$ uniformly for $\tau \in \mathcal{I}$ by Lemma \ref{decompositionlemma1}(\ref{decompositionlemma1partc}). This proves the desired statement $j = 1_r, 2_r$; the proof for $j = 3_r, 4_r$ is analogous.
\end{proof}

\begin{lemma}\label{v1av4asmalllemma}
For any $1 \leq p \leq \infty$, the $L^p$-norm of $v^{(3)} - I$ on $\Gamma_{1_a}^{(3)} \cup \Gamma_{4_a}^{(3)}$ is $O(x^{-N})$ as $x \to \infty$ uniformly for $\tau \in \mathcal{I}$.
\end{lemma}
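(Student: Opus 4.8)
Here is the plan. The idea is to write each of the two jump matrices as an explicit product, split every entry into a coefficient (a combination of analytic approximations of $r_1,r_2$) times an exponential $e^{\pm\theta_{ij}}$, and then estimate these pieces, treating a neighborhood of the intersection point $i$ separately from the rest of the contour.

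First I would use the factorizations of Sections \ref{n1ton2sec}--\ref{Gamma3fig} to write $v_{1_a}^{(3)}=(v_3^{(2)})^{-1}v_{1_a}^{(1)}v_6^{(2)}$ and $v_{4_a}^{(3)}=v_1^{(2)}v_{4_a}^{(1)}(v_4^{(2)})^{-1}$ and multiply these out. A direct calculation shows that $v_{1_a}^{(3)}-I$ has exactly three nonzero entries: the $(21)$-entry, equal to $\big(\hat r_{2,a}(k)+\tilde r_{1,a}(\tfrac1k)-r_{1,a}(\tfrac1k)\big)e^{\theta_{21}}$; the $(13)$-entry, equal to $\big(r_{1,a}(\tfrac{1}{\omega^{2}k})+r_{2,a}(\omega^{2}k)\big)e^{-\theta_{31}}$; and the $(23)$-entry, equal to a bounded combination of $r_{1,a},r_{2,a}$ and of $\tilde r_{1,a}(\tfrac1k)-r_{1,a}(\tfrac1k)$ times $e^{-\theta_{32}}$; the diagonal of $v_{1_a}^{(3)}$ is exactly $(1,1,1)$. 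The computation for $v_{4_a}^{(3)}-I$ is analogous with $\theta_{21}\mapsto-\theta_{21}$ and $\theta_{3j}\mapsto-\theta_{3j}$, and with the roles of $r_1,r_2$ interchanged. By the signature tables of Figure \ref{IIbis fig: Re Phi 21 31 and 32 for zeta=0.7}, on $\Gamma_{1_a}^{(3)}$ one has $\re\tilde{\Phi}_{21}(\tau,k)\le0$ and $\re\tilde{\Phi}_{31}(\tau,k),\re\tilde{\Phi}_{32}(\tau,k)\ge0$ (with the opposite signs on $\Gamma_{4_a}^{(3)}$), with equality only at $i$. Using the phase identities $\tilde{\Phi}_{21}(\tau,\tfrac1k)=-\tilde{\Phi}_{21}(\tau,k)$, $\tilde{\Phi}_{21}(\tau,\omega k)=\tilde{\Phi}_{32}(\tau,k)$, $\tilde{\Phi}_{21}(\tau,\omega^{2}k)=-\tilde{\Phi}_{31}(\tau,k)$ and $\overline{\tilde{\Phi}_{21}(\tau,\bar k)}=\tilde{\Phi}_{21}(\tau,k)$, the growth bounds of Lemmas \ref{decompositionlemma1} and \ref{decompositionlemma2} bound each coefficient above by $Ce^{\frac{x}{4}|\re\tilde{\Phi}_{ij}(\tau,k)|}$ with $\tilde{\Phi}_{ij}$ the phase appearing in the exponential it multiplies, so every entry of $v^{(3)}-I$ on these contours is bounded by $Ce^{-\frac{3x}{4}|\re\tilde{\Phi}_{ij}(\tau,k)|}$; on the part of $\Gamma_{1_a}^{(3)}\cup\Gamma_{4_a}^{(3)}$ lying at a fixed positive distance from $i$, the relevant $|\re\tilde{\Phi}_{ij}|$ is bounded below by a positive constant uniformly in $\tau$, so these entries are $O(e^{-cx})$.

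The main obstacle is the neighborhood of $i$, where all three phases are purely imaginary (indeed $\tilde{\Phi}_{21}$ is purely imaginary on all of $\partial\D$), so the exponential gain degenerates — this is precisely the effect of the saddle point $k_1$ merging with $i$ as $\tau\to0$. Here the plan is to show that the coefficients multiplying the bad exponentials vanish to high order at $i$. For the $(21)$-coefficient: since $-i$ is the base point $k_{\star}$ of Lemma \ref{decompositionlemma1} and also lies in $\mathcal{R}_1$, the functions $\tilde r_{1,a}$ and $r_{1,a}$ share the same Taylor polynomial of $r_1$ at $-i$, so $\tilde r_{1,a}(\tfrac1k)-r_{1,a}(\tfrac1k)=O(|k-i|^{M+1}e^{\frac{x}{4}|\re\tilde{\Phi}_{21}(\tau,k)|})$ near $i$, where $M$ denotes the order of the decompositions invoked; and since $i\in\hat{\mathcal{R}}_1$ while $\hat r_1$ vanishes to all orders at $i$ (because $r_1$ does, by Assumption $(iv)$), the representation $\hat r_{2,a}(k)=\tilde r(k)\overline{\hat r_{1,a}(\bar k^{-1})}$ coming from \eqref{r1r2 relation with kbar symmetry} gives $\hat r_{2,a}(k)=O(|k-i|^{M+1}e^{\frac{x}{4}|\re\tilde{\Phi}_{21}(\tau,k)|})$ near $i$. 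For the $(13)$- and $(23)$-coefficients: after replacing each $r_{1,a},r_{2,a}$ by its defining Taylor polynomial plus a remainder controlled exactly as above, the remaining $x$-independent part is an analytic function of $k$ near $i$ which, on $\partial\D$, agrees up to $O(|k-i|^{M+1})$ with a smooth combination of $r_1,r_2$ that the nonlinear relations \eqref{r1r2 relation on the unit circle}--\eqref{r1r2 relation with kbar symmetry} rewrite as $r_1$ times bounded factors — for instance $r_1(\tfrac{1}{\omega^{2}k})+r_2(\omega^{2}k)=-r_1(k)r_2(\tfrac{1}{\omega k})$ on $\partial\D$ for the $(13)$-coefficient, with the $(23)$-combination collapsing similarly after the relation is applied — and hence vanishes to all orders at $i$. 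Since an analytic function vanishing to order $M+1$ along $\partial\D$ vanishes to order $M+1$ at $i$, these coefficients are likewise $O(|k-i|^{M+1}e^{\frac{x}{4}|\re\tilde{\Phi}_{3j}(\tau,k)|})$ near $i$.

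Finally I would combine the coefficient and exponential bounds: near $i$ each entry is $\le C|k-i|^{M+1}e^{-\frac{3x}{4}|\re\tilde{\Phi}_{ij}(\tau,k)|}$. Since $\partial_k\tilde{\Phi}_{31}(\tau,i)$ and $\partial_k\tilde{\Phi}_{32}(\tau,i)$ are nonzero ($\omega^{\pm1}i$ are not saddle points of $\tilde{\Phi}_{21}$), one has $|\re\tilde{\Phi}_{3j}(\tau,k)|\ge c|k-i|$ on the contour near $i$ uniformly in $\tau$, so the $(13)$- and $(23)$-entries are $O\!\big(\sup_{y\ge0}y^{M+1}e^{-cxy}\big)=O(x^{-(M+1)})$. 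For the $(21)$-entry one has only $\re\tilde{\Phi}_{21}(\tau,k)\le-c|k-i|^{2}$ near $i$, because $\partial_k\tilde{\Phi}_{21}(0,i)=0$; this yields $O\!\big(\sup_{y\ge0}y^{M+1}e^{-cxy^{2}}\big)=O(x^{-(M+1)/2})$, so taking $M\ge2N-1$ in Lemmas \ref{decompositionlemma1}--\ref{decompositionlemma2} gives $O(x^{-N})$ for this entry as well. As $\Gamma_{1_a}^{(3)}\cup\Gamma_{4_a}^{(3)}$ has finite length, integrating the pointwise bounds gives the claimed $O(x^{-N})$ estimate in every $L^p$-norm, uniformly for $\tau\in\mathcal{I}$. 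I expect the genuinely delicate step to be the explicit verification that the $(13)$- and $(23)$-coefficients collapse, via \eqref{r1r2 relation on the unit circle}--\eqref{r1r2 relation with kbar symmetry}, into expressions proportional to $r_1$ near $i$: this is the point at which the coexistence of the two distinct analytic approximations $\tilde r_{1,a}$ and $r_{1,a}$ of $r_1$ on overlapping domains is reconciled.
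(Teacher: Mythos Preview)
Your proposal is correct and follows essentially the same approach as the paper: compute the three nonzero entries of $v_{1_a}^{(3)}-I$ explicitly, get exponential decay away from $i$, and near $i$ show that every coefficient vanishes to high order by combining the common Taylor polynomial of $\tilde r_{1,a}$ and $r_{1,a}$ at $-i$, the vanishing of $\hat r_2$ at $i$, and the nonlinear relation \eqref{r1r2 relation on the unit circle}, then pair this with the quadratic bound $\re\tilde\Phi_{21}\le -c|k-i|^2$ for the $(21)$-entry and the linear bound $\re\tilde\Phi_{3j}\ge c|k-i|$ for the others.

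Two small points where the paper is more explicit and where you should be careful when writing out details. First, for the product term $r_{1,a}(\tfrac1k)r_{2,a}(\omega^{2}k)$ in the $(23)$-coefficient, the exponential growth factors coming from Lemma~\ref{decompositionlemma2} live in \emph{different} phases; the paper uses the identity $|\re\tilde\Phi_{21}(\tau,\omega k)|=|\re\tilde\Phi_{21}(\tau,\tfrac1k)|+|\re\tilde\Phi_{21}(\tau,\omega^{2}k)|$ on $\Gamma_{1_a}^{(3)}$ to combine them into a single $e^{\frac{x}{4}|\re\tilde\Phi_{32}|}$, which is what makes the net exponent $-\tfrac{3x}{4}|\re\tilde\Phi_{32}|$ come out. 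Your sentence ``each coefficient is bounded by $Ce^{\frac{x}{4}|\re\tilde\Phi_{ij}|}$ with $\tilde\Phi_{ij}$ the phase appearing in the exponential it multiplies'' is true but hides this step. Second, your detour through ``an analytic function vanishing to order $M+1$ along $\partial\D$ vanishes to order $M+1$ at $i$'' is unnecessary: since the exact combination $h(k)=r_1(\omega k)+r_2(\tfrac1{\omega k})+r_1(\tfrac1k)r_2(\omega^{2}k)$ vanishes to all orders at $i$, its Taylor polynomial there is identically zero, so the $x$-independent part you isolate is already $O(|k-i|^{M+1})$ without invoking restriction to $\partial\D$. The paper does it this direct way.
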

\begin{proof}
We give a proof for $v_{1_a}^{(3)}$; the proof for $v_{4_a}^{(3)}$ is analogous.
We have
\begin{align*}
& v_{1_a}^{(3)} - I 
= \begin{pmatrix}
 0 & 0 & \big(r_{1,a}(\frac{1}{k \omega ^2})+r_{2,a}(k \omega ^2)\big)e^{-\theta_{31}} \\
\big(\tilde{r}_{1,a}(\frac{1}{k}) - r_{1,a}(\frac{1}{k}) + \hat{r}_{2,a}(k) \big)e^{\theta_{21}} & 0 & (v_{1_a}^{(3)})_{23} \\
 0 & 0 & 0
 \end{pmatrix},
 	\\
& (v_{1_a}^{(3)})_{23} := \Big(\big(\tilde{r}_{1,a}(\tfrac{1}{k}) - r_{1,a}(\tfrac{1}{k})\big) r_{1,a}(\tfrac{1}{\omega^2 k}) 
-\big(r_{1,a}(\omega k) + r_{2,a}(\tfrac{1}{\omega k}) + r_{1,a}(\tfrac{1}{k}) r_{2,a}(\omega^2 k) \big) \Big)e^{-\theta_{32}}.
\end{align*}

Let us first estimate the $(21)$-entry of $v_{1_a}^{(3)}$.
Lemma \ref{decompositionlemma1} (with $N$ replaced by $2N$) implies that
$$\bigg|\tilde{r}_{1,a}(\tfrac{1}{k}) - \sum_{j=0}^{2N} \frac{\partial_k^j\big(r_{1}(\frac{1}{k})\big)\big|_{k=i}}{j!}(k - i)^{j}  \bigg| \leq C |k- i |^{2N+1} e^{\frac{x}{4}|\re \tilde{\Phi}_{21}(\tau,\frac{1}{k})|}, \qquad k \in \Gamma_{1_a}^{(3)},$$
and, similarly, Lemma \ref{decompositionlemma2} implies that
\begin{align}\label{tilder1a1overk}
\bigg|r_{1,a}(\tfrac{1}{k}) - \sum_{j=0}^{2N} \frac{\partial_k^j\big(r_{1}(\frac{1}{k})\big)\big|_{k=i}}{j!}(k - i)^{j}  \bigg| \leq C |k- i |^{2N+1} e^{\frac{x}{4}|\re \tilde{\Phi}_{21}(\tau,\frac{1}{k})|}, \qquad k \in \Gamma_{1_a}^{(3)}.
\end{align}
Hence, applying the triangle inequality and the identity $\re \tilde{\Phi}_{21}(\tau,\frac{1}{k}) = -\re \tilde{\Phi}_{21}(\tau,k)$,
\begin{align}\label{tilder1aminusr1a}
\big| \tilde{r}_{1,a}(\tfrac{1}{k}) - r_{1,a}(\tfrac{1}{k}) \big|  \leq 2 C |k- i |^{2N+1} e^{\frac{x}{4}|\re \tilde{\Phi}_{21}(\tau,k)|}.
\end{align}
On the other hand, the function $\hat{r}_2(k)$ vanishes to all orders at $k = i$ and there exists a $c > 0$ such that $\re \tilde{\Phi}_{21}(\tau, k) \leq -c |k-i|^2$ uniformly for $k \in \Gamma_{1_a}^{(3)}$ and $\tau \in \mathcal{I}$.
Consequently, using (\ref{tilder1aminusr1a}) and the estimate for $\hat{r}_{2,a}(k)$ that follows from Lemma \ref{decompositionlemma2}, we arrive at
$$|(v_{1_a}^{(3)}(x,t,k))_{21}| \leq C |k-i|^{2N+1} e^{-\frac{3}{4}x |\re \tilde{\Phi}_{21}(\tau, k)|} \leq C |k-i|^{2N+1} e^{-c x |k-i|^2} \leq C x^{-N}$$
for $k \in \Gamma_{1_a}^{(3)}$ and $\tau \in \mathcal{I}$, which gives the desired estimate of the $(21)$-entry of $v_{1_a}^{(3)} - I$.

Let us next consider the $(23)$-entry of $v_{1_a}^{(3)}$.
We have $(v_{1_a}^{(3)})_{23} = (g_a(k) - h_a(k)) e^{-x \tilde{\Phi}_{32}(\tau, k)}$, where
$$g_a(k):= \big(\tilde{r}_{1,a}(\tfrac{1}{k}) - r_{1,a}(\tfrac{1}{k})\big) r_{1,a}(\tfrac{1}{\omega^2 k}), \quad h_a(k) := r_{1,a}(\omega k) + r_{2,a}(\tfrac{1}{\omega k}) + r_{1,a}(\tfrac{1}{k}) r_{2,a}(\omega^2 k).$$
Recalling (\ref{tilder1aminusr1a}) and estimating $r_{1,a}(\tfrac{1}{\omega^2 k})$ by means of Lemma \ref{decompositionlemma2}, we get the bound
$$\big|g_a(k) e^{-x \tilde{\Phi}_{32}(\tau, k)}\big| \leq C |k- i |^{2N+1} e^{\frac{x}{4}|\re \tilde{\Phi}_{21}(\tau,k)|} e^{\frac{x}{4}|\re \tilde{\Phi}_{21}(\tau,\tfrac{1}{\omega^2 k})|} e^{-x |\tilde{\Phi}_{32}(\tau, k)|}.$$
Since $-\tilde{\Phi}_{21}(\tau, k) + \tilde{\Phi}_{21}(\tau, \tfrac{1}{\omega^2 k}) = \tilde{\Phi}_{21}(\tau, \omega k)= \tilde{\Phi}_{32}(\tau, k)$, the above can be rewritten as
\begin{align}
\big|g_a(k) e^{-x \tilde{\Phi}_{32}(\tau, k)}\big| & \leq C |k- i |^{2N+1} e^{ - x (\re \tilde{\Phi}_{32}(\tau, k) - \frac{1}{4} |\re \tilde{\Phi}_{21}(\tau, \omega k)|)} \nonumber \\
& \leq C |k- i |^{2N+1} e^{ - \frac{3x}{4} \re \tilde{\Phi}_{32}(\tau, k)}. \label{gabound}
\end{align}
On the other hand, considering the relation (\ref{r1r2 relation on the unit circle}) with $k$ replaced by $\tfrac{1}{\omega^2 k}$, we infer that the function
$$h(k) := r_{1}(\omega k) + r_{2}(\tfrac{1}{\omega k}) + r_{1}(\tfrac{1}{k}) r_{2}(\omega^2 k)$$
vanishes to all orders at $k = i$. 
Hence, since $\re \tilde{\Phi}_{32}(\tau, k) \geq c |k-i|$ for $k \in \Gamma_{1_a}^{(3)}$ (cf. Figure \ref{IIbis fig: Re Phi 21 31 and 32 for zeta=0.7}),
\begin{align}\nonumber
\big|h_a(k) e^{-x \tilde{\Phi}_{32}(\tau, k)}\big|  & \leq |h_a(k) - h(k)|e^{-x \re \tilde{\Phi}_{32}(\tau, k)} + |h(k)|e^{-x \re \tilde{\Phi}_{32}(\tau, k)}
	\\ \label{v1a323estimate}
& \leq |h_a(k) - h(k)|e^{-x \re \tilde{\Phi}_{32}(\tau, k)} + C x^{-N}.
\end{align}
To estimate $|h_a(k) - h(k)|$, we note that Lemma \ref{decompositionlemma2} yields
\begin{align*} 
& \Big| r_{1,a}(\omega k)-\sum_{j=0}^{2N}\frac{\partial_k^j\big(r_{1}(\omega k)\big)\big|_{k=i}}{j!}(k- i)^{j}  \Big| \leq C |k - i|^{2N+1} e^{\frac{x}{4}|\re \tilde{\Phi}_{21}(\tau,\omega k)|}, \qquad k \in \Gamma_{1_a}^{(3)},
	\\ 
& \Big| r_{2,a}(\omega^2 k)-\sum_{j=0}^{2N}\frac{\partial_k^j\big(r_{2}(\omega^2 k)\big)\big|_{k=i}}{j!}(k - i)^{j}  \Big| \leq C |k - i|^{2N+1} e^{\frac{x}{4}|\re \tilde{\Phi}_{21}(\tau,\omega^2 k)|} , \qquad k \in \Gamma_{1_a}^{(3)},
	\\
& \Big| r_{2,a}(\tfrac{1}{\omega k}) - \sum_{j=0}^{2N}\frac{\partial_k^j\big(r_{2}(\frac{1}{\omega k})\big)\big|_{k=i}}{j!}(k-i)^{j}  \Big| \leq C |k-i|^{2N+1} e^{\frac{x}{4}|\re \tilde{\Phi}_{21}(\tau, \frac{1}{\omega k})|}, \qquad k \in \Gamma_{1_a}^{(3)},
\end{align*}
uniformly for $\tau \in \mathcal{I}$. 
The identities 
$$\tilde{\Phi}_{21}(\tau, \omega k) = -\tilde{\Phi}_{21}(\tau, \tfrac{1}{\omega k}) = \tilde{\Phi}_{21}(\tau, \tfrac{1}{k}) - \tilde{\Phi}_{21}(\tau, \omega^2 k), \qquad k \in \C,$$ 
together with Figure \ref{IIbis fig: Re Phi 21 31 and 32 for zeta=0.7}, imply that
$$|\re \tilde{\Phi}_{21}(\tau, \omega k)| = |\re \tilde{\Phi}_{21}(\tau, \tfrac{1}{\omega k})| = |\re \tilde{\Phi}_{21}(\tau, \tfrac{1}{k})| + |\re \tilde{\Phi}_{21}(\tau, \omega^2 k)|, \qquad k \in \Gamma_{1_a}^{(3)},$$
and hence, using also (\ref{tilder1a1overk}) and the relation $\tilde{\Phi}_{21}(\tau, \omega k)= \tilde{\Phi}_{32}(\tau, k)$,
\begin{align}
|h_a(k) - h(k)|e^{-x \re \tilde{\Phi}_{32}(\tau, k)} & \leq 
C |k-i|^{2N+1} e^{ - x (|\re \tilde{\Phi}_{32}(\tau, k)| - \frac{1}{4} |\re \tilde{\Phi}_{21}(\tau, \omega k)|)} \nonumber \\
& \leq C |k-i|^{2N+1} e^{ - \frac{3x}{4} |\re \tilde{\Phi}_{32}(\tau, k)|} \label{haminush}
\end{align}
for $k \in \Gamma_{1_a}^{(3)}$ and $\tau \in \mathcal{I}$. 
Shrinking the size of the lenses if necessary, there exists a $c > 0$ such that
\begin{align}\label{tildePhi32}
 \re \tilde{\Phi}_{32}(\tau, k) \geq c|k-i|
\end{align}
uniformly for $k \in \Gamma_{1_a}^{(3)}$ and $\tau \in \mathcal{I}$. 
Combining the inequalities (\ref{gabound}), (\ref{v1a323estimate}), (\ref{haminush}), and (\ref{tildePhi32}), it transpires that
\begin{align}\nonumber
|(v_{1_a}^{(3)}(x,t,k))_{23}| \leq C |k-i|^{2N+1} e^{-c|k-i|} + C x^{-N} \leq C x^{-N},
\end{align}
which proves the desired estimate for the $(23)$-entry of $v_{1_a}^{(3)} - I$; the $(13)$-entry can be handled similarly since $r_{1}(\frac{1}{k\omega^{2}})+r_{2}(k \omega^{2})$ vanishes to all orders at $k=i$ (this follows from \eqref{r1r2 relation on the unit circle} with $k$ replaced by $\omega k$ and the assumption that $r_{1}(k)$ vanishes to all orders at $k=i$).
\end{proof}

The next lemma is proved in the same way as \cite[Lemma 6.3]{CLsectorIV}.

\begin{lemma}\label{vjslemma}
It is possible to choose the analytic approximations of Lemma \ref{decompositionlemma2} so that the $L^\infty$-norm of $v_j^{(3)} - I$ on $\Gamma_j^{(3)}$, $j = 1_s, \dots, 6_s$, is $O(x^{-N})$ as $x \to \infty$ uniformly for $\tau \in \mathcal{I}$. 
\end{lemma}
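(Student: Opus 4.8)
The plan is to reduce the statement, for each $j\in\{1_s,\dots,6_s\}$, to an estimate on a single scalar quantity. From the explicit formulas for $G^{(2)},G^{(3)}$ one sees that $v_j^{(3)}-I$ is a nilpotent triangular matrix whose nonzero entries are all of the form $\mathcal{C}_j(k)\,e^{\pm\theta_{il}(x,t,k)}$, where $\mathcal{C}_j$ is a fixed polynomial in the analytic approximations $r_{1,a}$ and $r_{2,a}$ of Lemma \ref{decompositionlemma2} evaluated at $\omega$-rotated and inverted arguments. For instance, using \eqref{Vv2def}, $v_{5_s}^{(3)}-I=v_{4u}^{(3)}v_{7u}^{(3)}-I$ has a single nonzero entry, the $(31)$-entry $\big(r_{1,a}(\omega^2k)+r_{1,a}(\tfrac{1}{\omega k})r_{2,a}(k)+r_{2,a}(\tfrac{1}{\omega^2k})\big)e^{\theta_{31}}$, the $(32)$-entries of $v_{4u}^{(3)}$ and $v_{7u}^{(3)}$ cancelling exactly because the same function $r_{1,a}(\tfrac{1}{\omega k})$ appears with opposite signs; the entries of $v_{1_s}^{(3)}=v_{1_s}^{(2)}$ and of the remaining $v_{j_s}^{(3)}$ have the same structure. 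Thus it suffices to show that each such entry is $O(x^{-N})$ in $L^\infty(\Gamma_j^{(3)})$ uniformly for $\tau\in\mathcal{I}$.

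The key point is that if in each combination $\mathcal{C}_j$ one replaces the analytic approximations by the exact reflection coefficients $r_1,r_2$, the resulting function vanishes identically on the arc of $\partial\mathbb{D}$ adjacent to $\Gamma_j^{(3)}$; this reflects the fact that the jumps $v_{j_s}^{(3)}$ are artifacts of the piecewise definition of $G^{(2)},G^{(3)}$ and would be absent if the exact reflection coefficients could be used as analytic factors. For the combination displayed above this is the identity $r_1(\omega^2k)+r_1(\tfrac{1}{\omega k})r_2(k)+r_2(\tfrac{1}{\omega^2k})=0$ for $k\in\partial\mathbb{D}$, which follows from \eqref{r1r2 relation on the unit circle} and \eqref{r1r2 relation with kbar symmetry} (it may also be checked directly at $k=\pm\omega,\pm1$ using \eqref{r1r2at0} and the zeros of $r_2$), and the remaining combinations are handled analogously, using in addition the $\mathcal{A}$- and $\mathcal{B}$-symmetries of $v$. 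Consequently, on the part of $\partial\mathbb{D}$ where $r_{1,a}$, $r_{2,a}$ and their exact counterparts are all defined, $\mathcal{C}_j$ equals the vanishing exact combination plus a sum of terms each carrying a factor $r_{1,r}$ or $r_{2,r}$, hence is $O(x^{-N})$ by Lemma \ref{decompositionlemma2}(c).

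To propagate this smallness from $\partial\mathbb{D}$ to all of $\Gamma_j^{(3)}$, I would argue as in the proofs of Lemmas \ref{vjrsmalllemma} and \ref{v1av4asmalllemma}. Each entry $\mathcal{C}_j(k)\,e^{\pm\theta_{il}}$ is holomorphic in a region $W$ that contains $\Gamma_j^{(3)}$ in its interior and has part of its boundary on $\partial\mathbb{D}$ (here one uses that $k\mapsto\overline{r_{1,a}(\bar k^{-1})}$ is holomorphic, and that the $\omega$-rotated and inverted arguments occurring in $\mathcal{C}_j$ map a neighborhood of $\Gamma_j^{(3)}$ into the domains $U_1,U_2$ of analyticity of $r_{1,a},r_{2,a}$). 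On the $\partial\mathbb{D}$-part of $\partial W$ the entry is $O(x^{-N})$ by the preceding paragraph together with the fact that the relevant exponent $\re\theta_{il}$ is $\le C$ there, which is exactly why the contours near the saddle points $k_1$ and $\omega^2k_2$ are drawn as in Figure \ref{Gamma3fig} (cf. the signature tables of Figure \ref{IIbis fig: Re Phi 21 31 and 32 for zeta=0.7}); on the rest of $\partial W$ one only needs boundedness, which holds because the $e^{\frac{x}{4}|\re\tilde{\Phi}_{21}|}$-growth of the analytic approximations in Lemma \ref{decompositionlemma2}(b) is dominated by the decay of $e^{\pm\theta_{il}}$, the factor $\tfrac14$ again leaving a margin $\tfrac34$. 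A maximum-modulus argument on $W$ then yields the bound $O(x^{-N})$ on $\Gamma_j^{(3)}$.

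The main obstacle is the case-by-case construction of the regions $W$ and the verification that all the needed ingredients hold there simultaneously: one must reconcile the algebraic identities, which live on $\partial\mathbb{D}$ and are sensitive to which arcs of $\partial\mathbb{D}$ the rotated and inverted arguments occupy, with the sign and decay properties of the exponentials $e^{\pm\theta_{il}}$, and at the same time keep a single region covering all of $\Gamma_j^{(3)}$; it may be necessary to shrink the lenses and adjust the analytic approximations of Lemma \ref{decompositionlemma2} so that all the cancellations in $\mathcal{C}_j$ and all the containments of the rotated arguments hold at once. This is precisely the argument carried out in \cite[Lemma 6.3]{CLsectorIV}, which we therefore follow.
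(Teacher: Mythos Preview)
Your identification of the key algebraic structure is correct: each $v_{j_s}^{(3)}-I$ reduces to a single off-diagonal entry of the form $\mathcal{C}_j(k)e^{\pm\theta_{il}}$, and the corresponding exact combination (with $r_1,r_2$ in place of $r_{1,a},r_{2,a}$) vanishes identically on $\partial\mathbb{D}$ by \eqref{r1r2 relation on the unit circle}--\eqref{r1r2 relation with kbar symmetry}. The paper itself gives no details here and simply refers to \cite[Lemma~6.3]{CLsectorIV}, so your write-up is already more explicit than the text you are comparing to.

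However, the maximum-modulus step as you state it does not deliver $O(x^{-N})$. You claim the entry is $O(x^{-N})$ on the $\partial\mathbb{D}$-arc of $\partial W$ and merely bounded on the remainder of $\partial W$; the maximum principle then only gives boundedness on $\Gamma_{j_s}^{(3)}$, not $O(x^{-N})$. If instead you try to make the rest of $\partial W$ stay a fixed positive distance from $\partial\mathbb{D}$ so that $e^{\pm\theta_{il}}$ is exponentially small there, you cannot simultaneously have $\Gamma_{j_s}^{(3)}\subset W$, since $\Gamma_{j_s}^{(3)}$ itself meets $\partial\mathbb{D}$. So the holomorphy/maximum-modulus route, as written, has a genuine gap.

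The mechanism that actually works is the direct pointwise estimate used in the proof of Lemma~\ref{v1av4asmalllemma}, not a maximum-modulus argument. Let $k_\star\in\partial\mathbb{D}$ be the endpoint of $\Gamma_{j_s}^{(3)}$ on the unit circle (e.g.\ $k_\star=k_1$ for $j=5_s$, $k_\star=\omega$ for $j=1_s$). Apply the Taylor-type estimates of Lemma~\ref{decompositionlemma2}(b) to each factor in $\mathcal{C}_j(k)$, expanding at the image of $k_\star$ under the relevant rotation/inversion; these points lie in $\mathcal{R}_1$, which is why $\mathcal{R}_1$ was chosen as it was. The resulting degree-$N$ polynomial in $(k-k_\star)$ is precisely the Taylor polynomial of the exact combination, hence vanishes identically because the exact combination is zero on $\partial\mathbb{D}$. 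What remains is
\[
|\mathcal{C}_j(k)|\le C\,|k-k_\star|^{N+1}\,e^{\frac{x}{4}|\re\tilde\Phi_{il}(\tau,k)|},
\]
and since $k_\star$ is \emph{not} a saddle of the relevant $\tilde\Phi_{il}$ (e.g.\ $k_1$ is a saddle of $\tilde\Phi_{21}$, not of $\tilde\Phi_{31}$), the zero set $\{\re\tilde\Phi_{il}=0\}$ meets $\Gamma_{j_s}^{(3)}$ transversally at $k_\star$ and one has $|\re\tilde\Phi_{il}(\tau,k)|\ge c|k-k_\star|$ uniformly in $\tau\in\mathcal{I}$. Combining these gives
\[
|\mathcal{C}_j(k)e^{\pm\theta_{il}}|\le C\,|k-k_\star|^{N+1}\,e^{-\frac{3}{4}cx|k-k_\star|}\le C\,x^{-N-1},
\]
which is the desired bound. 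The phrase ``it is possible to choose the analytic approximations'' in the statement refers to ensuring that the set $\mathcal{R}_1$ in Lemma~\ref{decompositionlemma2} contains all the points $k_\star$ (and their rotated/inverted images) that arise in this argument, and that the lenses are small enough for the transversality estimate to hold uniformly; no maximum-modulus step is needed.
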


Using Lemmas \ref{vjrsmalllemma}, \ref{v1av4asmalllemma}, and \ref{vjslemma}, we conclude that $v^{(3)}-I=O(x^{-N})$ as $x \to \infty$, uniformly for $k \in \Gamma^{(3)} \setminus \big(\cup_{j=0}^{2}\cup_{\ell=1}^{2} D_{\epsilon}(\omega^{j} k_{\ell}) \cup \partial \D \big)$.

\section{Global parametrix}
For each $\tau \in \mathcal{I}$, let $\delta(\tau, \cdot): \mathbb{C}\setminus \Gamma_5^{(3)} \to \mathbb{C}$ be a function obeying the jump condition
\begin{align*}
& \delta_{+}(\tau, k) = \delta_{-}(\tau, k)(1 + r_{1}(k)r_{2}(k)), \qquad k \in \Gamma_{5}^{(3)},
\end{align*}
and the normalization condition $\delta(\tau, k) = 1 + O(k^{-1})$ as $k \to \infty$. As mentioned earlier (see the text below \eqref{def of rhat}), $1 + r_{1}(k)r_{2}(k)>1$ for all $k \in \Gamma_{5}^{(3)}$. Therefore, by taking the logarithm and using Plemelj's formula, we obtain
\begin{equation} \label{delta1def}
\delta(\tau, k) = \exp \left\{ \frac{-1}{2\pi i} \int_{i}^{k_{1}} \frac{\ln(1 + r_1(s)r_{2}(s))}{s - k} ds \right\}, \qquad k \in \C \setminus \Gamma_{5}^{(3)},
\end{equation}
where the path of integration follows the unit circle in the counterclockwise direction, and the principal branch is used for the logarithm. 

\begin{lemma}\label{IIbis deltalemma}
The function $\delta(\tau, k)$ has the following properties:
\begin{enumerate}[$(a)$]
\item\label{deltaparta}
 $\delta(\tau,k)$ admits the representation
\begin{align}
& \delta(\tau,k) = e^{-i \nu \ln_{k_{1}}(k-k_{1})}e^{-\chi(\tau,k)}, \label{delta expression in terms of log and chi} \\
& \chi(\tau,k) = \frac{-1}{2\pi i} \int_{i}^{k_{1}}  \ln_{s}(k-s) d\ln(1+r_1(s)r_{2}(s)), \label{def of chi}
\end{align}
where the path of integration follows $\partial \mathbb{D}$ in the counterclockwise direction and $\nu \leq 0$ is defined in \eqref{def of nu beta}. 

For $s \in \bar{\Gamma}_{5}^{(3)}$, $k \mapsto \ln_{s}(k-s)=\ln |k-s|+i \arg_{s}(k-s)$ has a cut along $\{e^{i \theta}: \theta \in [\frac{\pi}{2},\arg s], \; \arg s \in [\frac{\pi}{2},\frac{2\pi}{3}]\}\cup(i,i\infty)$ and $\arg_{s}(1)=2\pi$.

\item\label{deltapartb} 
For each $\tau \in \mathcal{I}$, $\delta(\tau, k)$ and $\delta(\tau, k)^{-1}$ are analytic functions of $k \in \mathbb{C}\setminus \Gamma_{5}^{(3)}$. Furthermore,
\begin{align*}
& \sup_{\tau \in \mathcal{I}} \sup_{\substack{k \in \C \setminus \Gamma_{5}^{(3)}}} |\delta(\tau,k)^{\pm 1}| < \infty.
\end{align*}

\item\label{deltapartc} 
For $k \in \partial \mathbb{D}\setminus \Gamma_{5}^{(3)}$, 
\begin{align}\label{re chi on part of the unit circle}
\re \chi(\tau,k) = \frac{\pi + \arg_{i} k}{2}\nu + \frac{1}{2\pi} \int_{\Gamma_{5}^{(3)}} \frac{\arg s}{2} d\ln(1+r_1(s)r_{2}(s)),
\end{align}
where $\arg_i k \in (\frac{\pi}{2},\frac{5\pi}{2})$ and $\arg s \in (-\pi,\pi)$. In particular, for $k \in \partial \mathbb{D}\setminus \Gamma_{5}^{(3)}$, $|\delta(\tau,k)|$ is constant and given by
\begin{align}\label{|delta| is constant on a part of the unit circle}
|\delta(\tau,k)| = \exp \bigg( \nu \frac{\arg k_{1}}{2} - \frac{1}{2\pi} \int_{\Gamma_{5}^{(3)}} \frac{\arg s}{2} d\ln(1+r_1(s)r_{2}(s)) \bigg),
\end{align}
where $\arg k_{1} \in [\frac{\pi}{2},\frac{2\pi}{3}]$.

\item\label{deltapartd} 
As $k \to k_{1}$ along a path which is nontangential to $\Gamma_{5}^{(3)}$, we have
\begin{align}
& |\chi(\tau,k)-\chi(\tau,k_{1})| \leq C |k-k_{1}| (1+|\ln|k- k_{1}||), \label{II asymp chi at k1}
\end{align}
where $C$ is independent of $\tau \in \mathcal{I}$.  

\item\label{deltaparte}
 As $k \to \infty$,
\begin{align} \label{deltaasymptotics}
\delta(\tau, k) = e^{\frac{1}{2\pi i k} \int_{i}^{k_1} \ln(1 + r_1(s)r_{2}(s)) ds + O(k^{-2})}.
\end{align}

\end{enumerate}
\end{lemma}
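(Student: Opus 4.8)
Everything flows from the representation \eqref{delta expression in terms of log and chi}, which I would obtain from the defining formula \eqref{delta1def} by an integration by parts; the remaining items (b)--(e) are then read off from this representation. Throughout I will use that $\overline{\Gamma}_5^{(3)}$ is a compact arc of $\partial\mathbb D$ that stays bounded away from the points $\pm\omega^{\pm1}$ (since $\arg k_1\le\arg k_1(\tau_{\max})<\tfrac{2\pi}{3}$ for $\tau\in\mathcal I$), so that, by \eqref{r1r2 relation with kbar symmetry}, $s\mapsto 1+r_1(s)r_2(s)=1+\tilde r(s)|r_1(s)|^2$ is smooth and $>1$ on $\overline{\Gamma}_5^{(3)}$, with all relevant bounds uniform in $\tau\in\mathcal I$; and that, by Assumption $(iv)$, $r_1$ vanishes to all orders at $k=i$, so $\ln(1+r_1(s)r_2(s))$ and its derivative vanish to all orders at the endpoint $s=i$.

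\textbf{Part (a).} Fix $k\notin\overline{\Gamma}_5^{(3)}\cup(i,i\infty)$ and integrate by parts in \eqref{delta1def}, using $\tfrac{ds}{s-k}=d_s\big[\ln_s(k-s)\big]$. The key point is that, with the stated $s$-dependent branch, $s\mapsto\ln_s(k-s)$ is continuous along $\Gamma_5^{(3)}$ with $s$-derivative $\tfrac1{s-k}$: as $s$ runs over $\Gamma_5^{(3)}$ the cut $\{e^{i\theta}:\theta\in[\tfrac\pi2,\arg s]\}\cup(i,i\infty)$ of $k\mapsto\ln_s(k-s)$ only sweeps the arc $\Gamma_5^{(3)}$ together with $(i,i\infty)$, hence never crosses the fixed $k$. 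The boundary term at $s=i$ vanishes because $\ln(1+r_1(i)r_2(i))=0$, and the boundary term at $s=k_1$ equals $\ln(1+r_1(k_1)r_2(k_1))\ln_{k_1}(k-k_1)=-2\pi\nu\,\ln_{k_1}(k-k_1)$ by the definition of $\nu$ in \eqref{def of nu beta}; collecting terms and recalling \eqref{def of chi} gives $\delta(\tau,k)=e^{-i\nu\ln_{k_1}(k-k_1)}e^{-\chi(\tau,k)}$ (the apparent $0\cdot\ln_i(k-i)$ at $s=i$ is harmless and the identity manifestly extends analytically to $k=i$, since $\chi(\tau,i)$ is finite by the vanishing of $d\ln(1+r_1r_2)$ at $s=i$). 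It then remains to check the two branch claims: the cut of $k\mapsto\ln_{k_1}(k-k_1)$ is $\{e^{i\theta}:\theta\in[\tfrac\pi2,\arg k_1]\}\cup(i,i\infty)$, and across $(i,i\infty)$ the terms $-i\nu\ln_{k_1}(k-k_1)$ and $-\chi$ jump by $2\pi\nu$ and $-2\pi\nu$ respectively (every $\ln_s$, $s\in\Gamma_5^{(3)}$, jumps by $2\pi i$ there and $\int_{\Gamma_5^{(3)}}d\ln(1+r_1r_2)=-2\pi\nu$), so the product is analytic across $(i,i\infty)$ and across $k=i$, while on $\Gamma_5^{(3)}$ one recovers the jump $\delta_+=\delta_-(1+r_1r_2)$; hence $\delta^{\pm1}$ is analytic precisely on $\mathbb C\setminus\Gamma_5^{(3)}$.

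\textbf{Parts (b) and (c).} Since $\ln(1+r_1(s)r_2(s))$ is real, writing $\ln_s(k-s)=\ln|k-s|+i\arg_s(k-s)$ in \eqref{def of chi} shows that the $\ln|k-s|$-part contributes only to $\im\chi$, so $\re\chi(\tau,k)=-\tfrac1{2\pi}\int_{\Gamma_5^{(3)}}\arg_s(k-s)\,d\ln(1+r_1r_2)$; combined with $\re\big(-i\nu\ln_{k_1}(k-k_1)\big)=\nu\,\arg_{k_1}(k-k_1)$ this gives $|\delta(\tau,k)|=\exp\!\big(\nu\arg_{k_1}(k-k_1)-\re\chi(\tau,k)\big)$. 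For (b): the branch functions $\arg_s(k-s)$ and $\arg_{k_1}(k-k_1)$ take values in fixed bounded intervals, $\nu$ is bounded on $\mathcal I$ (it is a smooth function of $\tau$), and $\int_{\Gamma_5^{(3)}}|d\ln(1+r_1r_2)|$ is uniformly bounded for $\tau\in\mathcal I$; hence $|\delta|^{\pm1}$ is uniformly bounded, and analyticity on $\mathbb C\setminus\Gamma_5^{(3)}$ is part (a) together with the fact that $\delta$ is nonvanishing. For (c): for $k,s\in\partial\mathbb D$, the elementary identity $e^{i\theta}-e^{i\psi}=2i\sin\tfrac{\theta-\psi}{2}e^{i(\theta+\psi)/2}$ together with the normalization $\arg_s(1)=2\pi$ and the cut structure yields, after branch bookkeeping, an explicit affine expression for $\arg_s(k-s)$ in terms of $\arg_i k$ and $\arg s$ on each component of $\partial\mathbb D\setminus\Gamma_5^{(3)}$; inserting this into the formula for $\re\chi$ and using $\int_{\Gamma_5^{(3)}}d\ln(1+r_1r_2)=-2\pi\nu$ produces \eqref{re chi on part of the unit circle}, and \eqref{|delta| is constant on a part of the unit circle} follows because the $\arg_i k$-dependence of $\nu\arg_{k_1}(k-k_1)$ cancels that of $\tfrac{\pi+\arg_i k}{2}\nu$ in $\re\chi$.

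\textbf{Parts (d), (e), and the main obstacle.} For (d) I would write $\chi(\tau,k)-\chi(\tau,k_1)=\tfrac{-1}{2\pi i}\int_{\Gamma_5^{(3)}}\big[\ln_s(k-s)-\ln_s(k_1-s)\big]d\ln(1+r_1r_2)$ and split at $|s-k_1|=2|k-k_1|$: on the far part one uses $|\ln_s(k-s)-\ln_s(k_1-s)|\le C|k-k_1|/|s-k_1|$ and integrates $|s-k_1|^{-1}$ over $|s-k_1|>2|k-k_1|$ to gain the factor $1+|\ln|k-k_1||$; on the near part one bounds $|\ln_s(k-s)|+|\ln_s(k_1-s)|$ crudely by $C(1+|\ln|k-s||+|\ln|k_1-s||)$ and integrates over an interval of length $O(|k-k_1|)$, where the nontangential approach guarantees $|k-s|\ge c(|k-k_1|+|s-k_1|)$ for $s$ near $k_1$ so the logarithmic singularities are controlled; uniformity in $\tau$ comes from the uniform smoothness of $\ln(1+r_1r_2)$ on $\Gamma_5^{(3)}$. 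Part (e) is immediate: expanding $\tfrac1{s-k}=-\tfrac1k+O(k^{-2})$ uniformly for $s$ on the compact arc in \eqref{delta1def} gives \eqref{deltaasymptotics}. I expect the branch bookkeeping to be the only real difficulty—namely verifying in part (a) that the $s$-dependent cuts of the $\ln_s(k-s)$ recombine, after the integration by parts and the extraction of $e^{-i\nu\ln_{k_1}(k-k_1)}$, into a function analytic exactly on $\mathbb C\setminus\Gamma_5^{(3)}$ with the correct jump, and pinning down the additive constant in the affine formula for $\arg_s(k-s)$ used in part (c); the integration by parts itself and the estimates in (b), (d), (e) are routine once the branches are under control.
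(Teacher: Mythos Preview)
Your proposal is correct and follows essentially the same route as the paper: integration by parts for (a), the reality of $\nu$ and the bounded-argument observation for (b), the chord identity $e^{i\theta}-e^{i\psi}=2i\sin\tfrac{\theta-\psi}{2}\,e^{i(\theta+\psi)/2}$ leading to the explicit affine formula $\arg_s(k-s)=\tfrac{\pi+\arg_i k+\arg s}{2}$ for (c), standard near/far splitting estimates for (d), and the geometric-series expansion of $(s-k)^{-1}$ for (e). In fact you supply more detail than the paper does---the paper dispatches (d) with the phrase ``standard estimates'' and (a) with ``it suffices to perform an integration by parts''---so your branch bookkeeping and the splitting argument in (d) are exactly the computations one would expect to see behind those phrases. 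The only cosmetic discrepancy is an orientation convention: the paper's $\Gamma_5^{(3)}$ is oriented from $k_1$ to $i$, whereas you implicitly orient it from $i$ to $k_1$ (so your $\int_{\Gamma_5^{(3)}}d\ln(1+r_1r_2)=-2\pi\nu$ differs from the paper's by a sign), but this is consistent throughout your argument and affects nothing substantive.
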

\begin{proof}
To prove part $(\ref{deltaparta})$, it suffices to perform an integration by parts in \eqref{delta1def}. Part $(\ref{deltapartb})$ is then a consequence of \eqref{delta expression in terms of log and chi} and the fact that $\im \nu = 0$. By \eqref{def of chi} we have, for $k \in \partial \mathbb{D}\setminus \Gamma_{5}^{(3)}$,
\begin{align*}
\re \chi(\tau,k) & = \frac{1}{2\pi} \int_{\Gamma_{5}^{(3)}}  \arg_{s}(k-s) d\ln(1+r_1(s)r_{2}(s)) \\
& = \frac{1}{2\pi} \int_{\Gamma_{5}^{(3)}} \frac{\pi + \arg_{i} k + \arg s}{2} d\ln(1+r_1(s)r_{2}(s)),
\end{align*}
and \eqref{re chi on part of the unit circle} then follows from the assumption that $r_1 = 0$ on $[0, i]$ and the definition \eqref{def of nu beta} of $\nu$. By \eqref{delta expression in terms of log and chi}, 
\begin{align*}
|\delta(\tau,k)| = e^{\nu \arg_{k_{1}}(k-k_{1})}e^{-\re \chi(\tau,k)}.
\end{align*}
We obtain \eqref{|delta| is constant on a part of the unit circle} after substituting \eqref{re chi on part of the unit circle} and $\arg_{k_{1}}(k-k_{1}) = \frac{\pi + \arg_{i} k + \arg k_{1}}{2}$ into the above equation. This finishes the proof of part $(\ref{deltapartc})$. We obtain part $(\ref{deltapartd})$ using \eqref{def of chi} and standard estimates and part $(\ref{deltaparte})$ is immediate from \eqref{delta1def}.
\end{proof}
For $\tau \in \mathcal{I}$ and $k \in \mathbb{C}\setminus \partial \mathbb{D}$, we let $\{\Delta_{jj}(\tau,k)\}_{j=1}^{3}$ be given by
\begin{align*}
\Delta_{33}(\tau,k) = \frac{\delta(\tau,\omega k)}{\delta(\tau,\omega^{2} k)}\frac{\delta(\tau,\frac{1}{\omega^{2} k})}{\delta(\tau,\frac{1}{\omega k})}, \quad \Delta_{11}(\tau,k)=\Delta_{33}(\tau,\omega k), \quad \Delta_{22}(\tau,k)=\Delta_{33}(\tau,\omega^{2} k).
\end{align*}
We have $\Delta_{33}(\tau,\frac{1}{k}) = \Delta_{33}(\tau,k)$ and, using (\ref{deltaasymptotics}),
\begin{align}\nonumber
\Delta_{33}(\tau,k) & = e^{\frac{1}{2\pi i}(\frac{1}{\omega k} - \frac{1}{\omega^2 k} )\int_{i}^{k_1} \ln(1 + r_1(s)r_{2}(s)) (1 + s^{-2}) ds + O(k^{-2})}
	\\ \label{Delta33asymptotics}
& = 1  - \frac{\sqrt{3}}{2\pi k}\int_{i}^{k_1} \ln(1 + r_1(s)r_{2}(s)) (1 + s^{-2}) ds + O(k^{-2}), \qquad k \to \infty.
\end{align}
We define the inverse of the global parametrix by
\begin{align}\label{Deltadef}
\Delta(\tau,k) = \begin{pmatrix}
\Delta_{11}(\tau,k) & 0 & 0 \\
0 & \Delta_{22}(\tau,k) & 0 \\
0 & 0 & \Delta_{33}(\tau,k)
\end{pmatrix}, \quad \tau \in \mathcal{I}, \; k \in \mathbb{C}\setminus \partial \mathbb{D}.
\end{align}
It satisfies the symmetries
\begin{align*}
\Delta(\tau,k) = \mathcal{A}\Delta(\tau,\omega k)\mathcal{A}^{-1} = \mathcal{B}\Delta(\tau,\tfrac{1}{k})\mathcal{B},
\end{align*}
and obeys
\begin{subequations}\label{IIbis jumps Delta11}
\begin{align}
& \Delta_{+}^{-1}(\tau,k) = \Delta_{-}^{-1}(\tau,k) \begin{pmatrix}
1+r_{1}(k)r_{2}(k) & 0 & 0 \\
0 & \frac{1}{1+r_{1}(k)r_{2}(k)} & 0 \\
0 & 0 & 1
\end{pmatrix}, & & k \in \Gamma_{5}^{(3)}, \\
& \Delta_{+}^{-1}(\tau,k) = \Delta_{-}^{-1}(\tau,k), & & k \in \Gamma_{2}^{(3)}\cup \Gamma_{8}^{(3)}.
\end{align}
\end{subequations}
Using \eqref{Deltadef} and Lemma \ref{IIbis deltalemma}, we deduce the following.

\begin{lemma}\label{Deltalemma}
$\Delta(\tau,k)$ and $\Delta(\tau,k)^{-1}$ are uniformly bounded for $k \in \mathbb{C}\setminus \partial \mathbb{D}$ and $\tau \in \mathcal{I}$. Furthermore, $\Delta(\tau,k)=I+O(k^{-1})$ as $k \to \infty$.
\end{lemma}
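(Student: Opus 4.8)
The plan is to read off both statements directly from the diagonal product structure of $\Delta$ together with Lemma \ref{IIbis deltalemma}. For the boundedness, I would first observe that $\Gamma_{5}^{(3)}$ is an arc of the unit circle, so whenever $k \in \mathbb{C}\setminus \partial\mathbb{D}$ all four arguments $\omega k$, $\omega^{2}k$, $\tfrac{1}{\omega^{2}k}$, $\tfrac{1}{\omega k}$ entering the definition of $\Delta_{33}(\tau,k)$ lie in $\mathbb{C}\setminus \partial\mathbb{D}\subset \mathbb{C}\setminus \Gamma_{5}^{(3)}$. Hence each of the factors $\delta(\tau,\cdot)^{\pm 1}$ occurring in $\Delta_{33}(\tau,k)^{\pm 1}$ is controlled by the uniform bound $\sup_{\tau \in \mathcal{I}}\sup_{k \in \mathbb{C}\setminus \Gamma_{5}^{(3)}}|\delta(\tau,k)^{\pm 1}| < \infty$ from Lemma \ref{IIbis deltalemma}(\ref{deltapartb}), so $\Delta_{33}(\tau,k)$ and $\Delta_{33}(\tau,k)^{-1}$ are uniformly bounded for $k \in \mathbb{C}\setminus \partial\mathbb{D}$ and $\tau \in \mathcal{I}$. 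The corresponding bounds for $\Delta_{11}$ and $\Delta_{22}$ then follow from the identities $\Delta_{11}(\tau,k)=\Delta_{33}(\tau,\omega k)$ and $\Delta_{22}(\tau,k)=\Delta_{33}(\tau,\omega^{2}k)$, since $\omega k,\omega^{2}k \in \mathbb{C}\setminus \partial\mathbb{D}$ whenever $k \in \mathbb{C}\setminus \partial\mathbb{D}$. As $\Delta$ is diagonal, this yields the uniform boundedness of $\Delta(\tau,k)$ and $\Delta(\tau,k)^{-1}$.

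For the behavior at infinity, I would invoke the expansion (\ref{Delta33asymptotics}), which already gives $\Delta_{33}(\tau,k) = 1 + O(k^{-1})$ as $k \to \infty$; the coefficient of $k^{-1}$ equals $-\tfrac{\sqrt{3}}{2\pi}\int_{i}^{k_{1}}\ln(1+r_{1}(s)r_{2}(s))(1+s^{-2})\,ds$, which is bounded uniformly in $\tau \in \mathcal{I}$ because the integration path stays on the compact arc $\{e^{i\theta}:\theta\in[\tfrac{\pi}{2},\tfrac{2\pi}{3}]\}$ and $\ln(1+r_{1}r_{2})$ is bounded there, while the $O(k^{-2})$ remainder inherited from (\ref{deltaasymptotics}) is uniform for the same reason. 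Applying once more $\Delta_{11}(\tau,k)=\Delta_{33}(\tau,\omega k)$ and $\Delta_{22}(\tau,k)=\Delta_{33}(\tau,\omega^{2}k)$, and noting that $\omega k,\omega^{2}k \to \infty$ as $k \to \infty$, we obtain $\Delta_{jj}(\tau,k) = 1 + O(k^{-1})$ for $j=1,2,3$, uniformly in $\tau$, i.e.\ $\Delta(\tau,k)=I+O(k^{-1})$ as $k \to \infty$.

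There is essentially no serious obstacle here; the lemma is a bookkeeping consequence of Lemma \ref{IIbis deltalemma} and the symmetries of $\Delta$. The only point deserving a little care is the uniformity in $\tau$ of the estimates at infinity, which I would make precise by expanding $\tfrac{1}{s-k} = \tfrac{1}{k}\sum_{m\geq 0}(s/k)^{m}$ in (\ref{delta1def}) and observing that each coefficient $\int_{i}^{k_{1}}s^{m}\ln(1+r_{1}(s)r_{2}(s))\,ds$ is bounded uniformly in $\tau \in \mathcal{I}$, with the tail of the series controlled for $|k|$ large since $|s|=1$ along the integration path. This gives $\delta(\tau,k) = 1 + O(k^{-1})$ with a constant independent of $\tau$, and the product rule for the diagonal entries of $\Delta$ then transfers the same estimate to $\Delta$ itself.
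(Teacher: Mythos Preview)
Your proof is correct and is exactly the argument the paper has in mind: the paper merely records that the lemma follows from the definition (\ref{Deltadef}) and Lemma \ref{IIbis deltalemma}, and you have written out precisely those details. There is nothing to add.
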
 

\section{The $n^{(3)}\to n^{(4)}$ transformation}
The purpose of the fourth transformation $n^{(3)}\to n^{(4)}$ is to make $v^{(4)}-I$ uniformly small as $x \to \infty$ for $|k|=1$. Let $n^{(4)}$ be given by
\begin{align}\label{IIbis def of mp3p}
n^{(4)}(x,t,k) = n^{(3)}(x,t,k)\Delta(\tau,k), \qquad  k \in \mathbb{C}\setminus \Gamma^{(4)},
\end{align}
where $\Gamma^{(4)}=\Gamma^{(3)}$. This function satisfies $n_{+}^{(4)}=n_{-}^{(4)}v^{(4)}$ on $\Gamma^{(4)}$, where $v^{(4)}=\Delta_{-}^{-1}v^{(3)}\Delta_{+}$. We define a small cross centered at $k_{1}$ by $\mathcal{X}^{\epsilon}:= \cup_{j=1}^{4}\mathcal{X}_{j}^{\epsilon}$, where
\begin{align*}
&\mathcal{X}_{1}^{\epsilon} = \Gamma_{4}^{(4)}\cap D_{\epsilon}(k_{1}), & & \mathcal{X}_{2}^{\epsilon} = \Gamma_{7}^{(4)}\cap D_{\epsilon}(k_{1}), & & \mathcal{X}_{3}^{\epsilon} = \Gamma_{9}^{(4)}\cap D_{\epsilon}(k_{1}), & & \mathcal{X}_{4}^{\epsilon} = \Gamma_{6}^{(4)}\cap D_{\epsilon}(k_{1}),
\end{align*}
are oriented outwards from $k_1$. We also define the union of six small crosses centered at $\{\omega^{j}k_{1},\omega^{j}k_{2}\}_{j=0}^{2}$ by $\hat{\mathcal{X}}^\epsilon = \mathcal{X}^\epsilon \cup \omega \mathcal{X}^\epsilon \cup \omega^2 \mathcal{X}^\epsilon \cup (\mathcal{X}^\epsilon)^{-1} \cup (\omega \mathcal{X}^\epsilon)^{-1} \cup (\omega^2 \mathcal{X}^\epsilon)^{-1}$.

\begin{lemma}\label{II v3lemma}
$v^{(4)}$ converges to the identity matrix $I$ as $x \to \infty$ uniformly for $\tau \in \mathcal{I}$ and $k \in \Gamma^{(4)}\setminus \hat{\mathcal{X}}^\epsilon$. More precisely, for $\tau \in \mathcal{I}$,
\begin{align}\label{II v3estimatesa}
& \|v^{(4)} - I\|_{(L^1 \cap L^\infty)(\Gamma^{(4)}\setminus \hat{\mathcal{X}}^\epsilon)} \leq Cx^{-N}.
\end{align}
\end{lemma}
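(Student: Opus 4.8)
The plan is to estimate $v^{(4)} - I = \Delta_-^{-1}v^{(3)}\Delta_+ - I$ on $\Gamma^{(4)}\setminus\hat{\mathcal{X}}^\epsilon = \Gamma^{(3)}\setminus\hat{\mathcal{X}}^\epsilon$ piece by piece, using throughout that $\Delta^{\pm 1}$ and its boundary values on $\Gamma_5^{(3)}$ are uniformly bounded for $\tau \in \mathcal{I}$ (Lemma \ref{Deltalemma} together with Lemma \ref{IIbis deltalemma}) and that $v^{(3)} - I$ is $O(x^{-N})$ in $L^1\cap L^\infty$ away from the saddle-point disks. Since $v^{(4)}$ inherits the symmetries \eqref{vjsymm} from those of $v^{(3)}$ and $\Delta$, and since $\mathsf{S}$ and its images under the $\mathcal{A},\mathcal{B}$-symmetries cover $\C$, it is enough to establish \eqref{II v3estimatesa} for the part of $\Gamma^{(4)}\setminus\hat{\mathcal{X}}^\epsilon$ lying in $\mathsf{S}$; all the bounds invoked below are uniform in $\tau \in \mathcal{I}$, so the conclusion will be as well.

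Consider first a subcontour of $\Gamma^{(4)}\cap\mathsf{S}$ that does not lie on $\partial\D$. There $\Delta$ is analytic, so $\Delta_- = \Delta_+ = \Delta$ and $v^{(4)} - I = \Delta^{-1}(v^{(3)} - I)\Delta$; by the boundedness of $\Delta^{\pm 1}$ it remains to recall that $\|v^{(3)} - I\|_{(L^1\cap L^\infty)} = O(x^{-N})$ on each such piece. This is provided by Lemma \ref{vjrsmalllemma} on $\Gamma_{1_r}^{(3)},\dots,\Gamma_{4_r}^{(3)}$, by Lemma \ref{v1av4asmalllemma} on $\Gamma_{1_a}^{(3)}\cup\Gamma_{4_a}^{(3)}$, by Lemma \ref{vjslemma} on $\Gamma_{1_s}^{(3)},\dots,\Gamma_{6_s}^{(3)}$, and by the estimate recorded just after Lemma \ref{vjslemma} on the remaining lens-type pieces lying outside $\cup_{j,\ell}D_\epsilon(\omega^j k_\ell)$ (the contours in these last two families have bounded length, so the $L^\infty$-bounds there upgrade to $L^1\cap L^\infty$-bounds). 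The key structural point is that Lemmas \ref{v1av4asmalllemma} and \ref{vjslemma} control $v^{(3)} - I$ on the \emph{entire} contours $\Gamma_{1_a}^{(3)},\Gamma_{4_a}^{(3)},\Gamma_{j_s}^{(3)}$, not merely outside the saddle-point disks; this is precisely why excising only the cross $\hat{\mathcal{X}}^\epsilon$ rather than full disks around the six saddle points suffices.

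It remains to treat the three unit-circle arcs $\Gamma_2^{(3)},\Gamma_5^{(3)},\Gamma_8^{(3)}$. On $\Gamma_2^{(3)}$ and $\Gamma_8^{(3)}$, $\Delta$ has no jump by \eqref{IIbis jumps Delta11}, so again $v^{(4)} - I = \Delta^{-1}(v^{(3)} - I)\Delta$; since $v_j^{(3)} = v_j^{(2)} = I + v_{j,r}^{(2)}$ with $\|v_{j,r}^{(2)}\|_{(L^1\cap L^\infty)} = O(x^{-N})$ for $j = 2,8$ (see Section \ref{n1ton2sec}), the desired bound follows. On $\Gamma_5^{(3)}$, where $\Delta$ does jump, the crucial feature is a cancellation: writing $v_5^{(3)} = v_5^{(2)} = D + v_{5,r}^{(2)}$ with $D = \diag(1 + r_1 r_2, (1 + r_1 r_2)^{-1}, 1)$ and $\|v_{5,r}^{(2)}\|_{(L^1\cap L^\infty)} = O(x^{-N})$, and using that $\Delta$ is diagonal (hence commutes with $D$) and satisfies $\Delta_+^{-1} = \Delta_-^{-1}D$ on $\Gamma_5^{(3)}$, we obtain $\Delta_-^{-1}D\Delta_+ = \Delta_+^{-1}\Delta_+ = I$, so that $v^{(4)} - I = \Delta_-^{-1}v_{5,r}^{(2)}\Delta_+ = O(x^{-N})$ in $L^1\cap L^\infty$.

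Summing these finitely many $O(x^{-N})$ bounds over the pieces of $\Gamma^{(4)}\cap\mathsf{S}\setminus\hat{\mathcal{X}}^\epsilon$ and invoking the symmetry gives \eqref{II v3estimatesa}. I expect no serious obstacle: the only step beyond bookkeeping is the identity $\Delta_-^{-1}D\Delta_+ = I$ on $\Gamma_5^{(3)}$, and this is immediate from the construction of the global parametrix, since $\Delta$ is diagonal and carries precisely the jump $\diag(1+r_1r_2,(1+r_1r_2)^{-1},1)$. The remaining care is simply in noting that $\Gamma_5^{(3)}$ is the only arc on which $\Delta$ jumps and in matching each piece of the deformed contour to the earlier lemma that controls it — in other words, the lemma is essentially a consolidation of the smallness estimates already established, transported through the diagonal conjugation by $\Delta$.
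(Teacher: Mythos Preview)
Your argument is correct and follows essentially the same route as the paper's proof: on the three unit-circle arcs you use the jump relations \eqref{IIbis jumps Delta11} for $\Delta$ to reduce $v^{(4)}-I$ to $\Delta_-^{-1}v_{j,r}^{(2)}\Delta_+$ (the paper writes this uniformly as $v_j^{(4)}=I+\Delta_-^{-1}v_{j,r}^{(3)}\Delta_+$ for $j=2,5,8$), and off the unit circle you conjugate the existing $O(x^{-N})$ bounds on $v^{(3)}-I$ by the bounded diagonal $\Delta$, then invoke the symmetry \eqref{vjsymm}. Your explicit remark that Lemmas \ref{v1av4asmalllemma} and \ref{vjslemma} control the \emph{entire} contours $\Gamma_{1_a}^{(3)},\Gamma_{4_a}^{(3)},\Gamma_{j_s}^{(3)}$, and that the unit-circle arcs are likewise controlled throughout, is exactly why excising only $\hat{\mathcal{X}}^\epsilon$ suffices; the paper leaves this implicit.
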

\begin{proof}
Using \eqref{vp1p 123}, \eqref{vp1p 789}, \eqref{vp1p 101112}, and \eqref{II jumps vj diagonal p2p} together with the jumps \eqref{IIbis jumps Delta11} of $\Delta$, we obtain
\begin{align*}
& v_j^{(4)} = \Delta_{-}^{-1}v_j^{(3)}\Delta_{+} = I + \Delta_{-}^{-1}v_{j,r}^{(3)}\Delta_{+}, \quad j = 2,5,8.
\end{align*}
Using Lemmas \ref{decompositionlemma2} and \ref{Deltalemma}, we infer that the matrices $\Delta_{-}^{-1}v_{j,r}^{(3)}\Delta_{+}$, $j = 2,5,8$, are $O(x^{-N})$ as $x \to \infty$.
Using also Lemmas \ref{vjrsmalllemma}, \ref{v1av4asmalllemma}, and \ref{vjslemma}, we conclude that the $L^1$ and $L^\infty$ norms of $v^{(4)}-I$ are $O(x^{-N})$ as $x \to \infty$, uniformly for $\tau \in \mathcal{I}$ and $k \in \mathsf{S}\cap \Gamma^{(4)}\setminus \hat{\mathcal{X}}^\epsilon$. Then \eqref{II v3estimatesa} directly follows from \eqref{vjsymm}.
\end{proof}

The matrix $v^{(4)}-I$ is not uniformly small on $\hat{\mathcal{X}}^\epsilon$. The goal of the next section is to build a local parametrix that approximates $n^{(4)}$ in a neighborhood of $k_{1}$. The local parametrices near $\{k_{2},\omega k_{1},\omega k_{2},\omega^{2}k_{1},\omega^{2}k_{2}\}$ will then be obtained using the $\mathcal{A}$- and $\mathcal{B}$-symmetries. 

\section{Local parametrix near $k_{1}$}\label{section:loc param 1}
As $k \to k_{1}$, we have
\begin{align*}
& \tilde{\Phi}_{21}(\tau,k)-\tilde{\Phi}_{21}(\tau,k_{1}) =  \tilde{\Phi}_{21,k_{1}}(k-k_{1})^{2} +O((k-k_{1})^{3}), & & \tilde{\Phi}_{21,k_{1}}= \frac{4\tau -3k_{1}  - k_{1}^{3}}{4k_{1}^{4}}.
\end{align*}
Let $z=z(x,\tau,k)$ be given by
\begin{align*}
z=z_{\star}\sqrt{x} (k-k_{1}) \hat{z}, \qquad \hat{z}= \sqrt{\frac{2i(\tilde{\Phi}_{21}(\tau,k)-\tilde{\Phi}_{21}(\tau,k_{1}))}{z_{\star}^{2}(k-k_{1})^{2}}},
\end{align*}
where the principal branch is taken for $\hat{z}=\hat{z}(\tau,k)$, and 
\begin{align*}
z_{\star} = \sqrt{2} e^{\frac{\pi i}{4}} \sqrt{\tilde{\Phi}_{21,k_{1}}}, \qquad -ik_{1}z_{\star}>0.
\end{align*}
We have $\hat{z}(\tau,k_{1})=1$, and $x(\tilde{\Phi}_{21}(\tau,k)-\tilde{\Phi}_{21}(\tau,k_{1})) = -\frac{iz^{2}}{2}$. We fix $\epsilon>0$ and choose it so small that the function $z$ is conformal from $D_{\epsilon}(k_{1})$ to a neighborhood of 0. As $k \to k_{1}$,
\begin{align*}
z = z_{\star}\sqrt{x}(k-k_{1})(1+O(k-k_{1})),
\end{align*}
and for all $k \in D_\epsilon(k_1)$, we have
\begin{align*}
\ln_{k_{1}}(k-k_{1}) = \ln_{0}[z_{\star}(k-k_{1})\hat{z}]- \ln \hat{z} -\ln z_{\star}
\end{align*}
where $\ln_{0}(k):= \ln|k|+i\arg_{0}k$, $\arg_{0}(k)\in (0,2\pi)$, and $\ln$ is the principal logarithm. Reducing $\epsilon > 0$ if needed, $k \mapsto \ln \hat{z}$ is analytic in $D_{\epsilon}(k_{1})$, $\ln \hat{z} = O(k-k_{1})$ as $k \to k_{1}$, and
\begin{align*}
\ln z_{\star} = \ln |z_{\star}| + i \arg z_{\star} = \ln |z_{\star}| + i \big( \tfrac{\pi}{2}-\arg k_{1} \big),
\end{align*}
where $\arg k_{1} \in (\tfrac{\pi}{2},\tfrac{2\pi}{3})$. By \eqref{delta expression in terms of log and chi}, we have
\begin{align*}
& \delta(\tau,k) = e^{-i\nu\ln_{k_{1}}(k-k_{1})}e^{-\chi(\tau,k)} = e^{-i\nu \ln_{0} z} x^{\frac{i \nu}{2}} e^{i \nu \ln \hat{z}} e^{i \nu \ln z_{\star}} e^{-\chi(\tau,k)}.
\end{align*}
From now on, for brevity we will use the notation $z_{(0)}^{- i \nu}:=e^{-i\nu \ln_{0} z}$, $\hat{z}^{i\nu}:=e^{i \nu \ln \hat{z}}$, and $z_{\star}^{i \nu}:=e^{i \nu \ln z_{\star}}$, so that
\begin{align*}
\delta(\tau,k) = z_{(0)}^{- i \nu} x^{\frac{i \nu}{2}} \hat{z}^{i\nu} z_{\star}^{i \nu} e^{-\chi(\tau,k)}.
\end{align*}
Using this, we get
\begin{align}
& \frac{\Delta_{11}(\tau,k)}{\Delta_{22}(\tau,k)} = \frac{\delta(\frac{1}{k})^{2} \delta(\omega k)\delta(\omega^{2}k)}{\delta(k)^{2}\delta(\frac{1}{\omega^{2}k})\delta(\frac{1}{\omega k})} = z_{(0)}^{2i \nu}d_{0}(x,\tau)d_{1}(\tau,k), \label{IIbis Delta33 Delta11} 
	\\ \label{d0def}
& d_{0}(x,\tau) = e^{2\chi(\tau,k_{1})}x^{-i \nu} z_{\star}^{-2i\nu} \frac{\delta(\frac{1}{k_{1}})^{2} \delta(\omega k_{1})\delta(\omega^{2}k_{1})}{\delta(\frac{1}{\omega^{2}k_{1}})\delta(\frac{1}{\omega k_{1}})}, 
	 \\
& d_{1}(\tau,k) = e^{2(\chi(\tau, k)-\chi(\tau, k_{1}))} \hat{z}^{-2i\nu} \frac{\delta(\frac{1}{k})^{2} \delta(\omega k)\delta(\omega^{2}k)}{\delta(\frac{1}{\omega^{2}k})\delta(\frac{1}{\omega k})} \bigg( \frac{\delta(\frac{1}{k_{1}})^{2} \delta(\omega k_{1})\delta(\omega^{2}k_{1})}{\delta(\frac{1}{\omega^{2}k_{1}})\delta(\frac{1}{\omega k_{1}})} \bigg)^{-1}. \nonumber
\end{align}
Define
\begin{align*}
Y(x, \tau) = d_{0}(x, \tau)^{-\frac{\tilde{\sigma}_{3}}{2}}\tilde{r}(k_{1})^{-\frac{1}{4}\tilde{\sigma}_{3}}e^{-\frac{x}{2}\tilde{\Phi}_{21}(\tau,k_{1})\tilde{\sigma}_{3}},
\end{align*}
where $\tilde{\sigma}_{3} = \diag (1,-1,0)$ and  $\tilde{r}(k_{1})^{\pm \frac{1}{4}}>0$. For $k \in D_\epsilon(k_1)$, we define
\begin{align}\label{ntildedef}
\tilde{n}(x,t,k) = n^{(4)}(x,t,k)Y(x, \tau).
\end{align}
Let $\tilde{v}$ be the jump matrix of $\tilde{n}$, and let $\tilde{v}_{j}$ be the restriction of $\tilde{v}$ to $\Gamma_{j}^{(4)}\cap D_{\epsilon}(k_{1})$. By \eqref{Vv2def}, \eqref{IIbis def of mp3p}, and \eqref{IIbis Delta33 Delta11}, we have
\begin{align*}
& \tilde{v}_{4} = \begin{pmatrix}
1 & \tilde{r}(k_{1})^{\frac{1}{2}}\hat{r}_{1,a}(k) d_{1}^{-1}z_{(0)}^{-2i\nu}e^{\frac{iz^{2}}{2}} & 0 \\
0 & 1 & 0 \\
0 & 0 & 1
\end{pmatrix}, & & \tilde{v}_{7} = \begin{pmatrix}
1 & 0 & 0 \\
-\tilde{r}(k_{1})^{-\frac{1}{2}}r_{2,a}(k) d_{1}z_{(0)}^{2i\nu} e^{-\frac{iz^{2}}{2}} & 1 & 0 \\
0 & 0 & 1
\end{pmatrix}, \\
& \tilde{v}_{9} = \begin{pmatrix}
1 & -\tilde{r}(k_{1})^{\frac{1}{2}}r_{1,a}(k)d_{1}^{-1}z_{(0)}^{-2i\nu}e^{\frac{iz^{2}}{2}} & 0 \\
0 & 1 & 0 \\
0 & 0 & 1 
\end{pmatrix}, & & \tilde{v}_{6} = \begin{pmatrix}
1 & 0 & 0 \\
\tilde{r}(k_{1})^{-\frac{1}{2}}\hat{r}_{2,a}(k)d_{1}z_{(0)}^{2i\nu}e^{-\frac{iz^{2}}{2}} & 1 & 0 \\
0 & 0 & 1
\end{pmatrix}.
\end{align*}
Define $q:=-\tilde{r}(k_{1})^{-\frac{1}{2}}r_{2}(k_{1})$, and note that $-\tilde{r}(k_{1})^{\frac{1}{2}}r_{1}(k_{1})=\bar{q}$, $1+r_{1}(k_{1})r_{2}(k_{1}) = 1+|q|^{2}$ and $\nu = -\frac{1}{2\pi}\ln (1+|q|^{2})$. We also have $d_{1}(\tau,k_{1}) = 1$. Hence, we expect that $\tilde{n}$ should be close to $(1,1,1)Y\tilde{m}^{k_{1}}$, where $\tilde{m}^{k_{1}}(x,t,k)$ is analytic for $k \in D_{\epsilon}(k_{1}) \setminus \mathcal{X}^{\epsilon}$, satisfies $\tilde{m}^{k_{1}}_+ = \tilde{m}^{k_{1}}_- \tilde{v}_{\mathcal{X}^{\epsilon}}^{X}$ on $\mathcal{X}^{\epsilon}$ with
\begin{align*}
& \tilde{v}_{\mathcal{X}_{1}^{\epsilon}}^{X} = \begin{pmatrix}
1 & \frac{\tilde{r}(k_{1})^{\frac{1}{2}}r_{1}(k_{1})}{1+r_{1}(k_{1})r_{2}(k_{1})}z_{(0)}^{-2i\nu}e^{\frac{iz^{2}}{2}} & 0 \\
0 & 1 & 0 \\
0 & 0 & 1
\end{pmatrix} = \begin{pmatrix}
1 & \frac{-\bar{q}}{1+|q|^{2}}z_{(0)}^{-2i\nu}e^{\frac{iz^{2}}{2}} & 0 \\
0 & 1 & 0 \\
0 & 0 & 1
\end{pmatrix}, \\
& \tilde{v}_{\mathcal{X}_{2}^{\epsilon}}^{X} = \begin{pmatrix}
1 & 0 & 0 \\
-\tilde{r}(k_{1})^{-\frac{1}{2}}r_{2}(k_{1}) z_{(0)}^{2i\nu} e^{-\frac{iz^{2}}{2}} & 1 & 0 \\
0 & 0 & 1
\end{pmatrix} = \begin{pmatrix}
1 & 0 & 0 \\
q z_{(0)}^{2i\nu} e^{-\frac{iz^{2}}{2}} & 1 & 0 \\
0 & 0 & 1
\end{pmatrix}, \\
& \tilde{v}_{\mathcal{X}_{3}^{\epsilon}}^{X} = \begin{pmatrix}
1 & -\tilde{r}(k_{1})^{\frac{1}{2}}r_{1}(k_{1}) z_{(0)}^{-2i\nu}e^{\frac{iz^{2}}{2}} & 0 \\
0 & 1 & 0 \\
0 & 0 & 1 
\end{pmatrix} = \begin{pmatrix}
1 & \bar{q} z_{(0)}^{-2i\nu}e^{\frac{iz^{2}}{2}} & 0 \\
0 & 1 & 0 \\
0 & 0 & 1 
\end{pmatrix}, \\
& \tilde{v}_{\mathcal{X}_{4}^{\epsilon}}^{X} = \begin{pmatrix}
1 & 0 & 0 \\
\frac{\tilde{r}(k_{1})^{-\frac{1}{2}}r_{2}(k_{1})}{1+r_{1}(k_{1})r_{2}(k_{1})} z_{(0)}^{2i\nu} e^{-\frac{iz^{2}}{2}} & 1 & 0 \\
0 & 0 & 1
\end{pmatrix} = \begin{pmatrix}
1 & 0 & 0 \\
\frac{-q}{1+|q|^{2}} z_{(0)}^{2i\nu} e^{-\frac{iz^{2}}{2}} & 1 & 0 \\
0 & 0 & 1
\end{pmatrix},
\end{align*}
and is such that $\tilde{m}^{k_{1}}(x,t,k) \to I$ as $t\to\infty$ uniformly for $k\in \partial D_{\epsilon}(k_{1})$.
Let $m^{k_{1}}$ be given by
\begin{align}\label{mk1def}
m^{k_{1}}(x,t,k) = Y(x, \tau)m^{X}(q,z(x, \tau,k))Y(x, \tau)^{-1}, \qquad k \in D_\epsilon(k_1),
\end{align}
where $m^{X}$ is the unique solution to the model RH problem of Lemma \ref{IIbis Xlemma 3}.

\begin{lemma}\label{lemma: bound on Y}
The function $Y(x, \tau)$ is uniformly bounded:
\begin{align}\label{Ybound}
\sup_{\tau \in \mathcal{I}} \sup_{x \geq 2} | Y(x, \tau)^{\pm 1}| \leq C.
\end{align}
Furthermore, $d_0(x, \tau)$ and $d_1(\tau, k)$ satisfy
\begin{align}
& |d_0(x, \tau)| = e^{2\pi \nu}, & & x \geq 2, \  \tau \in \mathcal{I}, \label{d0estimate} \\
& |d_1(\tau, k) - 1| \leq C |k - k_1| (1+ |\ln|k-k_1||), & & \tau \in \mathcal{I}, \ k \in \mathcal{X}^{\epsilon}.\label{d1estimate}
\end{align}
\end{lemma}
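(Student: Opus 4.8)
The plan is to prove the three assertions in the order \eqref{d0estimate}, \eqref{Ybound}, \eqref{d1estimate}.

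First I would establish \eqref{d0estimate} by taking moduli in the defining formula \eqref{d0def}. Since $\nu \in \R$ we have $|x^{-i\nu}| = 1$, and since $\arg z_{\star} = \tfrac{\pi}{2} - \arg k_{1}$ we have $|z_{\star}^{-2i\nu}| = e^{\nu(\pi - 2\arg k_{1})}$. For the remaining factors one uses that, because $\arg k_{1} \in [\tfrac{\pi}{2}, \arg k_{1}(\tau_{\max})] \subset [\tfrac{\pi}{2}, \tfrac{2\pi}{3})$ for $\tau \in \mathcal{I}$, each of the points $\tfrac{1}{k_{1}}, \omega k_{1}, \omega^{2}k_{1}, \tfrac{1}{\omega^{2}k_{1}}, \tfrac{1}{\omega k_{1}}$ lies on $\partial\mathbb{D}\setminus\Gamma_{5}^{(3)}$, at a distance from $\Gamma_{5}^{(3)}$ that is bounded below uniformly in $\tau$ (this is where the hypothesis $\tau_{\max}<1$ is needed, via $\tfrac{1}{\omega k_{1}}$). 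Then Lemma \ref{IIbis deltalemma}(\ref{deltapartc}) shows $|\delta(\tau,\cdot)|$ takes the constant value $\rho := e^{\nu\arg k_{1}/2 - J}$ at all of these points, where $J := \tfrac{1}{2\pi}\int_{\Gamma_{5}^{(3)}}\tfrac{\arg s}{2}\, d\ln(1+r_{1}(s)r_{2}(s))$, so that the $\delta$-ratio in \eqref{d0def} has modulus $\rho^{2}$. Moreover, letting $k \to k_{1}$ in \eqref{re chi on part of the unit circle} and using that $\tfrac{1}{2\pi}\int_{\Gamma_{5}^{(3)}}d\ln(1+r_{1}r_{2}) = \nu$ — which follows from $r_{1}\equiv 0$ on $[0,i]$ (so $\ln(1+r_{1}(i)r_{2}(i))=0$) together with the definition \eqref{def of nu beta} of $\nu$ — gives $\re\chi(\tau,k_{1}) = \tfrac{\pi+\arg k_{1}}{2}\nu + J$. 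Substituting these computations into $\ln|d_{0}| = 2\re\chi(\tau,k_{1}) + \nu(\pi - 2\arg k_{1}) + (\nu\arg k_{1} - 2J)$, the $J$-terms cancel and the $\arg k_{1}$-terms cancel, leaving $\ln|d_{0}| = 2\pi\nu$.

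Next I would deduce \eqref{Ybound}. As $Y(x,\tau)$ is diagonal it suffices to bound its entries and their reciprocals. The factor $d_{0}^{-\tilde{\sigma}_{3}/2}$ has entries of moduli $e^{\mp\pi\nu}$ and $1$ by \eqref{d0estimate}, and $\nu$ is bounded because $1 \le 1 + r_{1}(k_{1})r_{2}(k_{1}) = 1 + \tilde{r}(k_{1})|r_{1}(k_{1})|^{2}$ is bounded above uniformly in $\tau$: indeed $\tilde{r}$ and $r_{1}$ are bounded on the compact arc $\{e^{i\theta} : \theta \in [\tfrac{\pi}{2}, \arg k_{1}(\tau_{\max})]\}$, which avoids the pole $e^{i\pi/3}$ and the zero $\omega$ of $\tilde{r}$. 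The factor $\tilde{r}(k_{1})^{-\frac14\tilde{\sigma}_{3}}$ is controlled in the same way, using additionally that $\tilde{r}>0$ on that arc. Finally, $|k_{1}|=1$ forces $\tilde{\Phi}_{21}(\tau,k_{1})$ to be purely imaginary, so $|e^{-\frac{x}{2}\tilde{\Phi}_{21}(\tau,k_{1})\tilde{\sigma}_{3}}| = 1$; hence all entries of $Y^{\pm1}$ are uniformly bounded.

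For \eqref{d1estimate} I would write $d_{1} = ABC$, where $A := e^{2(\chi(\tau,k)-\chi(\tau,k_{1}))}$, $B := \hat{z}^{-2i\nu}$, and $C$ is the quotient of $\tfrac{\delta(1/k)^{2}\delta(\omega k)\delta(\omega^{2}k)}{\delta(1/(\omega^{2}k))\delta(1/(\omega k))}$ by its value at $k=k_{1}$. Each of $A,B,C$ equals $1$ at $k_{1}$ (re-proving $d_{1}(\tau,k_{1})=1$) and each is bounded on $\mathcal{X}^{\epsilon}$. For $A$, the four rays of $\mathcal{X}^{\epsilon}$ — pieces of $\Gamma_{4}^{(4)}, \Gamma_{7}^{(4)}, \Gamma_{9}^{(4)}, \Gamma_{6}^{(4)}$ — meet $k_{1}$ nontangentially to $\Gamma_{5}^{(3)}$, so Lemma \ref{IIbis deltalemma}(\ref{deltapartd}) gives $|A-1| \le C|k-k_{1}|(1+|\ln|k-k_{1}||)$. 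For $B$, since $\hat{z}$ is analytic in $D_{\epsilon}(k_{1})$ with $\hat{z}(\tau,k_{1})=1$ we have $\ln\hat{z}=O(k-k_{1})$ uniformly in $\tau$, hence $|B-1| \le C|k-k_{1}|$ (as $\nu$ is bounded). For $C$, after possibly decreasing $\epsilon$ the five arguments $\tfrac1k, \omega k, \omega^{2}k, \tfrac{1}{\omega^{2}k}, \tfrac{1}{\omega k}$ stay, for $k \in D_{\epsilon}(k_{1})$ and $\tau \in \mathcal{I}$, in fixed neighborhoods of their limits that are bounded away from $\Gamma_{5}^{(3)}$; there $\delta(\tau,\cdot)$ is analytic, bounded, and bounded away from $0$ with uniform constants (the last from \eqref{delta1def} and the resulting bound on $|\partial_{k}\delta|$ off $\Gamma_{5}^{(3)}$), so $C$ is smooth with $C(k_{1})=1$ and $|C-1| \le C|k-k_{1}|$. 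Then $|d_{1}-1| = |ABC-1| \le |A-1|\,|BC| + |B-1|\,|C| + |C-1|$ yields \eqref{d1estimate}.

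I expect the main obstacle to be the exact cancellation in \eqref{d0estimate}: one must be scrupulous about the branches of $\arg$ and $\ln$ in Lemma \ref{IIbis deltalemma} when extending \eqref{re chi on part of the unit circle} to the endpoint $k_{1}$ of $\Gamma_{5}^{(3)}$ and when reading off $\arg z_{\star}$, and about the fact that the integral defining $\chi$ runs from $i$ to $k_{1}$ while $\Gamma_{5}^{(3)}$ is oriented from $k_{1}$ to $i$. The uniformity in $\tau$, including the limit $\tau \to 0$ where $k_{1}\to i$ and $\nu \to 0$, is otherwise routine, since $\tilde{\Phi}_{21,k_{1}}$, $z_{\star}$, and $\tilde{r}(k_{1})$ have nonzero limits there and everything varies smoothly over the compact parameter range.
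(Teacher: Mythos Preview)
Your proposal is correct and follows essentially the same approach as the paper's proof: compute $|d_0|$ by taking moduli in \eqref{d0def} and combining the explicit formulas \eqref{re chi on part of the unit circle} and \eqref{|delta| is constant on a part of the unit circle} of Lemma~\ref{IIbis deltalemma}(\ref{deltapartc}) with $\arg z_\star=\tfrac{\pi}{2}-\arg k_1$, and deduce \eqref{Ybound} and \eqref{d1estimate} from Lemma~\ref{IIbis deltalemma}. The paper records the computation for \eqref{d0estimate} in the same way (its equation (9.5) is your $\rho^{2}$-identity) and simply states that \eqref{Ybound} and \eqref{d1estimate} are ``direct consequences'' of Lemma~\ref{IIbis deltalemma}; your decomposition $d_1=ABC$ and the observation that $\tilde{\Phi}_{21}(\tau,k_1)\in i\R$ make those deductions explicit but do not change the argument.
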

\begin{proof}
The inequalities \eqref{Ybound} and \eqref{d1estimate} are direct consequences of Lemma \ref{IIbis deltalemma}. We now turn to the proof of \eqref{d0estimate}. Recall that $\nu \in \mathbb{R}$. Hence, by \eqref{d0def},
\begin{align}\label{lol3}
|d_{0}(x, \tau)| = e^{2 \, \re\chi(\tau,k_{1})} e^{2\nu \arg z_{\star}} \left|\frac{\delta(\frac{1}{k_{1}})^{2} \delta(\omega k_{1})\delta(\omega^{2}k_{1})}{\delta(\frac{1}{\omega^{2}k_{1}})\delta(\frac{1}{\omega k_{1}})} \right|.
\end{align}
Recall also that $\arg z_{\star} = \tfrac{\pi}{2}-\arg k_{1}$. Using \eqref{|delta| is constant on a part of the unit circle}, we get
\begin{align}\label{lol1}
\left|\frac{\delta(\frac{1}{k_{1}})^{2} \delta(\omega k_{1})\delta(\omega^{2}k_{1})}{\delta(\frac{1}{\omega^{2}k_{1}})\delta(\frac{1}{\omega k_{1}})} \right| = \exp \bigg( \nu \arg k_{1} - \frac{1}{2\pi} \int_{\Gamma_{5}^{(3)}} \arg s \; d\ln(1+r_1(s)r_{2}(s)) \bigg).
\end{align}
We find \eqref{d0estimate} after substituting \eqref{re chi on part of the unit circle} (with $k$ replaced by $k_1$) and \eqref{lol1} into \eqref{lol3}. 
\end{proof}

In the statement of the next lemma, and in what follows, $C_N(\tau)$ denotes a generic nonnegative smooth function of $\tau$ that vanishes to all orders at $\tau = 0$.

\begin{lemma}\label{k0lemma}
For each $x \geq 2$ and $\tau \in \mathcal{I}$, $m^{k_1}(x,t,k)$ defined in (\ref{mk1def}) is analytic for $k \in D_\epsilon(k_1) \setminus \mathcal{X}^\epsilon$. Moreover, $m^{k_1}(x,t,k)$ is uniformly bounded for $x \geq 2$, $\tau \in \mathcal{I}$ and $k \in D_\epsilon(k_1) \setminus \mathcal{X}^\epsilon$.
On $\mathcal{X}^\epsilon$, $m^{k_1}$ satisfies $m_+^{k_1} =  m_-^{k_1} v^{k_1}$, where the jump matrix $v^{k_1}$ obeys
\begin{align}\label{v4vk1estimate}
\begin{cases}
 \| v^{(4)} - v^{k_1} \|_{L^1(\mathcal{X}^\epsilon)} \leq C_N(\tau) x^{-1} \ln x,
	\\
\| v^{(4)} - v^{k_1} \|_{L^\infty(\mathcal{X}^\epsilon)} \leq C_N(\tau) x^{-1/2} \ln x,
\end{cases} \qquad x \geq 2, \  \tau \in \mathcal{I}.
\end{align}
Furthermore, as $x \to \infty$,
\begin{align}\label{mk1estimate1}
& \| m^{k_1}(x,t,\cdot) - I \|_{L^\infty(\partial D_\epsilon(k_1))} \leq C_N(\tau) x^{-1/2},
	\\ \label{mk1estimate2}
& m^{k_1}(x,t,k) - I = \frac{Y(x, \tau)m_1^X(q) Y(x, \tau)^{-1}}{z_{\star}\sqrt{x} (k-k_{1}) \hat{z}(\tau,k)} + O(C_N(\tau) x^{-1})
\end{align}
uniformly for $\tau \in \mathcal{I}$, where $m_1^X(q)$ is given by \eqref{mXasymptotics} with $q=-\tilde{r}(k_{1})^{-\frac{1}{2}}r_{2}(k_{1})$.
\end{lemma}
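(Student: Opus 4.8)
The plan is to read off every stated property of $m^{k_1}$ from the corresponding property of the model solution $m^X$, using that $k\mapsto z=z(x,\tau,k)$ is conformal on $D_\epsilon(k_1)$ (by the choice of $\epsilon$), that it maps each $\mathcal X_j^\epsilon$ into the $j$-th ray of the model cross, and that $Y=Y(x,\tau)$ does not depend on $k$. Analyticity of $m^{k_1}=Y m^X(q,z)Y^{-1}$ on $D_\epsilon(k_1)\setminus\mathcal X^\epsilon$ is then immediate from the analyticity of $m^X(q,\cdot)$ off its cross (Lemma~\ref{IIbis Xlemma 3}), and uniform boundedness follows from $|m^{k_1}|\le|Y|\,|m^X|\,|Y^{-1}|$ together with $|Y^{\pm1}|\le C$ (Lemma~\ref{lemma: bound on Y}) and the uniform boundedness of $m^X(q,\cdot)$ (Lemma~\ref{IIbis Xlemma 3}); the latter applies because $q=-\tilde r(k_1)^{-1/2}r_2(k_1)$ stays in a fixed bounded set. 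In fact $q$ is $C_N(\tau)$: since $|k_1-i|=O(\tau)$ and $r_2(k_1)=\tilde r(k_1)\overline{r_1(k_1)}$ (using $\bar k_1^{-1}=k_1$ for $|k_1|=1$), and $r_1$ vanishes to all orders at $i$, it follows that $r_2(k_1)$, hence $q$, vanishes to all orders as $\tau\to0$; the same is true of $r_1(k_1)$, $\hat r_1(k_1)$, $\hat r_2(k_1)$ and all their $k$-derivatives at $k_1$. This $C_N(\tau)$ bookkeeping is what ultimately produces the prefactors $C_N(\tau)$ throughout the lemma.

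Next I would identify the jump. Since $Y$ is analytic across $\mathcal X^\epsilon$, the jump of $m^{k_1}$ on $\mathcal X_j^\epsilon$ is $v^{k_1}=Y\tilde v^X_{\mathcal X_j^\epsilon}Y^{-1}$, whereas $\tilde n=n^{(4)}Y$ yields $\tilde v=Y^{-1}v^{(4)}Y$, i.e. $v^{(4)}=Y\tilde v Y^{-1}$ on the same contour. Comparing $\tilde v_4,\tilde v_7,\tilde v_9,\tilde v_6$ with $\tilde v^X_{\mathcal X_1^\epsilon},\dots,\tilde v^X_{\mathcal X_4^\epsilon}$ entrywise and using $-\bar q=\tilde r(k_1)^{1/2}r_1(k_1)$, $r_j(k_1)/(1+r_1(k_1)r_2(k_1))=\hat r_j(k_1)$, and $d_1(\tau,k_1)=1$, one finds that $\tilde v_j-\tilde v^X_{\mathcal X_j^\epsilon}$ has a single nonzero off-diagonal entry, equal to a bounded prefactor (built from $z_{(0)}^{\pm2i\nu}$, $e^{\pm iz^2/2}$, $\tilde r(k_1)^{\pm1/2}$) times one of the differences $\hat r_{1,a}(k)d_1(\tau,k)^{-1}-\hat r_1(k_1)$, $r_{1,a}(k)d_1(\tau,k)^{-1}-r_1(k_1)$, $r_{2,a}(k)d_1(\tau,k)-r_2(k_1)$, $\hat r_{2,a}(k)d_1(\tau,k)-\hat r_2(k_1)$. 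By the refined ($k_\star=k_1$) estimates of Lemma~\ref{decompositionlemma2}(b) (for $r_{2,a},\hat r_{2,a}$ via the symmetry relating them to $r_{1,a},\hat r_{1,a}$), and using that the Taylor coefficients at $k_1$ are $C_N(\tau)$, each of $r_{1,a},\hat r_{1,a},r_{2,a},\hat r_{2,a}$ differs from its value at $k_1$ by $\le C_N(\tau)|k-k_1|$ on $\mathcal X^\epsilon$; combined with $|d_1(\tau,k)^{\pm1}-1|\le C|k-k_1|(1+|\ln|k-k_1||)$ from \eqref{d1estimate}, each of the four differences is $\le C_N(\tau)|k-k_1|(1+|\ln|k-k_1||)$. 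The $Y$-conjugation only multiplies the relevant entry by $Y_{11}/Y_{22}$ or its reciprocal, of modulus $e^{-2\pi\nu}\tilde r(k_1)^{\mp1/2}$ by \eqref{d0estimate} and $\re\tilde\Phi_{21}(\tau,k_1)=0$, a bounded quantity.

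The \emph{main obstacle} is to convert this $O(|k-k_1|\ln|k-k_1|^{-1})$ near-$k_1$ bound into a negative power of $x$ with only a logarithmic loss, which is where the Gaussian decay enters. On each $\mathcal X_j^\epsilon$ one has $|e^{\pm iz^2/2}|\le e^{-cx|k-k_1|^2}$ for some $c>0$, because $x\,\re(\tilde\Phi_{21}(\tau,k)-\tilde\Phi_{21}(\tau,k_1))=-\tfrac12\re(iz^2)$ has a definite sign on the lens through $k_1$ and $\re\tilde\Phi_{21}$ vanishes quadratically at the saddle. Hence $|v^{(4)}-v^{k_1}|\le C_N(\tau)|k-k_1|(1+|\ln|k-k_1||)e^{-cx|k-k_1|^2}$ on $\mathcal X^\epsilon$; the maximum in $|k-k_1|$ is attained near $|k-k_1|\sim x^{-1/2}$ and gives the $L^\infty$ bound $C_N(\tau)x^{-1/2}\ln x$, while integrating along $\mathcal X^\epsilon$ after the substitution $|k-k_1|=u/\sqrt x$ gives the $L^1$ bound $C_N(\tau)x^{-1}\ln x$; this is \eqref{v4vk1estimate}.

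Finally, on $\partial D_\epsilon(k_1)$ we have $|k-k_1|=\epsilon$, hence $|z|=|z_\star|\sqrt x\,\epsilon\,|\hat z|\ge c\sqrt x$. Lemma~\ref{IIbis Xlemma 3} gives $m^X(q,z)=I+m_1^X(q)/z+O(|q||z|^{-2})$ uniformly as $z\to\infty$ with $|m_1^X(q)|\le C|q|$, so $\|m^X(q,\cdot)-I\|_{L^\infty(\partial D_\epsilon(k_1))}\le C_N(\tau)x^{-1/2}$ (using $|q|=C_N(\tau)$); conjugating by the bounded $Y$ gives \eqref{mk1estimate1}. And writing $m^{k_1}-I=Y(m^X(q,z)-I)Y^{-1}=Ym_1^X(q)Y^{-1}/z+O(C_N(\tau)x^{-1})$ on $\partial D_\epsilon(k_1)$ and substituting $z=z_\star\sqrt x(k-k_1)\hat z(\tau,k)$ (so that $1/z^2=O(x^{-1})$ there) yields \eqref{mk1estimate2}.
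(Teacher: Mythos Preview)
Your proof is correct and follows essentially the same approach as the paper. One bookkeeping point: the refined estimates of Lemma~\ref{decompositionlemma2}(b) give $|r_{j,a}(k)-r_j(k_1)|\le C_N(\tau)|k-k_1|\,e^{\frac{x}{4}|\re\tilde\Phi_{21}(\tau,k)|}$, not simply $C_N(\tau)|k-k_1|$ (recall that $r_{j,a}$ depends on $x$); this extra exponential is then absorbed by $|e^{\pm iz^2/2}|=e^{\mp x\,\re\tilde\Phi_{21}(\tau,k)}$, leaving $e^{-\frac{3x}{4}|\re\tilde\Phi_{21}(\tau,k)|}\le e^{-cx|k-k_1|^2}$, exactly as in the paper's argument.
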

\begin{proof}
By \eqref{ntildedef} and \eqref{mk1def}, we have
$v^{(4)} - v^{k_1} = Y (\tilde{v} - v^X)Y^{-1}$.
Thus, in view of Lemma \ref{lemma: bound on Y}, the estimates in (\ref{v4vk1estimate}) will follow if we can show that
\begin{subequations}\label{II v3k0}
\begin{align}\label{II v3k0L1}
  &\| \tilde{v}(x,t,\cdot) - v^{X}(x,t,z(x, \tau,\cdot)) \|_{L^1(\mathcal{X}_{j}^\epsilon)} \leq C_N(\tau) x^{-1} \ln x,
	\\ \label{II v3k0Linfty}
&  \| \tilde{v}(x,t,\cdot) - v^{X}(x,t,z(x, \tau,\cdot)) \|_{L^\infty(\mathcal{X}_{j}^\epsilon)} \leq C_N(\tau) x^{-1/2} \ln x,
\end{align}
\end{subequations}
for $j = 1, \dots, 4$. We show (\ref{II v3k0}) for $j = 4$; similar arguments apply when $j = 1,2,3$.

For $k \in \mathcal{X}_{4}^\epsilon$, only the $(21)$-entry of the matrix $\tilde{v} - v^{X}$ is nonzero. 
Note from Lemma \ref{decompositionlemma2} that $|\hat{r}_{2,a}(k)| \leq C_N(\tau)e^{\frac{x}{4}|\re \tilde{\Phi}_{21}(\tau, k)|}$ for all $k \in \mathcal{X}_{4}^\epsilon$ and $\tau \in \mathcal{I}$.
Using also the identities 
$$\frac{-q}{1+|q|^2} = \tilde{r}(k_{1})^{-\frac{1}{2}} \hat{r}_{2}(k_{1}), \qquad
\big| e^{-\frac{iz_{1}^2}{2}} \big| = e^{-\re (\frac{iz_{1}^2}{2})} = e^{-\frac{|z_{1}|^2}{2}},
$$
and Lemma \ref{decompositionlemma2}, we find, for $k \in \mathcal{X}_{4}^\epsilon$,
\begin{align*}
|(\tilde{v} - v^{X})_{21}| = &\; \bigg|\tilde{r}(k_{1})^{-\frac{1}{2}}\hat{r}_{2,a}(k)d_{1}z_{(0)}^{2i\nu}e^{-\frac{iz^{2}}{2}}
- \frac{-q}{1 + |q|^2}z_{(0)}^{2i\nu} e^{-\frac{iz^2}{2}}\bigg|
	\\
=& \;  \big| z_{(0)}^{2i \nu} \big| \Big|  (d_1 - 1) \tilde{r}(k_{1})^{-\frac{1}{2}}\hat{r}_{2,a}(k) +\tilde{r}(k_{1})^{-\frac{1}{2}}\big(\hat{r}_{2,a}(k)
- \hat{r}_{2}(k_1)\big)\Big| \big| e^{-\frac{iz^2}{2}} \big|
	\\
\leq &\; C \bigg(|d_1 - 1| |\hat{r}_{2,a}(k)| + |\hat{r}_{2,a}(k)
- \hat{r}_{2}(k_1) | \bigg)e^{-\frac{|z|^2}{2}}
	\\
\leq &\; C_N(\tau) \bigg(|d_1 - 1| + |k-k_1| \bigg) e^{\frac{x}{4}|\re \tilde{\Phi}_{21}(\tau, k)|} e^{-\frac{|z|^2}{2}}
	\\
\leq &\; C_N(\tau) \bigg(|d_1 - 1| + |k-k_1| \bigg)e^{-c x |k-k_1|^2}.
\end{align*}
Using (\ref{d1estimate}), this gives
\begin{align*}
|(\tilde{v} - v^{X})_{21}|  \leq C_N(\tau) |k- k_1|(1+ |\ln|k- k_1||)e^{-c x |k- k_1|^2}, \qquad k \in \mathcal{X}_{4}^\epsilon.
\end{align*}
Hence
\begin{align*}
& \|\tilde{v} - v^{X}\|_{L^1(\mathcal{X}_{4}^\epsilon)}
\leq C_N(\tau) \int_0^\infty u(1+ |\ln u|) e^{-c x u^2} du \leq C_N(\tau) x^{-1}\ln x,
	\\
& \|\tilde{v} - v^{X}\|_{L^{\infty}(\mathcal{X}_{4}^\epsilon)}
\leq C_N(\tau) \sup_{u \geq 0} u(1+ |\ln u|) e^{-c x u^2} \leq C_N(\tau)x^{-1/2}\ln x,
\end{align*}
which completes the proof of (\ref{II v3k0}), and hence also of (\ref{v4vk1estimate}).

For $k \in \partial D_\epsilon(k_1)$, we have $|z| = |z_{\star}| \sqrt{x} |k- k_{1}| \, | \hat{z}| \geq c \sqrt{x}$, so the variable $z$ tends to infinity as $x \to \infty$.
Thus (\ref{mXasymptotics}) can be used to infer that
\begin{align*}
& m^{X}(q, z(x, \tau, k)) = I + \frac{m_1^{X}(q)}{z_{\star}\sqrt{x} (k- k_1) \hat{z}(\tau,k)} + O(q x^{-1}),
\qquad  x \to \infty,
\end{align*}
uniformly for $k \in \partial D_\epsilon(k_1)$ and $\tau \in \mathcal{I}$. 
Recalling the definition (\ref{mk1def}) of $m^{k_1}$, this gives
\begin{align}\label{mk1minusI}
 & m^{k_1} - I = \frac{Y(x, \tau)m_1^{X}(q) Y(x, \tau)^{-1}}{z_{\star}\sqrt{x} (k- k_1) \hat{z}(\tau,k)} + O(q x^{-1}), \qquad  x \to \infty,
\end{align}
uniformly for $k \in \partial D_\epsilon(\omega k_4)$ and $\tau \in \mathcal{I}$. 
Since $r_2(k)$ vanishes to all orders at $k = i$ and $q = -\tilde{r}(k_{1})^{-\frac{1}{2}}r_{2}(k_{1})$, we see that $|q| = O(C_N(\tau))$.
Since $m_{1}^{X}(q) = O(q)$ as $q \to 0$, the estimates (\ref{mk1estimate1}) and (\ref{mk1estimate2}) are a consequence of (\ref{mk1minusI}).
\end{proof}

\section{The $n^{(4)}\to \hat{n}$ transformation}\label{section:small norm}

The symmetries
\begin{align*}
m^{k_1}(x,t,k) = \mathcal{A} m^{k_1}(x,t,\omega k)\mathcal{A}^{-1} = \mathcal{B} m^{k_1}(x,t,k^{-1}) \mathcal{B}
\end{align*}
allow us to extend the domain of definition of $m^{k_1}$ from $D_\epsilon(k_1)$ to $\mathcal{D}$, where $\mathcal{D} := D_\epsilon(k_1) \cup \omega D_\epsilon(k_1) \cup \omega^2 D_\epsilon(k_1) \cup D_\epsilon(k_1^{-1}) \cup \omega D_\epsilon(k_1^{-1}) \cup \omega^2 D_\epsilon(k_1^{-1})$. 
Shrinking $\tau_{\max}$ if necessary, we may assume that $i$ belongs to $D_\epsilon(k_1)$ for all $\tau \in \mathcal{I}$.
Our next goal is to prove that
\begin{align}\label{Sector I final transfo}
\hat{n} :=
\begin{cases}
n^{(4)} (m^{k_1})^{-1}, & k \in \mathcal{D}, \\
n^{(4)}, & \text{elsewhere},
\end{cases}
\end{align}
is close to $I$ for large $x$ and $\tau \in \mathcal{I}$. Let $\hat{\Gamma} = \Gamma^{(4)} \cup \partial \mathcal{D}$ be the contour shown in Figure \ref{fig:Gammahat}, and let us orient each circle that is part of $\partial \mathcal{D}$ in the clockwise direction. Define
\begin{align}\label{def of vhat II}
\hat{v}= \begin{cases}
v^{(4)}, & k \in \hat{\Gamma} \setminus \bar{\mathcal{D}},
	\\
m^{k_1}, & k \in \partial \mathcal{D},
	\\
m_-^{k_1} v^{(4)}(m_+^{k_1})^{-1}, & k \in \hat{\Gamma} \cap \mathcal{D}.
\end{cases}
\end{align}
We denote the set of self-intersection points of $\hat{\Gamma}$ by $\hat{\Gamma}_{\star}$. The function $\hat{n}$ satisfies the following RH problem: (a) $\hat{n}:\mathbb{C}\setminus \hat{\Gamma}\to \mathbb{C}^{1\times 3}$ is analytic, (b) $\hat{n}_{+} = \hat{n}_{-}\hat{v}$ for $k \in \hat{\Gamma}\setminus \hat{\Gamma}_{\star}$, (c) $\hat{n}(x,t,k) = (1,1,1)+O(k^{-1})$ as $k \to \infty$, and (d) $\hat{n}(x,t,k)=O(1)$ as $k\to k_{\star}\in \hat{\Gamma}_{\star}$.

\begin{figure}
\begin{center}
\begin{tikzpicture}[master,scale=1.1]
\node at (0,0) {};
\draw[black,line width=0.65 mm] (0,0)--(30:7.5);

\draw[black,line width=0.65 mm] (0,0)--(90:7.5);

\draw[black,line width=0.65 mm] (0,0)--(150:7.5);
\draw[dashed,black,line width=0.15 mm] (0,0)--(60:7.5);
\draw[dashed,black,line width=0.15 mm] (0,0)--(120:7.5);

\draw[black,line width=0.65 mm] ([shift=(30:3*1.5cm)]0,0) arc (30:150:3*1.5cm);

\draw[black,line width=0.65 mm] ([shift=(30:3*1.5cm)]0,0) arc (30:150:3*1.5cm);

\draw[black,line width=0.65 mm] (81:3.24)--($(96.9725:4.5)+(96.9725-138:1.1)$)--(96.9725:4.5)--($(96.9725:4.5)+(96.9725-45:1.1)$)--(84:6.25);

\draw[black,line width=0.65 mm] (120:3.24)--($(96.9725:3*1.5)+(96.9725+135:1.1)$)--(96.9725:3*1.5)--($(96.9725:3*1.5)+(96.9725+45:1.1)$)--(120:6.25);
\draw[black,line width=0.65 mm] (120:3.24)--($(143.028:3*1.5)+(143.028-135:1.1)$)--(143.028:3*1.5)--($(143.028:3*1.5)+(143.028-45:1.1)$)--(120:6.25);

\draw[black,line width=0.65 mm] (150:3.9)--(143.028:3*1.5)--(150:5.3);

\draw[black,line width=0.65 mm] (60:4.5)--(60:6.25);
\draw[black,line width=0.65 mm] (60:3.24)--(60:4.5);
\draw[black,line width=0.65 mm] (120:4.5)--(120:6.25);
\draw[black,line width=0.65 mm] (120:3.24)--(120:4.5);

\draw[black,line width=0.65 mm] (143.028:3.24)--(143.028:6.25);
\draw[black,line width=0.65 mm] (96.9725:3.24)--(96.9725:6.25);

\draw[black,line width=0.65 mm] ([shift=(30:3.24cm)]0,0) arc (30:150:3.24cm);
\draw[black,line width=0.65 mm] ([shift=(30:6.25cm)]0,0) arc (30:150:6.25cm);

\draw[green,fill] (143.028:3*1.5) circle (0.12cm);
\draw[blue,fill] (96.9725:3*1.5) circle (0.12cm);

\draw[black,line width=0.65 mm] (90:4.5)--(70:6.25)--(69.5:7.5);
\draw[black,line width=0.65 mm] (70:0)--(70:3.24)--(90:4.5);

\draw[black,line width=0.65 mm] (30:4.5)--(50:6.25)--(50.5:7.5);
\draw[black,line width=0.65 mm] (50:0)--(50:3.24)--(30:4.5);

\draw[black,line width=0.65 mm] (39:3.24)--($(30-6.9725:4.5)+(30-6.9725+138:1.1)$)--(30:4.05);
\draw[black,line width=0.65 mm] (30:5.16)--($(30-6.9725:4.5)+(30-6.9725+45:1.1)$)--(36:6.25);

\draw[green,fill] (143.028:3*1.5) circle (0.12cm);
\draw[blue,fill] (96.9725:3*1.5) circle (0.12cm);

\draw[black,line width=0.65 mm] ([shift=(-60:1.1cm)]143.028:3*1.5) arc (-60:180:1.1cm);
\draw[black,line width=0.65 mm] (96.9725:3*1.5) circle (1.1cm);

\draw[black,line width=0.65 mm] ([shift=(60:1.1cm)]30-6.9725:3*1.5) arc (60:180:1.1cm);

\end{tikzpicture}
\end{center}
\begin{figuretext}
\label{fig:Gammahat}
The contour $\hat{\Gamma}=\Gamma^{(4)}\cup \partial \mathcal{D}$ for $\arg k \in [\frac{\pi}{6},\frac{5\pi}{6}]$ (solid), the boundary of $\mathsf{S}$ (dashed), and the saddle points $k_{1}$ (blue) and $\omega^{2}k_{2}$ (green).
\end{figuretext}
\end{figure}

\begin{lemma}\label{whatlemma}
Let $\hat{w} = \hat{v}-I$. The following inequalities hold uniformly for $x \geq 2$ and $\tau \in \mathcal{I}$:
\begin{subequations}\label{hatwestimate}
\begin{align}\label{hatwestimate1}
& \| \hat{w}\|_{(L^1\cap L^\infty)(\hat\Gamma \setminus (\partial \mathcal{D} \cup \hat{\mathcal{X}}^\epsilon))} \leq Cx^{-N},
	\\\label{hatwestimate3}
& \| \hat{w}\|_{(L^1\cap L^{\infty})(\partial \mathcal{D})} \leq C_N(\tau) x^{-1/2},	\\\label{hatwestimate4}
& \| \hat{w}\|_{L^1(\hat{\mathcal{X}}^\epsilon)} \leq C_N(\tau) x^{-1}\ln x,
	\\\label{hatwestimate5}
& \| \hat{w}\|_{L^\infty(\hat{\mathcal{X}}^\epsilon)} \leq C_N(\tau) x^{-1/2}\ln x.
\end{align}
\end{subequations}
\end{lemma}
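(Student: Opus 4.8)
The plan is to bound $\hat{w} = \hat{v} - I$ piece by piece according to the three-way definition (\ref{def of vhat II}) of $\hat{v}$, i.e.\ separately on $\hat{\Gamma}\setminus\bar{\mathcal{D}}$, on $\partial\mathcal{D}$, and on $\hat{\Gamma}\cap\mathcal{D}$, and to reduce all estimates near the six saddle points to the single point $k_1$ via the symmetries (\ref{vjsymm}) of $v^{(4)}$ and the $\mathcal{A}$-- and $\mathcal{B}$--symmetries of $m^{k_1}$; since $\mathcal{A}$ and $\mathcal{B}$ are constant matrices, these symmetries transport $L^1$-- and $L^\infty$--bounds from a contour piece near $k_1$ to the matching pieces near $\omega k_1,\omega^2 k_1, k_1^{-1},\omega k_1^{-1},\omega^2 k_1^{-1}$ without loss. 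Throughout we take $\epsilon$ and $\tau_{\max}$ small enough that the six disks comprising $\mathcal{D}$ are mutually disjoint, that $z$ is conformal on $D_\epsilon(k_1)$, and that $i\in D_\epsilon(k_1)$ for all $\tau\in\mathcal{I}$, as has already been arranged.

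First, the estimate (\ref{hatwestimate1}). The set $\hat{\Gamma}\setminus(\partial\mathcal{D}\cup\hat{\mathcal{X}}^\epsilon)$ splits into a part lying outside $\bar{\mathcal{D}}$ and a part lying inside $\mathcal{D}$. On the former, $\hat{w} = v^{(4)} - I$ by the first line of (\ref{def of vhat II}). On the latter — the arcs of $\Gamma^{(4)}$ meeting $\mathcal{D}$ which are not part of $\hat{\mathcal{X}}^\epsilon$ (in particular the unit-circle arcs that now lie inside $\mathcal{D}$) — the extended parametrix $m^{k_1}$ is analytic, since its jumps lie on $\hat{\mathcal{X}}^\epsilon$ only; hence $m_-^{k_1}=m_+^{k_1}=m^{k_1}$ there, $\hat{v} = m^{k_1} v^{(4)} (m^{k_1})^{-1}$, and $\hat{w} = m^{k_1}(v^{(4)} - I)(m^{k_1})^{-1}$. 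In either case the whole set is contained in $\Gamma^{(4)}\setminus\hat{\mathcal{X}}^\epsilon$, so $\|v^{(4)} - I\|_{(L^1\cap L^\infty)}\leq Cx^{-N}$ by Lemma \ref{II v3lemma}; combined with the uniform boundedness of $(m^{k_1})^{\pm 1}$ from Lemma \ref{k0lemma}, this gives (\ref{hatwestimate1}).

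Next, on $\partial\mathcal{D}$ we have $\hat{w} = m^{k_1} - I$, so by (\ref{mk1estimate1}) of Lemma \ref{k0lemma} together with the symmetries, $\|m^{k_1} - I\|_{L^\infty(\partial\mathcal{D})}\leq C_N(\tau)x^{-1/2}$; since $\partial\mathcal{D}$ is a union of six circles of the fixed radius $\epsilon$, multiplying by the (bounded) total arclength yields the same bound in $L^1$, proving (\ref{hatwestimate3}). Finally, on $\hat{\mathcal{X}}^\epsilon$ we have $\hat{w} = m_-^{k_1} v^{(4)} (m_+^{k_1})^{-1} - I$; inserting the jump relation $m_+^{k_1} = m_-^{k_1} v^{k_1}$ of Lemma \ref{k0lemma} gives
\begin{align*}
\hat{w} = m_-^{k_1}\big(v^{(4)} - v^{k_1}\big)(v^{k_1})^{-1}(m_-^{k_1})^{-1}, \qquad k \in \hat{\mathcal{X}}^\epsilon .
\end{align*}
The matrices $(m_-^{k_1})^{\pm 1}$ are uniformly bounded by Lemma \ref{k0lemma}, and $(v^{k_1})^{-1}$ is a unipotent triangular matrix whose single off-diagonal entry is uniformly bounded on $\hat{\mathcal{X}}^\epsilon$: the scalar factor $q$ is bounded, the factors $z_{(0)}^{\pm 2i\nu}$ are bounded because $\nu\in\R$ is bounded and $\arg_0 z$ ranges over the bounded interval $(0,2\pi)$, and the factors $e^{\pm\frac{iz^2}{2}}$ are bounded because the rays $\mathcal{X}_j^\epsilon$ are chosen along the steepest-descent directions of $\tilde{\Phi}_{21}$ at $k_1$. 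Hence $\|\hat{w}\|_{L^p(\hat{\mathcal{X}}^\epsilon)}\leq C\|v^{(4)} - v^{k_1}\|_{L^p(\hat{\mathcal{X}}^\epsilon)}$ for $p\in\{1,\infty\}$, and (\ref{hatwestimate4}) and (\ref{hatwestimate5}) follow from (\ref{v4vk1estimate}) of Lemma \ref{k0lemma} on $\mathcal{X}^\epsilon$, propagated to the other five crosses by symmetry.

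The substantial analytic work — the local parametrix near $k_1$, the decomposition lemmas, and the estimates (\ref{v4vk1estimate}) — is already in place, so this lemma is essentially a matter of matching the case distinctions in (\ref{def of vhat II}) with the available inputs. The step I expect to demand the most attention is the bookkeeping around $\hat{\Gamma}\cap\mathcal{D}$: one must confirm that, for $\epsilon$ and $\tau_{\max}$ small, the only parts of $\hat{\Gamma}\cap\mathcal{D}$ outside $\hat{\mathcal{X}}^\epsilon$ are curves across which $m^{k_1}$ is analytic (so the conjugation identity used for (\ref{hatwestimate1}) is legitimate), and — closely related — the uniform boundedness of $(v^{k_1})^{\pm 1}$ on $\hat{\mathcal{X}}^\epsilon$, which is the one place where the steepest-descent geometry of the crosses and the reality of $\nu$ genuinely enter.
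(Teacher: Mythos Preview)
Your proof is correct and follows essentially the same approach as the paper's (very terse) proof: reduce each of the three regions in (\ref{def of vhat II}) to Lemma \ref{II v3lemma} and Lemma \ref{k0lemma}, using the uniform boundedness of $m^{k_1}$ for the pieces inside $\mathcal{D}$. One small simplification: on $\hat{\mathcal{X}}^\epsilon$ you may write $\hat w = m_-^{k_1}(v^{(4)}-v^{k_1})(m_+^{k_1})^{-1}$ directly (using $m_-^{k_1}v^{k_1}(m_+^{k_1})^{-1}=I$), which appeals only to the boundedness of $(m_\pm^{k_1})^{\pm 1}$ already recorded in Lemma \ref{k0lemma} and spares you the separate discussion of $(v^{k_1})^{-1}$.
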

\begin{proof}
Note that $\hat{\Gamma} \setminus (\partial \mathcal{D} \cup \hat{\mathcal{X}}^\epsilon) = \Gamma^{(4)}\setminus \hat{\mathcal{X}}^\epsilon$. Thus \eqref{hatwestimate1} is a consequence of \eqref{def of vhat II}, Lemma \ref{II v3lemma}, and the uniform boundedness of $m^{k_{1}}$. The estimate \eqref{hatwestimate3} follows from \eqref{def of vhat II} and \eqref{mk1estimate1}. The inequalities \eqref{hatwestimate4} and \eqref{hatwestimate5} are direct consequences of \eqref{def of vhat II}, \eqref{v4vk1estimate}, and the uniform boundedness of $m^{k_1}$.
\end{proof}
The Cauchy transform $\hat{\mathcal{C}}h$ of a function $h$ defined on $\hat{\Gamma}$ is defined by
\begin{align*}
(\hat{\mathcal{C}}h)(z) = \frac{1}{2\pi i} \int_{\hat{\Gamma}} \frac{h(z')dz'}{z' - z}, \qquad z \in \C \setminus \hat{\Gamma}.
\end{align*}
The inequalities in Lemma \ref{whatlemma} imply that
\begin{align}\label{hatwLinfty}
\begin{cases}
\|\hat{w}\|_{L^1(\hat{\Gamma})}\leq Cx^{-N} + C_N(\tau) x^{-1/2},
	\\
\|\hat{w}\|_{L^2(\hat{\Gamma})}\leq 
Cx^{-N} + C_N(\tau) x^{-1/2}(\ln x)^{1/2},
	\\
\|\hat{w}\|_{L^\infty(\hat{\Gamma})}\leq Cx^{-N} + C_N(\tau) x^{-1/2}\ln x,
\end{cases}	 \qquad x \geq 2, \ \tau \in \mathcal{I}.
\end{align}
In particular, we have $\hat{w} \in L^{2}(\hat{\Gamma}) \cap L^{\infty}(\hat{\Gamma})$, so that the operator $\hat{\mathcal{C}}_{\hat{w}}=\hat{\mathcal{C}}_{\hat{w}(x,t,\cdot)}: L^{2}(\hat{\Gamma})+L^{\infty}(\hat{\Gamma}) \to L^{2}(\hat{\Gamma})$, $h \mapsto \hat{\mathcal{C}}_{\hat{w}}h := \hat{\mathcal{C}}_{-}(h \hat{w})$ is well-defined. We denote the space of bounded linear operators on $L^{2}(\hat{\Gamma})$ by $\mathcal{B}(L^{2}(\hat{\Gamma}))$. Using \eqref{hatwLinfty} we infer that there exists an $X > 0$ such that $I - \hat{\mathcal{C}}_{\hat{w}(x, t, \cdot)} \in \mathcal{B}(L^{2}(\hat{\Gamma}))$ is invertible for all $x \geq X$ and $\tau \in \mathcal{I}$. Hence $\hat{n}$ satisfies a small-norm RH problem and can be expressed as
\begin{align}\label{IIbis hatmrepresentation}
\hat{n}(x, t, k) = (1,1,1) + \hat{\mathcal{C}}(\hat{\mu}\hat{w}) = (1,1,1) + \frac{1}{2\pi i}\int_{\hat{\Gamma}} \hat{\mu}(x, t, s) \hat{w}(x, t, s) \frac{ds}{s - k}
\end{align}
for $x \geq X$ and $\tau \in \mathcal{I}$, where 
\begin{align}\label{IIbis hatmudef}
\hat{\mu} = (1,1,1) + (I - \hat{\mathcal{C}}_{\hat{w}})^{-1}\hat{\mathcal{C}}_{\hat{w}}(1,1,1) \in (1,1,1) + L^{2}(\hat{\Gamma}).
\end{align}
Furthermore, using \eqref{hatwLinfty}, \eqref{IIbis hatmudef}, and the fact that $\|\hat{\mathcal{C}}_{-}\|_{\mathcal{B}(L^{2}(\hat{\Gamma}))}< \infty$, we find 
\begin{align}\label{estimate on mu}
& \|\hat{\mu} - (1,1,1)\|_{L^2(\hat{\Gamma})} \leq  \frac{C\|\hat{w}\|_{L^2(\hat{\Gamma})}}{1 - \|\hat{\mathcal{C}}_{\hat{w}}\|_{\mathcal{B}(L^2(\hat{\Gamma}))}}
\leq  Cx^{-N} + C_N(\tau) x^{-1/2}(\ln x)^{1/2}
\end{align}
for all $\tau \in \mathcal{I}$ and all large enough $x$. To determine the long-time asymptotics of $\hat{n}$, define $\hat{n}^{(1)}$ as the nontangential limit
\begin{align*}
& \hat{n}^{(1)}(x,t):=\ntlim_{k\to \infty} k(\hat{n}(x,t,k) - (1,1,1))
= - \frac{1}{2\pi i}\int_{\hat{\Gamma}} \hat{\mu}(x,t,k) \hat{w}(x,t,k) dk.
\end{align*}
\begin{lemma}\label{lemma: hatmplp asymp sector I}
As $x \to \infty$, 
\begin{align}\label{limlhatm}
& \hat{n}^{(1)}(x,t) = -\frac{(1,1,1)}{2\pi i}\int_{\partial \mathcal{D}} \hat{w}(x,t,k) dk + O\big(x^{-N} + C_N(\tau) x^{-1}\ln x\big).
\end{align}
\end{lemma}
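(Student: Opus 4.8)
The plan is to start from the exact small-norm representation established just above the statement,
\begin{align*}
\hat{n}^{(1)}(x,t) = - \frac{1}{2\pi i}\int_{\hat{\Gamma}} \hat{\mu}(x,t,k)\, \hat{w}(x,t,k)\, dk,
\end{align*}
and to split it according to the decomposition $\hat{\mu} = (1,1,1) + (\hat{\mu} - (1,1,1))$. This writes $\hat{n}^{(1)}$ as a ``leading'' piece with $\hat\mu$ replaced by $(1,1,1)$ plus a ``correction'' piece involving $\hat\mu - (1,1,1)$, and the goal is to show that the correction is of the claimed order and that the leading piece reduces, modulo the same order, to the integral over $\partial\mathcal{D}$.

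First I would bound the correction term by Cauchy--Schwarz,
\begin{align*}
\Bigl| \int_{\hat\Gamma} \bigl(\hat\mu - (1,1,1)\bigr)\hat w\, dk \Bigr| \le \|\hat\mu - (1,1,1)\|_{L^2(\hat\Gamma)}\,\|\hat w\|_{L^2(\hat\Gamma)},
\end{align*}
and then combine the estimate \eqref{estimate on mu} with the $L^2$-bound in \eqref{hatwLinfty} to get
\begin{align*}
\|\hat\mu - (1,1,1)\|_{L^2(\hat\Gamma)}\,\|\hat w\|_{L^2(\hat\Gamma)} \le \bigl(Cx^{-N} + C_N(\tau)\,x^{-1/2}(\ln x)^{1/2}\bigr)^2.
\end{align*}
Expanding the square and using that $x^{-2N}\le x^{-N}$, that $x^{-N-1/2}(\ln x)^{1/2}\le x^{-N}$ for large $x$ (since $N\ge 1$), and that a product or square of functions vanishing to all orders at $\tau = 0$ is again such a function, this correction is $O\bigl(x^{-N} + C_N(\tau)\,x^{-1}\ln x\bigr)$, i.e. it is absorbed into the error term in \eqref{limlhatm}.

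Next I would treat the leading piece $-\tfrac{(1,1,1)}{2\pi i}\int_{\hat\Gamma}\hat w\, dk$ by splitting the contour as $\hat\Gamma = \bigl(\hat\Gamma\setminus(\partial\mathcal D\cup\hat{\mathcal X}^\epsilon)\bigr)\cup\hat{\mathcal X}^\epsilon\cup\partial\mathcal D$. On the first piece, \eqref{hatwestimate1} gives $\|\hat w\|_{L^1}\le Cx^{-N}$, so its contribution is $O(x^{-N})$; on $\hat{\mathcal X}^\epsilon$, \eqref{hatwestimate4} gives $\|\hat w\|_{L^1}\le C_N(\tau)\,x^{-1}\ln x$, so its contribution is $O\bigl(C_N(\tau)\,x^{-1}\ln x\bigr)$; and the integral over $\partial\mathcal D$ is exactly the main term $-\tfrac{(1,1,1)}{2\pi i}\int_{\partial\mathcal D}\hat w\, dk$ appearing in \eqref{limlhatm}. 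Adding the three contributions together with the bound on the correction term proves \eqref{limlhatm}.

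The argument is routine once Lemma \ref{whatlemma} and \eqref{estimate on mu} are available; the only point that requires a bit of care is the bookkeeping of error terms in the correction estimate — in particular verifying that the cross term $x^{-N-1/2}(\ln x)^{1/2}$ and the squared prefactor $C_N(\tau)^2$ are both dominated by $x^{-N} + C_N(\tau)\,x^{-1}\ln x$. I do not expect any genuine analytic obstacle beyond this.
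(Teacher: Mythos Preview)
Your proof is correct and follows essentially the same approach as the paper: split $\hat\mu=(1,1,1)+(\hat\mu-(1,1,1))$, bound the correction via Cauchy--Schwarz using \eqref{estimate on mu} and \eqref{hatwLinfty}, and control the leading integral off $\partial\mathcal D$ using the $L^1$ estimates of Lemma~\ref{whatlemma}. The only cosmetic difference is that you split $\hat\Gamma\setminus\partial\mathcal D$ into two pieces rather than one, which is harmless.
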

\begin{proof}
It suffices to write
\begin{align*}
\hat{n}^{(1)}(x,t) = & -\frac{(1,1,1)}{2\pi i}\int_{\partial \mathcal{D}} \hat{w}(x,t,k) dk  -\frac{(1,1,1)}{2\pi i}\int_{\hat{\Gamma}\setminus\partial \mathcal{D}} \hat{w}(x,t,k) dk
	\\
& -\frac{1}{2\pi i}\int_{\hat{\Gamma}} (\hat{\mu}(x,t,k)-(1,1,1))\hat{w}(x,t,k) dk,
\end{align*}
and then to use \eqref{estimate on mu} and Lemma \ref{whatlemma}.
\end{proof}
Let $\{F^{(l)}\}_{l \in \mathbb{Z}}$ be given by
\begin{align*}
F^{(l)}(x, \tau) = & - \frac{1}{2\pi i} \int_{\partial D_\epsilon(k_1)} k^{l-1}\hat{w}(x,t,k) dk	
= - \frac{1}{2\pi i}\int_{\partial D_\epsilon(k_1)}k^{l-1}(m^{k_1} - I) dk.
\end{align*}
Using \eqref{mk1estimate2} and $\hat{z}(\tau,k_{1})=1$, and recalling that $\partial D_\epsilon(k_{1})$ is oriented clockwise, we get
\begin{align}
F^{(l)}(x, \tau) &  =  k_{1}^{l-1}\frac{Y(x, \tau) m_1^{X}(q) Y(x, \tau)^{-1}}{z_{\star}\sqrt{x}} + O(C_N(\tau) x^{-1}) =  -ik_{1}^{l}Z(x, \tau) + O(C_N(\tau) x^{-1}) \label{asymptotics for Fl}
\end{align}
as $x \to \infty$ uniformly for $\tau \in \mathcal{I}$, where
\begin{align*}
Z(x, \tau) & = \frac{Y(x, \tau) m_1^{X} Y(x, \tau)^{-1}}{-ik_{1}z_{\star}\sqrt{x}} = 
\frac{1}{-ik_{1}z_{\star}\sqrt{x}} \begin{pmatrix}
0 & \hspace{-0.15cm}\frac{\beta_{12}}{d_{0} e^{x\tilde{\Phi}_{21}(\tau,k_{1})} \tilde{r}(k_{1})^{\frac{1}{2}}} & \hspace{-0.15cm}0 \\
d_{0} e^{x\tilde{\Phi}_{21}(\tau,k_{1})} \tilde{r}(k_{1})^{\frac{1}{2}} \beta_{21} & \hspace{-0.15cm}0 & \hspace{-0.15cm}0 \\
0 & \hspace{-0.15cm}0 & \hspace{-0.15cm}0
\end{pmatrix}\hspace{-0.07cm}.
\end{align*}

\begin{lemma}\label{lemma: some integrals by symmetry}
For $l \in \mathbb{Z}$ and $j=0,1,2$, we have
\begin{align}
& -\frac{1}{2\pi i}\int_{\omega^{j} \partial D_\epsilon(k_1)} k^{l}\hat{w}(x,t,k)dk = \omega^{j(l+1)} \mathcal{A}^{-j}F^{(l+1)}(x, \tau)\mathcal{A}^{j}, \label{int1} \\
& -\frac{1}{2\pi i}\int_{\omega^{j} \partial D_\epsilon(k_1^{-1})} k^{l}\hat{w}(x,t,k)dk = -\omega^{j(l+1)}\mathcal{A}^{-j}\mathcal{B} F^{(-l-1)}(x, \tau) \mathcal{B}\mathcal{A}^{j}. \label{int1 tilde}
\end{align}
\end{lemma}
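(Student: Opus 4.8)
The plan is to reduce both integrals to contour integrals over $\partial D_\epsilon(k_1)$ by elementary changes of variables, using that $\hat{w} = m^{k_1} - I$ on $\partial \mathcal{D}$ by \eqref{def of vhat II}, together with the $\mathcal{A}$- and $\mathcal{B}$-symmetries of $m^{k_1}$. Recall that the extension of $m^{k_1}$ to $\mathcal{D}$ obeys $m^{k_1}(x,t,\omega^{j}k) = \mathcal{A}^{-j} m^{k_1}(x,t,k)\mathcal{A}^{j}$ and $m^{k_1}(x,t,k^{-1}) = \mathcal{B} m^{k_1}(x,t,k)\mathcal{B}$, that each circle of $\partial \mathcal{D}$ is oriented clockwise (as is $\partial D_\epsilon(k_1)$ in the definition of $F^{(l)}$), and that $m^{k_1}-I$ is analytic and bounded on $D_\epsilon(k_1)\setminus \mathcal{X}^\epsilon$ by Lemma~\ref{k0lemma}, so the loop integrals involved are insensitive to deformations of the contour within the domain of analyticity that keep the enclosed singular set fixed.

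For \eqref{int1} I would substitute $k = \omega^{j}\zeta$ with $\zeta \in \partial D_\epsilon(k_1)$. Multiplication by $\omega^{j}$ is a rotation, so it carries $\partial D_\epsilon(k_1)$ (clockwise) onto $\omega^{j}\partial D_\epsilon(k_1)$ (clockwise) and no orientation sign is produced; using $dk = \omega^{j}\,d\zeta$, $k^{l} = \omega^{jl}\zeta^{l}$, and $m^{k_1}(x,t,\omega^{j}\zeta) - I = \mathcal{A}^{-j}\bigl(m^{k_1}(x,t,\zeta)-I\bigr)\mathcal{A}^{j}$ one gets
\begin{align*}
-\frac{1}{2\pi i}\int_{\omega^{j}\partial D_\epsilon(k_1)} k^{l}\hat{w}(x,t,k)\,dk
= \omega^{j(l+1)}\mathcal{A}^{-j}\Bigl(-\frac{1}{2\pi i}\int_{\partial D_\epsilon(k_1)}\zeta^{l}\bigl(m^{k_1}(x,t,\zeta)-I\bigr)\,d\zeta\Bigr)\mathcal{A}^{j},
\end{align*}
and the inner integral is precisely $F^{(l+1)}(x,\tau)$ by the definition of $F^{(l)}$.

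For \eqref{int1 tilde} I would proceed in two steps. First substitute $k = \omega^{j}\eta$ with $\eta \in \partial D_\epsilon(k_1^{-1})$ (again a rotation, orientation preserved), which produces a prefactor $-\omega^{j(l+1)}\mathcal{A}^{-j}(\,\cdot\,)\mathcal{A}^{j}$ wrapped around $\frac{1}{2\pi i}\int_{\partial D_\epsilon(k_1^{-1})}\eta^{l}\bigl(m^{k_1}(x,t,\eta)-I\bigr)\,d\eta$. Then substitute $\eta = \zeta^{-1}$ with $\zeta \in \partial D_\epsilon(k_1)$: the inversion $\zeta\mapsto\zeta^{-1}$ is conformal, hence orientation-preserving, so it carries the clockwise circle $\partial D_\epsilon(k_1)$ onto the clockwise loop $\partial D_\epsilon(k_1^{-1})$, the Jacobian is $d\eta = -\zeta^{-2}\,d\zeta$, so $\eta^{l}\,d\eta = -\zeta^{(-l-1)-1}\,d\zeta$, and $m^{k_1}(x,t,\zeta^{-1}) - I = \mathcal{B}\bigl(m^{k_1}(x,t,\zeta)-I\bigr)\mathcal{B}$; collecting the sign from the Jacobian with the shift of the exponent of $\zeta$, the inner integral equals $\mathcal{B}F^{(-l-1)}(x,\tau)\mathcal{B}$, which gives the right-hand side of \eqref{int1 tilde}.

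The computations are routine. The only point needing care is the orientation bookkeeping through the inversion $\zeta\mapsto\zeta^{-1}$ and the matching of the Jacobian sign with the shift of the power of $\zeta$, together with the harmless observation that $\partial D_\epsilon(k_1^{-1})$ is (via the definition of $\mathcal{D}$, or else by Cauchy's theorem) the image of $\partial D_\epsilon(k_1)$ under inversion, so that the change of variables leaves the value of the integral unchanged. I do not anticipate any genuine obstacle.
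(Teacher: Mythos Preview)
Your proposal is correct and follows essentially the same approach as the paper: both arguments reduce the integrals to the defining integral for $F^{(l)}$ via the substitutions $k\mapsto \omega^{j}k$ and $k\mapsto k^{-1}$, using the symmetries $\hat{w}(x,t,k)=\mathcal{A}\hat{w}(x,t,\omega k)\mathcal{A}^{-1}=\mathcal{B}\hat{w}(x,t,k^{-1})\mathcal{B}$ on $\partial\mathcal{D}$. Your orientation and Jacobian bookkeeping is accurate, and your remark that the image of $\partial D_\epsilon(k_1)$ under inversion may be identified with $\partial D_\epsilon(k_1^{-1})$ (either by the symmetry-based definition of $\mathcal{D}$ or by a harmless contour deformation in the domain of analyticity of $m^{k_1}$) correctly addresses the only nontrivial point.
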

\begin{proof}
These identities follow from $\hat{w}(x, t, k) = \mathcal{A} \hat{w}(x, t, \omega k) \mathcal{A}^{-1} = \mathcal{B} \hat{w}(x, t, k^{-1}) \mathcal{B}$ which hold for $k \in \partial \mathcal{D}$. A more detailed argument in a similar situation can be found in \cite[Proof of Lemma 12.4]{CLsectorIV}.
\end{proof}

It follows from Lemma \ref{lemma: some integrals by symmetry} that
\begin{align*}
&  \frac{-1}{2\pi i}\int_{\partial \mathcal{D}} \hat{w}(x,t,k) dk =  \sum_{j=0}^{2}  \frac{-1}{2\pi i}\int_{\omega^{j} \partial D_\epsilon(k_1)} \hat{w}(x,t,k) dk - \sum_{j=0}^{2}  \frac{1}{2\pi i}\int_{\omega^{j} \partial D_\epsilon(k_1^{-1})} \hat{w}(x,t,k) dk \\
& = \sum_{j=0}^{2} \omega^{j} \mathcal{A}^{-j}F^{(1)}(x, \tau)\mathcal{A}^{j} - \sum_{j=0}^{2} \omega^{j}\mathcal{A}^{-j}\mathcal{B} F^{(-1)}(x, \tau) \mathcal{B}\mathcal{A}^{j}.
\end{align*}
Hence, \eqref{limlhatm} and \eqref{asymptotics for Fl} show that $\hat{n}^{(1)}=(1,1,1)\hat{m}^{(1)}$, where
\begin{align}\nonumber
& \hat{m}^{(1)}(x,t) =  \; \sum_{j=0}^{2} \omega^{j} \mathcal{A}^{-j}F^{(1)}(x, \tau)\mathcal{A}^{j} - \sum_{j=0}^{2} \omega^{j}\mathcal{A}^{-j}\mathcal{B} F^{(-1)}(x, \tau) \mathcal{B}\mathcal{A}^{j} + O\big(x^{-N} + C_N(\tau) x^{-1}\ln x\big) \\
& = -ik_{1}\sum_{j=0}^{2} \omega^{j} \mathcal{A}^{-j}Z(x, \tau)\mathcal{A}^{j} + ik_{1}^{-1} \sum_{j=0}^{2} \omega^{j}\mathcal{A}^{-j}\mathcal{B} Z(x, \tau) \mathcal{B}\mathcal{A}^{j} + O\big(x^{-N} + C_N(\tau) x^{-1}\ln x\big) \label{mhatplp asymptotics}
\end{align}
as $x \to \infty$, uniformly for $\tau \in \mathcal{I}$. 

\section{Asymptotics of $u$}\label{uasymptoticssec}

By \eqref{recoveruvn}, we have $u(x,t) = -i\sqrt{3}\frac{\partial}{\partial x}n_{3}^{(1)}(x,t)$, where $n_{3}(x,t,k) = 1+n_{3}^{(1)}(x,t)k^{-1}+O(k^{-2})$ as $k \to \infty$. By inverting the transformations $n \mapsto n^{(1)}\mapsto n^{(2)}\mapsto n^{(3)}\mapsto n^{(4)}\mapsto \hat{n}$ using (\ref{n1def}), \eqref{n2def}, \eqref{n3def}, \eqref{IIbis def of mp3p}, and \eqref{Sector I final transfo}, we obtain
\begin{align*}
n = \hat{n}\Delta^{-1}(G^{(3)})^{-1}(G^{(2)})^{-1}(G^{(1)})^{-1}, \qquad k \in \mathbb{C}\setminus (\hat{\Gamma}\cup\bar{\mathcal{D}}),
\end{align*}
where $G^{(1)}$, $G^{(2)}$, $G^{(3)}$, $\Delta$ are given by (\ref{G1def}), \eqref{G2def}, \eqref{G3def}, and \eqref{Deltadef}, respectively. Thus
\begin{align}\nonumber
u(x,t) & = -i\sqrt{3}\frac{\partial}{\partial x}\bigg( \hat{n}_{3}^{(1)}(x,t) + \ntlim_{k\to \infty} k (\Delta_{33}(\tau,k)^{-1}-1) \bigg),
\end{align}
where $\hat{n}_{3}^{(1)}$ is defined via the expansion $\hat{n}_{3}(x,t,k) = 1+\hat{n}_{3}^{(1)}(x,t)k^{-1}+O(k^{-2})$ as $k \to \infty$. 
By virtue of (\ref{Delta33asymptotics}) and the change of variables formula
\begin{align}\label{ddxddtau}
\frac{\partial}{\partial x}\bigg|_{\text{$t$ const}} = \frac{\partial}{\partial x}\bigg|_{\text{$\tau$ const}} - \frac{\tau}{x} \frac{\partial}{\partial \tau}\bigg|_{\text{$x$ const}},
\end{align}
we obtain
\begin{align}\label{partialxDelta33}
\frac{\partial}{\partial x} \ntlim_{k\to \infty} k (\Delta_{33}(\tau,k)^{-1}-1) 
=  \frac{\tau}{x} \frac{\sqrt{3}}{2\pi} \frac{d}{d \tau}\int_{i}^{k_1(\tau)} \ln(1 + r_1(s)r_{2}(s)) (1 + s^{-2}) ds.
\end{align}
Since $r_1(s)r_{2}(s)$ vanishes to all orders at $s = i$ and $k_1 = ie^{i\tau + O(\tau^{3})}$ as $\tau \to 0$, the right-hand side of (\ref{partialxDelta33}) is $O(C_N(\tau) x^{-1})$. Hence
\begin{align}\label{IIbis recoverun}
u(x,t) = -i\sqrt{3}\frac{\partial}{\partial x} \hat{n}_{3}^{(1)}(x,t) + O(C_N(\tau) x^{-1}), \qquad x \to \infty,
\end{align}
uniformly for $\tau \in \mathcal{I}$.
By \eqref{mhatplp asymptotics}, we have
\begin{align*}
 \hat{n}_{3}^{(1)} = &\; Z_{12} \big(\omega i k_{1}^{-1} - \omega^{2} i k_{1} \big) + Z_{21} \big( \omega^{2} i k_{1}^{-1} - \omega i k_{1} \big) + O\big(x^{-N} + C_N(\tau) x^{-1}\ln x\big)
	 \\
 = &\; \frac{\beta_{12}(\omega i k_{1}^{-1}-\omega^{2} i k_{1})}{-i k_{1} z_{\star} \sqrt{x} d_{0}e^{x\tilde{\Phi}_{21}(\tau,k_{1})}\tilde{r}(k_{1})^{\frac{1}{2}}} + \frac{d_{0}e^{x\tilde{\Phi}_{21}(\tau,k_{1})}\tilde{r}(k_{1})^{\frac{1}{2}}\beta_{21}(\omega^{2} i k_{1}^{-1}-\omega i k_{1})}{-ik_{1}z_{\star} \sqrt{x}} 
	\\
& + O\big(x^{-N} + C_N(\tau) x^{-1}\ln x\big)
\end{align*}
as $x \to \infty$ uniformly for $\tau \in \mathcal{I}$. The identity $|d_0(x, \tau)| = e^{2\pi \nu}$ established in Lemma \ref{lemma: bound on Y} implies that $\frac{\beta_{12}}{d_{0}} = -\bar{d_{0}} \bar{\beta}_{21}$. Therefore, using also that $(\omega i k_{1}^{-1}-\omega^{2} i k_{1})/\tilde{r}(k_{1})^{\frac{1}{2}} = \tilde{r}(k_{1})^{\frac{1}{2}}(\omega^{2} i k_{1}^{-1}-\omega i k_{1}) \in \R$ and $-ik_{1}z_{\star}>0$,
\begin{align*}
\hat{n}_{3}^{(1)} = 2 i \, \im \frac{\beta_{12}(\omega i k_{1}^{-1}-\omega^{2} i k_{1})}{-i k_{1} z_{\star} \sqrt{x} d_{0}e^{x\tilde{\Phi}_{21}(\tau,k_{1})}\tilde{r}(k_{1})^{\frac{1}{2}}} + O\big(x^{-N} + C_N(\tau) x^{-1}\ln x\big) \qquad \mbox{as } x \to \infty.
\end{align*}
Since
\begin{align*}
& \beta_{12} = \frac{\sqrt{2\pi}e^{\frac{\pi i}{4}}e^{\frac{3\pi \nu}{2}}}{q \Gamma(i\nu)}, \qquad d_{0}(x, \tau) = e^{2\pi \nu}e^{i\arg d_{0}(x, \tau)}, \qquad |q| = \sqrt{e^{-2\pi \nu}-1}, \\
& |\Gamma(i\nu)| = \frac{\sqrt{2\pi}}{\sqrt{-\nu}\sqrt{e^{-\pi \nu}-e^{\pi \nu}}} = \frac{\sqrt{2\pi}}{\sqrt{-\nu}e^{\frac{\pi\nu}{2}}|q|}, \quad \frac{\omega i k_{1}^{-1}-\omega^{2} i k_{1}}{\tilde{r}(k_{1})^{\frac{1}{2}}} = -\sqrt{-1-2\cos (2\arg k_{1})}, \\
& 
\end{align*}
we find
\begin{align*}
\hat{n}_{3}^{(1)} \hspace{-0.07cm} =  & -2 i \frac{\sqrt{-\nu}\sqrt{-1-2\cos (2\arg k_{1})}}{-ik_{1}z_{\star}\sqrt{x}}  \sin \Big( \frac{\pi}{4}-\arg q - \arg \Gamma(i \nu) - \arg d_{0} - x \im \tilde{\Phi}_{21}(\tau,k_{1}) \hspace{-0.07cm} \Big) \\
& + O\big(x^{-N} + C_N(\tau) x^{-1}\ln x\big) \qquad \mbox{as } x \to \infty,
\end{align*}
uniformly for $\tau \in \mathcal{I}$. As in \cite{CLWasymptotics}, we can prove that the above asymptotics can be differentiated with respect to $x$ without increasing the error term. Hence, by substituting the above formula into \eqref{IIbis recoverun} and using (\ref{ddxddtau}) and the relation $\im \big(\tilde{\Phi}_{21}(\tau, k_1) - \tau \frac{d}{d\tau}\tilde{\Phi}_{21}(\tau, k_{1}(\tau))\big)=\im k_{1}$, we arrive at
\begin{align*}
 u  = & \; -2\sqrt{3} \bigg(\frac{\partial}{\partial x} - \frac{\tau}{x} \frac{\partial}{\partial \tau} \bigg)\bigg( \sin \Big( \frac{\pi}{4}-\arg q - \arg \Gamma(i \nu) - \arg d_{0} - x \im \tilde{\Phi}_{21}(\tau,k_{1}) \Big) 
 	\\
& \times \frac{\sqrt{-\nu}\sqrt{-1-2\cos (2\arg k_{1})}}{-ik_{1}z_{\star}\sqrt{x}} \bigg) + O\big(x^{-N} + C_N(\tau) x^{-1}\ln x\big) \\
= &\; \frac{A(\tau)}{\sqrt{x}} \cos \Big( \frac{\pi}{4}-\arg q - \arg \Gamma(i \nu) - \arg d_{0} - x \im \tilde{\Phi}_{21}(\tau,k_{1}) \Big) + O\big(x^{-N} + C_N(\tau) x^{-1}\ln x\big)
\end{align*}
as $x \to \infty$ uniformly for $\tau \in \mathcal{I}$, where $A(\tau)$ is as in the statement of Theorem \ref{asymptoticsth}.
Since $\arg q = \pi+\arg r_{2}(k_{1}) \pmod{2\pi}$ and $\cos(-x)=\cos(x)$, we can write this as
\begin{align*}
 u(x,t)  = &\; \frac{A(\tau)}{\sqrt{x}} \cos \Big( \frac{3\pi}{4}+\arg r_{2}(k_{1}) + \arg \Gamma(i \nu) + \arg d_{0} + x \im \tilde{\Phi}_{21}(\tau,k_{1}) \Big) 
 	\\
& + O\big(x^{-N} + C_N(\tau) x^{-1}\ln x\big)
\end{align*}
as $x \to \infty$ uniformly for $\tau \in \mathcal{I}$. The expression \eqref{def of nu beta} for $\arg d_{0}(x, \tau)$ follows from a long but direct computation using \eqref{d0def}, \eqref{delta expression in terms of log and chi}, and \eqref{def of chi}. This finishes the proof of Theorem \ref{asymptoticsth}.

\appendix 

\section{Model RH problem}\label{appendix A}

The lemma presented in this appendix is needed for the local parametrix near $k_{1}$. The proof involves parabolic cylinder functions as in \cite{I1981}, and we refer to \cite[Appendix A]{CLsectorV} for a constructive proof of a similar lemma.

Let $X = X_1 \cup \cdots \cup X_4 \subset \C$ be the cross consisting of the four rays
\begin{align} \nonumber
&X_1 = \bigl\{se^{\frac{i\pi}{4}}\, \big| \, 0 \leq s < \infty\bigr\}, && 
X_2 = \bigl\{se^{\frac{3i\pi}{4}}\, \big| \, 0 \leq s < \infty\bigr\},  
	\\ \label{Xdef}
&X_3 = \bigl\{se^{-\frac{3i\pi}{4}}\, \big| \, 0 \leq s < \infty\bigr\}, && 
X_4 = \bigl\{se^{-\frac{i\pi}{4}}\, \big| \, 0 \leq s < \infty\bigr\},
\end{align}
all oriented away from the origin.

\begin{lemma}[Model RH problem needed near $k=k_{1}$]\label{IIbis Xlemma 3}
Let $q \in \mathbb{C}$, and define
\begin{align*}
\nu = -\tfrac{1}{2\pi} \ln(1 + |q|^2) \leq 0.
\end{align*}
Define the jump matrix $v^{X}(z)$ for $z \in X$ by
\begin{align}\nonumber 
& \begin{pmatrix} 
1 & \frac{-\bar{q}}{1 + |q|^2} z_{(0)}^{-2i\nu}e^{\frac{iz^2}{2}} & 0 \\
0	& 1 & 0 \\
0 & 0 & 1
\end{pmatrix} \mbox{ if } z \in X_{1}, & & 
 \begin{pmatrix} 
1	& 0 & 0	\\
q z_{(0)}^{2i\nu}e^{-\frac{iz^2}{2}}	& 1 & 0 \\
0 & 0 & 1
\end{pmatrix} \mbox{ if } z \in X_{2}, 
	\\ \label{vXdef}
& \begin{pmatrix} 
1 & \bar{q} z_{(0)}^{-2i\nu} e^{\frac{iz^2}{2}} & 0 \\
0 & 1 & 0 \\
0 & 0 & 1
\end{pmatrix} \mbox{ if } z \in X_{3}, & & \begin{pmatrix} 
1 & 0 & 0 \\
\frac{-q}{1 + |q|^2}z_{(0)}^{2i\nu} e^{-\frac{iz^2}{2}} & 1 & 0 \\
0 & 0 & 1 \end{pmatrix} \mbox{ if } z \in X_{4},
\end{align}
where $z_{(0)}^{i\nu}$ has a branch cut along $[0,+\infty)$, such that $z_{(0)}^{i\nu} = |z|^{i\nu}e^{-\nu  \arg_{0}(z)}$, $\arg_{0}(z) \in (0,2\pi)$. Then the RH problem 
\begin{enumerate}[$(a)$]
\item $m^{X}(q, \cdot) = m^{X}(q, \cdot) : \C \setminus X \to \mathbb{C}^{3 \times 3}$ is analytic;

\item on $X \setminus \{0\}$, the boundary values of $m^{X}(q, \cdot)$ exist, are continuous, and satisfy $m_+^{X} =  m_-^{X} v^{X}$;

\item $m^{X}(q,z) = I + O(z^{-1})$ as $z \to \infty$, and $m^{X}(q,z) = O(1)$ as $z \to 0$;
\end{enumerate}
has a unique solution $m^{X}(q,z)$. This solution satisfies
\begin{align}\label{mXasymptotics}
m^{X}(q,z) = I + \frac{m_{1}^{X}(q)}{z} + O\biggl(\frac{q}{z^2}\biggr), \quad z \to \infty,  \quad m_{1}^{X}(q) :=\begin{pmatrix} 
0 & \beta_{12} & 0 \\ 
\beta_{21} & 0 & 0 \\
0 & 0 & 0 \end{pmatrix},
\end{align}  
where the error term is uniform for $\arg z \in [-\pi, \pi]$ and $q$ in compact subsets of $\mathbb{C}$, and
\begin{align}\label{betaXdef}
& \beta_{12} := \frac{\sqrt{2\pi}e^{\frac{\pi i}{4}}e^{\frac{3\pi \nu}{2}}}{q \Gamma(i\nu)}, \qquad \beta_{21} := \frac{\sqrt{2\pi}e^{-\frac{\pi i}{4}}e^{-\frac{5\pi \nu}{2}}}{-\bar{q} \Gamma(-i\nu)}.
\end{align}
\end{lemma}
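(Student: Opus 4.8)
The plan is to reduce the statement to the classical $2\times 2$ parabolic‑cylinder model RH problem, which is solved explicitly in terms of parabolic cylinder functions in \cite{I1981} (see also \cite[Appendix A]{CLsectorV}). First I would observe that every jump matrix in \eqref{vXdef} is block triangular, of the form $\left(\begin{smallmatrix} \ast & \ast & 0 \\ \ast & \ast & 0 \\ 0 & 0 & 1 \end{smallmatrix}\right)$, with the upper‑left $2\times 2$ block unipotent, hence of determinant one. Let $A(q,z)$ denote the unique solution of the $2\times 2$ RH problem on the cross $X$ whose jumps are the corresponding $2\times 2$ blocks of \eqref{vXdef} and which is normalized by $A = I + O(z^{-1})$ as $z\to\infty$ and $A = O(1)$ as $z\to 0$; the existence of $A$, together with its large‑$z$ asymptotics, is exactly the content of the construction in \cite{I1981}. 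Embedding $A$ into the upper‑left block and putting $1$ in the $(3,3)$ slot and $0$ in the remaining off‑block entries produces a function $m^{X}(q,z)$ which, by the block‑triangular structure of $v^{X}$, satisfies $(a)$--$(c)$.

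For uniqueness I would argue directly: since $\det v^{X}\equiv 1$, the function $\det m^{X}$ is entire, tends to $1$ at infinity, and hence equals $1$; thus $m^{X}$ is everywhere invertible with $(m^{X})^{-1}$ bounded near $z=0$. If $\tilde m$ is any other solution, then $m^{X}(\tilde m)^{-1}$ has no jump across $X$, is bounded near $0$, and tends to $I$ at infinity, so it equals $I$ by Liouville's theorem. This settles existence and uniqueness of $m^{X}$.

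It then remains to record the large‑$z$ expansion \eqref{mXasymptotics}--\eqref{betaXdef}, which amounts to the corresponding statement for the $2\times 2$ block $A$. Here I would follow the standard route of \cite{I1981}: conjugating $A$ by $z_{(0)}^{-i\nu\sigma_3}e^{\frac{iz^{2}}{4}\sigma_3}$, with $\sigma_3 = \mathrm{diag}(1,-1)$, turns the jumps on the rays of $X$, as well as the jump of $z_{(0)}^{i\nu}$ across $[0,\infty)$, into $z$‑independent matrices and produces a function $\Psi$ that solves a $z$‑linear ODE $\partial_z\Psi = \big(-\tfrac{iz}{2}\sigma_3 + B\big)\Psi$ with $B$ off‑diagonal and constant, i.e. Weber's equation after decoupling. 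Consequently the entries of $\Psi$ are parabolic cylinder functions $D_{i\nu}(\pm e^{\pm 3\pi i/4}z)$; matching these across the rays of $X$ via the connection formula for $D_a$ fixes $B$, and the $z\to\infty$ asymptotics of $D_a$ then yield $m^{X} = I + m_1^{X}/z + O(z^{-2})$ with $m_1^{X}$ off‑diagonal, the constants $\beta_{12},\beta_{21}$ coming out as in \eqref{betaXdef} after invoking the reflection formula for the Gamma function and the identity $1+|q|^{2} = e^{-2\pi\nu}$. Uniformity of the error term for $\arg z \in [-\pi,\pi]$ and for $q$ in compact subsets of $\mathbb{C}$ follows from the uniformity of the parabolic cylinder asymptotics, the rays being slightly rotatable off the anti‑Stokes directions if needed.

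The main obstacle I anticipate is purely computational bookkeeping: carrying the branch conventions for $z_{(0)}^{i\nu}$ (cut along $[0,\infty)$, $\arg_{0}\in(0,2\pi)$) consistently through the conjugation, the ODE matching, and the $D_a$‑asymptotics, so that $\beta_{12}$ and $\beta_{21}$ emerge with exactly the exponential prefactors $e^{3\pi\nu/2}$, $e^{-5\pi\nu/2}$ and the powers of $e^{\pm i\pi/4}$ displayed in \eqref{betaXdef}. Since this is identical in structure to computations already in the literature, I would carry it out by direct comparison with \cite{I1981} and \cite[Appendix A]{CLsectorV}, verifying only that the present jump data agree with theirs after the stated conjugation.
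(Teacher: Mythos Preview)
Your proposal is correct and follows the same approach as the paper, which does not give a detailed proof but simply notes that the construction uses parabolic cylinder functions as in \cite{I1981} and refers to \cite[Appendix~A]{CLsectorV} for a constructive proof of a similar lemma. Your reduction to the $2\times 2$ block, Liouville uniqueness argument, and outline of the parabolic cylinder computation are exactly what those references carry out.
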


\begin{remark}
Since $|\Gamma(i\nu)| = \frac{\sqrt{2\pi}}{\sqrt{-\nu}\sqrt{e^{-\pi\nu}-e^{\pi\nu}}}=\frac{\sqrt{2\pi}}{\sqrt{-\nu}e^{\frac{\pi\nu}{2}}|q|}$, it follows that $\beta_{12}\beta_{21} = \nu$ and that $m_{1}^{X}(q) = O(q)$ as $q \to 0$.
\end{remark}

%
%
%

\subsection*{Acknowledgements}
Support is acknowledged from the Novo Nordisk Fonden Project, Grant 0064428, the European Research Council, Grant Agreement No. 682537, the Swedish Research Council, Grant No. 2015-05430, Grant No. 2021-04626, and Grant No. 2021-03877, the G\"oran Gustafsson Foundation, and the Ruth and Nils-Erik Stenb\"ack Foundation.

\bibliographystyle{plain}
\bibliography{is}

\end{document}